\documentclass[]{article}
\usepackage[a4paper,left=25mm,right=30mm,top=30mm,bottom=35mm]{geometry}
\usepackage{hyperref}
\usepackage{titlesec}
\usepackage{graphicx}
\usepackage{nomencl}
\makenomenclature
\usepackage{amsmath}%
\allowdisplaybreaks
\usepackage{color}
\usepackage{amsthm}
\usepackage{amsfonts}%
\usepackage{amssymb}%
\usepackage{graphicx}

\usepackage{paralist}
\usepackage[onehalfspacing]{setspace}
\usepackage{tikz}
\usepackage{color}
\usepackage{esint}
\usepackage{lmodern}

\theoremstyle{plain}

\newtheorem{definition}{Definition}[section]

\newtheorem{lemma}[definition]{Lemma}

\newtheorem{proposition}[definition]{Proposition}
\newtheorem{remark}[definition]{Remark}

\newtheorem{theorem}[definition]{Theorem}
\newtheorem{corollary}[definition]{Corollary}

\normalsize

\newcommand{\gast}{\nabla_{\ast}}

\newcommand{\gasta}[1]{\nabla_{\Gamma_{\ast}^{#1}}}

\newcommand{\Dast}{\Delta_{\ast}}
\newcommand{\Dasta}[1]{\Delta_{\Gamma_{\ast}^{#1}}}

\newcommand{\partialnuasta}[1]{\partial_{\nu_{\ast}^{#1}}}
\newcommand{\nuast}{\nu_{\ast}}

\newcommand{\intasti}{\int_{\Gammaast^i}}
\newcommand{\intsigmaast}{\int_{\Sigma_{\ast}}}
\newcommand{\brho}{\boldsymbol{\rho}}
\newcommand{\bmu}{\boldsymbol{\mu}}

\newcommand{\bu}{\boldsymbol{u}}

\DeclareMathOperator{\Id}{Id}
\DeclareMathOperator{\divergenz}{div}

\newcommand{\Area}{\text{Area}}
\newcommand{\Vol}{\text{Vol}}
\newcommand{\dH}{d\mathcal{H}}
\newcommand{\R}{\mathbb{R}}

\newcommand{\N}{\mathbb{N}}
\newcommand{\C}{\mathbb{C}}

\newcommand{\pr}{\text{pr}}
\newcommand{\supp}{\text{supp}}

\newcommand{\Gammaast}{\Gamma_{\ast}}

\newcommand{\GammaastT}{\Gamma_{\ast,T}}
\newcommand{\Sigmaast}{\Sigma_{\ast}}

\newcommand{\SigmaastT}{\Sigma_{\ast,T}}

\newcommand{\Gasti}{\Gamma_{\ast}^i}
\newcommand{\Nast}{N_{\ast}}
\newcommand{\tauast}{\tau_{\ast}}

\newcommand{\grhonull}{\gamma(\boldsymbol{\rho}_0)}

\definecolor{light-gray}{rgb}{0.23, 0.27, 0.29}

\title{Non-linear Stability of Double Bubbles under Surface Diffusion}
\date{}
\author{H. Garcke\footnote{University of Regensburg, 93040 Regensburg,
		harald.garcke@mathematik.uni-regensburg.de}, M. G\"o\ss{}wein\footnote{University of Regensburg, 93040 Regensburg,  michael.goesswein@mathematik.uni-regensburg.de}}
\setlength{\parindent}{0em} 
\begin{document}
	\setlength{\parindent}{0em} 
\maketitle
{\bf Keywords:} Surface diffusion flow, triple junctions, stability analysis, \L ojasiewicz-Simon gradient inequality\\
{\bf 2010  \textit{Mathematics Subject Classification} 35K55, 53C44, 35R35, 35K93.}
\begin{abstract}
We consider the evolution of triple junction clusters driven by the surface diffusion flow. On the triple line we use the boundary conditions derived by Garcke and Novick-Cohen as the singular limit of a Cahn-Hilliard equation with degenerated mobility. These conditions are the concurrency of the triple junction, angle conditions between the hypersurfaces, continuity of the chemical potentials and a flux-balance. For this system we show stability of its energy minimizers, i.e., standard double bubbles. The main argument relies on a \L ojasiewicz-Simon gradient inequality. The proof of it differs from others works due to the fully non-linear boundary conditions and problems with the (non-local) tangential part.
\end{abstract}

\section{Introduction}

\noindent
Motion by surface diffusion flow was firstly proposed by Mullins \cite{mullins1957theory} to describe the development of thermal grooves at grain boundaries of heated polycrystals. This process depends mainly on two physical effects, which are evaporation and surface diffusion, i.e., molecular motion on the surface of heated, solid substances. In the case that surface diffusion is the more involved process Mullins derived that the profile of the surface $\Gamma$ evolves due to
\begin{align}\label{EquationMotionDuetoSurfaceDiffusion}
V_{\Gamma}=-\Delta_{\Gamma}H_{\Gamma}.
\end{align} 
Hereby, $V_{\Gamma}$ denotes the normal velocity, $\Delta_{\Gamma}$ the Laplace-Beltrami operator and $H_{\Gamma}$ the mean curvature operator. For closed hypersurfaces a lot of research was done in the end of the last century. Cahn and Taylor identified the surface diffusion flow as formal $\mathcal{H}^{-1}$-gradient flow of the surface energy, cf. \cite{taylor1994linking}. The evolution is also connected to the Cahn-Hilliard equations as Cahn, Elliott and Novick-Cohen derived via formal asymptotics that the surface diffusion flow is its singular limit, cf. \cite{cahn1996cahn}. In the case of closed curves, short time existence was proven by Elliott and Garcke in \cite{elliottgarcke1994existence}. The result was generalized to closed hypersurfaces of arbitrary dimensions by Escher, Mayer and Simonett, cf. \cite{eschermayersimonnett1998surface}. Both works also show a stability result for spheres. Unlike other geometric flows, e.g., the mean curvature flow, it is very typical for the surface flow to have a loss of convexity and to develop singularities during the evolution, cf. \cite{gigaito1997pinching}, \cite{gigaito1999loss}. Chou studied in \cite{chou2003blopupcriterionclosedcurves} the behavior at singularities for planar closed curves evolving due to surface diffusion flow and derived a blow-up rate for the curvature energy in case of a singularity in finite time. The result was generalized to higher codimensions by Dziuk, Kuvert and Sch\"atzle, cf. \cite{dzuikkuvertschatzle2002Evolutionofelasticurves}. Wheeler proved in \cite{wheeler2012SurfaceDiffusionFlowNearSpheres} a long time existence result for initial data close to spheres. In contrast to the results in \cite{elliottgarcke1994existence} and \cite{eschermayersimonnett1998surface} closeness is hereby not measured in terms of the height function but by smallness of a curvature-like energy. Wheeler also gave criteria for the minimal existence time in \cite{wheeler2011lifespantheoremforsimpleconstrained}. As a general introduction to curvature driven flows and in particular for short time existence results and results on
the evolution of geometric quantities we refer to the book of Mantegazza \cite{mantegazzalecturesonmeancurvatureflow}. \\
In this article we are interested in the evolution of triple junction clusters under the surface diffusion flow. A triple junction cluster consists of a set of hypersurfaces such that each connected component of the boundary of a hypersurface is also boundary of two other hypersurfaces. We will restrict to the case of three connected hypersurfaces $\Gamma^1, \Gamma^2, \Gamma^3$ in $\R^n$ meeting in one triple junction $\Sigma$. The result transfers to more general situations. Additionally, we need that the $\Gamma^i$ are embedded, oriented, compact and do not intersect with each other. We will consider evolutions of such objects with respect to \eqref{EquationMotionDuetoSurfaceDiffusion} such that at every time $t$ we have a decomposition
\begin{align*}
\Gamma(t):=\Gamma^1(t)\cup \Gamma^2(t)\cup\Gamma^3(t)\cup \Sigma(t)
\end{align*} 
into the three hypersurfaces and the triple junction, such that the following boundary conditions are fulfilled:
\begin{align}
\partial\Gamma^1(t)=\partial\Gamma^2(t)&=\partial\Gamma^3(t)=\Sigma(t), \tag{CC}\\
\angle(\nu_{\Gamma^i(t)},\nu_{\Gamma^j(t)})&=\theta^k,\quad (i,j,k)\in \{(1,2,3),(2,3,1),(3,1,2)\},\tag{AC} \\
\gamma^1H_{\Gamma^1(t)}+\gamma^2H_{\Gamma^2(t)}+\gamma^3H_{\Gamma^3(t)}&=0, \tag{CCP}\\
\nabla_{\Gamma^1(t)}H_{\Gamma^1(t)}\cdot\nu_{\Gamma^1(t)}=\nabla_{\Gamma^2(t)}H_{\Gamma^2(t)}\cdot\nu_{\Gamma^2(t)}&=\nabla_{\Gamma^3(t)}H_{\Gamma^3(t)}\cdot\nu_{\Gamma^3(t)}. \tag{FB}
\end{align}
Here, $\nu_{\Gamma^i(t)}$ denotes the outer conormal of $\Gamma^i(t)$, $\gamma^1,\gamma^2,\gamma^3$ are constants determining the energy density on the hypersurfaces $\Gamma^i(t)$ and $\theta^1,\theta^2,\theta^3\in(0,\pi)$ are given angles, that are related to the $\gamma^i$. Indeed, the condition (AC) is equivalent to Young's law
\begin{align}
\frac{\sin(\theta^1)}{\gamma^1}=\frac{\sin(\theta^2)}{\gamma^2}=\frac{\sin(\theta^3)}{\gamma^3}.
\end{align}
The above problem has been introduced by Garcke and Novick-Cohen, cf. \cite{garcke2000singular}, and the condition (CCP) results from continuity of the chemical potentials at the triple junction and (FB) is equivalent to the flux balances. (CC) gives the concurrency of the triple junction during the flow.
\begin{figure}[h]\label{OneOfManyFigures}
	\centering
	\includegraphics[height=6cm]{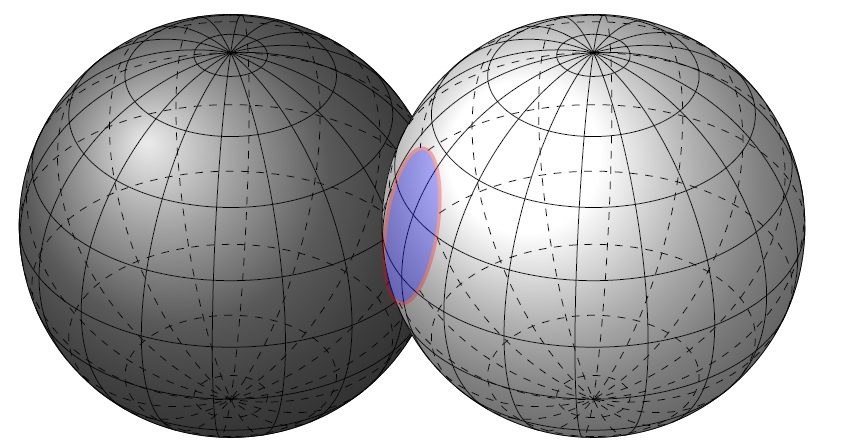}
	\caption{The picture shows the considered kind of triple junction cluster. In total, there are three hypersurfaces. In this illustration these are the two spherical caps and the flat blue area. The red line marks the triple junction, which is the boundary of all three hypersurfaces. Note that if the two enclosed volumes are unequal the blue surfaces will normally bend into direction of the larger volume.
	The shown geometry is the famous solution of the double-bubble conjecture, that is the geometric object with two fixed enclosed volumes and least surface area.}
\end{figure}\\
In this paper, we will show stability of standard double bubbles, see Figure \ref{OneOfManyFigures}, with respect to this evolution law. As it was shown in \cite{HutchingsMorganProofDoubleBubbleConjecture} these are precisely the minimizers of the surface area for two given enclosed volumes.  Roughly, we will prove the following.
\begin{theorem}[Stability of standard double bubbles]\ \\ 
	Let $\Gammaast\subset\R^n$ be a standard double bubble. Then $\Gammaast$ is Lyapunov stable in the following sense. There is for every $\varepsilon>0$ a $\delta>0$ such that for all triple junction clusters $\Gamma_0$ of class $C^{4+\alpha}$, which have a distance of at most $\delta$ from $\Gammaast$, the solution to the evolution with respect to surface diffusion flow \eqref{EquationMotionDuetoSurfaceDiffusion}, the boundary conditions (CC), (AC), (CCP), (FB), and initial data $\Gamma(0)=\Gamma_0$ exists globally in time and has at every time a maximum distance $\varepsilon$ from $\Gammaast$. Additionally, the solution converges to a (possibly different) standard double bubbles.
\end{theorem}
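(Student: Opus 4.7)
\section*{Proof Plan}

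The plan is to implement the now-standard \L ojasiewicz-Simon strategy for parabolic geometric flows (in the spirit of Simon, Chill, Feehan-Maridakis, and the first author's previous work on closed hypersurfaces and curve networks), the two new technical features being the fully non-linear boundary conditions at the triple line and the non-local tangential part of the parametrization, which is needed to enforce the concurrency condition (CC).

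\textbf{Step 1 (Reduction to a quasilinear system).} First I would parametrize triple junction clusters $\Gamma$ close to $\Gammaast$ over $\Gammaast$ by a height vector $\brho=(\rho^1,\rho^2,\rho^3)$, where on each piece $\Gasti$ away from $\Sigmaast$ the function $\rho^i$ is the normal height, while in a collar of $\Sigmaast$ an additional tangential component is introduced so that the resulting surfaces $\Gamma_{\brho}^i$ still share a common boundary. This parametrization converts \eqref{EquationMotionDuetoSurfaceDiffusion} together with (CC), (AC), (CCP), (FB) into a quasilinear fourth order parabolic system for $\brho$ with fully non-linear boundary conditions. I would assume the short time existence result from the companion analysis (in parabolic H\"older spaces $C^{4+\alpha,1+\alpha/4}$) and record that the energy $E(\brho)=\sum_i\gamma^i\mathcal H^n(\Gamma_{\brho}^i)$ is a Lyapunov functional whose dissipation is controlled by the $H^{-1}$-norm of the curvature pieces.

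\textbf{Step 2 (\L ojasiewicz-Simon inequality).} Since $\Gammaast$ minimizes $E$ under the two volume constraints, the first variation of the constrained functional $E-\sum_k\lambda_k V_k$ vanishes at $\brho=0$. I would show that $E$ is real-analytic in $\brho$ on a neighborhood in the appropriate H\"older space, compute its second variation, and check that the associated linearization (together with the linearized boundary conditions) is a self-adjoint Fredholm operator of index zero whose finite dimensional kernel consists of the infinitesimal rigid motions. An abstract \L ojasiewicz-Simon theorem (e.g.\ the Feehan-Maridakis or Chill-Haraux-Jendoubi versions) then yields constants $C>0$, $\theta\in(0,\tfrac12]$ and a neighborhood $U$ of $\Gammaast$ such that
\begin{align*}
|E(\Gamma)-E(\Gammaast)|^{1-\theta}\le C\,\|M(\Gamma)\|_{-1},
\end{align*}
where $M(\Gamma)$ denotes the chemical-potential tuple $(\gamma^i H_{\Gamma^i})_i$ modulo the Young-law admissible constants dictated by (CCP).

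\textbf{Step 3 (Main obstacle).} The hard part is precisely the step that the authors flag in the abstract: to set up a functional-analytic framework in which the linearization is Fredholm despite the nonlinear Young-type boundary conditions and the non-locality introduced by the tangential component near $\Sigmaast$. The difficulty is that the natural variational structure only sees the normal velocity, while well-posedness of the PDE forces us to work with the full vector $\brho$. I would therefore formulate the \L ojasiewicz-Simon inequality for a reduced energy obtained by solving away the tangential part via a non-local implicit function argument, showing that the reduction is analytic and that the tangential component is controlled by the normal one; this is where most of the work goes.

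\textbf{Step 4 (Global existence and convergence).} With the \L ojasiewicz-Simon inequality in hand I would run the classical continuation argument: as long as $\brho(t)$ remains in $U$, the energy identity $\tfrac{d}{dt}E(\Gamma(t))\le -c\|V_{\Gamma(t)}\|_{-1}^2$ combined with the inequality yields
\begin{align*}
-\frac{d}{dt}\bigl(E(\Gamma(t))-E(\Gammaast)\bigr)^{\theta}\ge c\,\|V_{\Gamma(t)}\|_{-1},
\end{align*}
so that the trajectory has finite length in the $H^{-1}$-norm, and by parabolic smoothing also in $C^{4+\alpha}$. Choosing $\delta$ small enough ensures that $\brho(t)$ never leaves a prescribed $\varepsilon$-neighborhood of $0$, which gives global existence together with convergence of $\Gamma(t)$ to a limit cluster $\Gamma_\infty$; since $\Gamma_\infty$ is a stationary point of the flow with the correct angles and a constant Young-admissible mean curvature tuple, it is again a standard double bubble, concluding the theorem.
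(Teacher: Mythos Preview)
Your overall architecture is correct and matches the paper's, but Step~3 contains a genuine gap that is exactly the point the paper stresses. You propose to ``solve away the tangential part via a non-local implicit function argument'' and then show this reduction is analytic. The paper argues explicitly (Section~\ref{SectionTechnicalAspects}) that this cannot work: with the non-local tangential part $\bmu_{DGK}(\brho)(\sigma)=\mathcal T\brho(\pr_{\Sigmaast}(\sigma))$, the first variation of the energy contains an integral of the form $\sum_i\int_{\Gasti}\mu^i(\brho)\,g(\overline\brho)\,\dH^n$, and because $\bmu_{DGK}(\brho)$ vanishes identically whenever $\brho|_{\Sigmaast}=0$, this functional cannot be represented as $\brho\mapsto\sum_i\int_{\Gasti}\rho^i\,\overline g(\overline\brho)\,\dH^n$ for any $\overline g$. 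Hence the map $\widetilde E'$ cannot be realized as a map into the required space $W$, and the Feehan--Maridakis scheme does not apply. The paper's fix is not to reduce but to \emph{replace} the non-local tangential part by a purely local one $\bmu_G(\brho):=(\mathcal T(\brho\circ\boldsymbol\varphi))\circ\boldsymbol\varphi^{-1}$, built by identifying the three sheets via global parametrizations over a common ball, and then to prove (Lemma~\ref{LemmaEuivalenceTangeitalPart}) that near $\Gammaast$ the two tangential parts describe the same triple junction clusters up to a $C^k$-diffeomorphism.

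A second point you do not address is the choice of function spaces in higher dimensions. Your Step~2 tacitly works with the $H^{-1}$-gradient flow scales ($W=H^1$, $V=H^3$), but for $n\ge 3$ the space $H^3$ is neither a Banach algebra nor embedded in $C^2$, so analyticity of $\brho\mapsto H_{\brho}$ etc.\ fails. The paper discusses why the obvious repair $V=W^{3,p}$ for large $p$ also fails (after interpolating back to $L^2$ the \L ojasiewicz exponent exceeds $1$), and instead takes $V=H^m$ with $m>2+n/2$ and $W=H^{m-2}\times(H^{m-3/2}(\Sigmaast))^2$; the boundary trace factors are how the non-linear angle conditions are carried, rather than by parametrizing them over their linearization as you suggest. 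The geometric LSI with right-hand side $\|\nabla_\Gamma H_\Gamma\|_{L^2}$ is then recovered a posteriori by interpolating between $L^2$ and $H^{k'}$ using the parabolic smoothing bounds of Section~\ref{SectionParabolicSmoothing}. Once these two issues are fixed, your Step~4 is essentially the argument of Section~\ref{SectionStabilityAnalysis}.
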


Hereby, the distance between triple junction clusters refers to the norm of the height function from the analytic framework we use to track the evolution. We will explain this in detail in Subsection \ref{SubsectionShortTimeExistence}. It is reasonable to identify this norm as a distance due the results from \cite{prusssimmonett2012manifoldofclosedhypersurfaces} for closed hypersurfaces.  
Our work is a generalization of the work of Abels, Arab and Garcke \cite{abelsarabgarcke2015standarddoublebubblearestableundersurfacediffusionflow}, where the authors showed the same result in the case of planar double bubbles. Their proof is based on a modification of the generalized principle of linearized stability. The main difficulty in its application lies in the fact that one needs an manifold structure for the set of critical points near the minimizer. In higher space or codimensions this is very difficult to verify and thus a lot of recent works use an approach with a so called \L ojasiewicz-Simon gradient inequality. With a standard argumentation, see, e.g., \cite[Section 4]{chillfavsangovaschatzle2009willmoreflowsarenevercompact}, this inequality can be used to do the desired stability analysis for problems with a gradient flow structure. To verify the inequality itself one can use a result from Chill \cite{Chill2003OnTheLojasiewiczSimon} that can be applied in a very general setting. For a concrete application the reformulation of \cite[Theorem 2]{FeehannMaridakisLojasiewiczSimon} Feehan and Meridakis is more comfortable as one only needs to check analyticity of the first and a Fredholm property of the second derivative. Due to this simplicity we consider this method a very powerful tool and want to give some examples on its applications. Abels and Wilke proved in \cite{abelswilke2007Convergencetoequilibiriumfor} stability of equilibria of the Cahn-Hilliard equations with a logarithmic free energy. In \cite{chillfavsangovaschatzle2009willmoreflowsarenevercompact}, Chill, Fasangova and Sch\"atzle considered evolution of closed hypersurfaces in higher codimensions due to the Willmore flow. Related to this is the work of Dall'Acqua, Pozzi and Spener \cite{dallacquapozzispener2016lojasiewichsimongradientclampedcurves}, where the authors considered clamped curves in higher codimensions driven by the Willmore flow. Lengeler studied in \cite{lengeler2018AsymptoticstabilityHelfrichminimizers} stability of local Helfrich minimizers under a Stokes type flow. In our situation some additional technical difficulties appear that were - to our best knowledge - not discussed yet. Firstly, the mentioned works restrict to surfaces of dimensions 1 or 2. In a higher dimensional situation one has to adapt the function spaces and cannot anymore work with the setting natural for the involved gradient flow structure. The reason for this is due to the fact that Sobolev spaces loose regularity properties when raising the space dimensions. Secondly, we have to discuss how to include the non-linear boundary conditions. And finally it turns out that the non-local tangential part induces term that cannot be dealt with. Thus, we have to replace it by a local version.\\
The article is organized as follows. In Section \ref{SectionPrelim} we will sum up the notation we use and then give an overview of important results for parabolic H\"older and Sobolev spaces on manifolds, especially taking the triple junction cluster into account. Additionally, we will state the results for the short time existence of the problem proven in a previous work. In Section \ref{SectionTechnicalAspects} we will give a general idea how to get a framework to prove the \L ojasiewicz-Simon gradient inequality. Afterwards, we will verify in Section \ref{SectionParabolicSmoothing} that the flow admits parabolic smoothing, at least if the initial data are close to standard double bubbles. In Section\ref{SectionTangentialPart} we will introduce a local tangential part and prove that this new tangential part describes the same set of triple junction clusters as the non-local version used in the existence analysis. Then, we will construct a parametrization of the non-linear volume constraints in Section \ref{SectionParametrization} and derive variational formulas for the parametrized surface energy in Section \ref{SectionVariationFormula}. With the work of the previous section we can prove the \L ojasiewicz-Simon gradient inequality itself in Section \ref{SectionProofofLsi}. Finally, we can use it to derive the desired stability results in Section \ref{SectionStabilityAnalysis}.

\section{Preliminaries}\label{SectionPrelim}
\noindent

\subsection{Notational Conventions}
\label{SectionNotation}
Throughout the whole article $\Gamma(t)$ will denote an evolving triple junction cluster at time $t$. We will denote by $\Gamma^1(t), \Gamma^2(t)$ and $\Gamma^3(t)$ the three hypersurfaces and by $\Sigma(t)$ the arising triple junction, that is
\begin{align*}
\Sigma(t)=\partial\Gamma^1(t)=\partial\Gamma^2(t)=\partial\Gamma^3(t).
\end{align*}
Hereby, we will only consider situations, where all $\Gamma^i(t)$ are compact, embedded, connected and do not intersect each other. Therefore, two of the hypersurfaces will always form a volume containing the third hypersurface, which we will choose to be $\Gamma^1$. By $\Omega_{12}$ and $\Omega_{13}$ we denote the volume enclosed by $\Gamma^1$ and $\Gamma^2$ resp. $\Gamma^1$ and $\Gamma^3$. We choose the normals $N_{\Gamma^i(t)}$ of the hypersurfaces such that the normal of $\Gamma_1$ points in the interior of $\Omega_{12}$, the one of $\Gamma^2$ outside of $\Omega_{12}$ and the one of $\Gamma^3$ into the inside of $\Omega_{13}$. The outer conormals will be denoted by $\nu$. \\
Furthermore, we use the standard notation for quantities of differential geometry, see for example \cite[Chapter 2]{kuehneldifferentialgeometry2015}. That includes the canonical basis $\{\partial_i\}_{i=1,...,n}$ of the tangent space $T_p\Gamma$ at a point $p\in\Gamma$ induced by a parametrization $\varphi$, the entries $g_{ij}$ of the metrical tensor $g$, the entries $g^{ij}$ of the inverse metric tensor $g^{-1}$, the Christoffel symbols $\Gamma^{i}_{jk}$ , the second fundamental form $II$, its squared norm $|II|^2$ and the entries $h_{ij}$ of the shape operator. We use the usual differential operators on a manifold $\Gamma$, which are the surface gradient $\nabla_{\Gamma}$, the surface divergence $\divergenz_{\Gamma}$ and the Laplace-Beltrami operator $\Delta_{\Gamma}$.\\
By $\rho$ we will denote the evolution in normal direction and by $\mu$ the evolution in tangential direction, which we will use to track the evolution of $\Gamma(t)$ over $\Gammaast$ via a direct mapping approach. $\Gamma_{\rho}$ resp. $\Gamma_{\rho,\mu}$ will denote the (triple junction) manifold that is given as graph over $\Gammaast$, cf. \eqref{EquationNormalGraphTJ}. Sub- and superscripts $\rho$ resp. $\mu$ on a quantity will indicate that the quantity refers to the manifold $\Gamma_{\rho,\mu}$. Hereby, we will normally omit $\mu$ as long it is given as function in $\rho$. An asterisk will denote an evaluation in the reference geometry. Both conventions are also used for differential operators. For example, we will write $\nabla_{\rho}$ for $\nabla_{\Gamma_{\rho}}$ and $\gast$ for $\nabla_{\Gammaast}$. We will denote by $J_{\rho}$ the transformation of the surface measure, that is,
\begin{align*}
\dH^n(\Gamma_{\rho})=J_{\rho}\dH^n(\Gammaast).
\end{align*} \\
If we index a domain or a submanifold in $\R^n$ with a $T$ or $\delta$ in the subscript, this indicates the corresponding parabolic set, e.g., $\Gamma_T=\Gamma\times[0,T]$. With an abuse of notation, in most parts of the work we will not differ between quantities on $\Gamma_{\rho,\mu}$ and the pullback of them on $\Gammaast$. In the parts dealing with triple junction manifolds the index $i$ will be used to indicate that a quantity refers to the hypersurface $\Gamma^i$. A quantity in bold characters will refer to the vector consisting of the quantity on the three hypersurfaces of a triple junction, e.g., $\boldsymbol{\rho}=(\rho^1,\rho^2,\rho^3)$.\\
For the used function spaces we want to clarify that a subscript $(0)$ denotes a function $\brho$ on a triple junction manifold $\Gamma$ the condition
\begin{align}\label{EquationLinearizedVolumeConstraint}
\int_{\Gamma^1}\rho^1\dH^n=\int_{\Gamma^2}\rho^2\dH^n=\int_{\Gamma^3}\rho^3\dH^n.
\end{align}
Basically, this is the linearization of the conversation of the two enclosed volumes $\Omega_{12}$ and $\Omega_{13}$. Also, we denote by $\fint_{\Gamma}f\dH^n$ the mean value of a function $f\in L^1(\Gamma)$, that is,
\begin{align*}
\fint_{\Gamma}f\dH^n:=\Area(\Gamma)^{-1}\int_{\Gamma}f\dH^n.
\end{align*}
The subscript TJ in a function space will indicate that the function space has to be read as product space on each hypersurface. For example, we write
\begin{align*}
L^2_{TJ}(\Gamma):=L^2(\Gamma^1)\times L^2(\Gamma^2)\times L^2(\Gamma^3).
\end{align*} 
Although we work with the results of \cite{FeehannMaridakisLojasiewiczSimon}, we follow the notation of \cite{Chill2003OnTheLojasiewiczSimon}. In particular, $E: V\to \R$ denotes an energy functional (in our case the surface area) on a Banach spaces $V$, $\mathcal{M}$ its first derivative and $\mathcal{L}(0)$ its second derivative at point $0$. Hereby, we will always consider $\mathcal{L}(0)$ as function on $V$ with values (on a subset of) $V'$.   \\
Finally, we will always use the convention of dynamical constants. 

\subsection{Some Important Results from Functional Analysis}

A very important tool for our analysis will be the implicit function theorem for maps between Banach spaces. Therefore, we want to mention the following version from \cite[Theorem 4B]{ZeidlerNonlinearFunctionalAnalysisI}.
\begin{proposition}[Implicit function theorem of Hildebrandt and Graves]\label{PropostionImplicitFunction}\ \\
	Suppose that:\vspace{-0,1cm}
	\begin{itemize}
		\item[i.)] the mapping $F\colon U(x_0,y_0)\subset X\times Y\to Z$ is defined on an open neighborhood $U(x_0,y_0)$, and $F(x_0,y_0)=0$, where $X,Y,Z$ are Banach spaces over $\mathbb{K}\in\{\R,\C\}$ and $x_0\in X, y_0\in Y$. \vspace{-0,2cm}
		\item[ii.)] $\partial_y F$ exists as a partial Fr\'echet-derivative on $U(x_0,y_0)$ and $\partial_yF(x_0,y_0)\colon Y\to Z$ is bijective. \vspace{-0,2cm}
		\item[iii.)] $F$ and $\partial_y F$ are continuous at $(x_0,y_0)$.\vspace{-0,2cm}
	\end{itemize}
	Then, the following are true:\vspace{-0,2cm}
	\begin{itemize}
		\item[a.)] Existence and uniqueness: There exist positive numbers $r_0$ and $r$ such that for every $x\in X$ satisfying $\|x-x_0\|_X\le r_0$ there is exactly one $y(x)\in Y$ for which $\|y(x)-y_0\|_Y\le r$ and $F(x,y(x))=0$.\vspace{-0,2cm}
		\item[b.)] Continuity: If $F$ is continuous in a neighborhood of $(x_0,y_0)$, then $y(\cdot)$ is continuous in a neighborhood of $x_0$.\vspace{-0,2cm}
		\item[c.)] Continuous differentiability: If $F$ is a $C^m$-map on a neighborhood of $(x_0,y_0)$, $1\le m\le \infty$, then $y(.)$ is also a $C^m$-map on a neighborhood of $x_0$.
	\end{itemize}
\end{proposition}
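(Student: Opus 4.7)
The plan is to reduce the existence of $y(x)$ to a fixed-point problem and apply Banach's contraction mapping principle, which is the classical route for the implicit function theorem in Banach spaces. Since $\partial_y F(x_0, y_0)\colon Y \to Z$ is a continuous linear bijection between Banach spaces, the open mapping theorem yields that $A := [\partial_y F(x_0, y_0)]^{-1}$ is bounded. I then introduce the auxiliary map $T(x, y) := y - A\, F(x, y)$, so that the equation $T_x(y) := T(x, y) = y$ is equivalent to $F(x, y) = 0$.

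The heart of the argument is to verify that for suitable $r, r_0 > 0$ the map $T_x$ is a $\tfrac{1}{2}$-contraction on $\overline{B}_r(y_0)$ that sends this ball to itself, uniformly for $x \in B_{r_0}(x_0)$. Since
\begin{align*}
\partial_y T(x_0, y_0) = \Id - A\, \partial_y F(x_0, y_0) = 0,
\end{align*}
continuity of $\partial_y F$ at $(x_0, y_0)$ provides a neighborhood on which $\|\partial_y T(x, y)\|_{L(Y)} \leq \tfrac{1}{2}$; the mean value inequality applied along line segments in $\overline{B}_r(y_0)$ then yields the contraction estimate. For the self-map property, I shrink $r_0$ further so that continuity of $F$ at $(x_0, y_0)$ combined with $F(x_0, y_0) = 0$ gives $\|T_x(y_0) - y_0\| = \|A\, F(x, y_0)\| \leq r/2$; together with the contraction estimate this produces $\|T_x(y) - y_0\| \leq r$ for all $y \in \overline{B}_r(y_0)$. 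Banach's fixed-point theorem now delivers the unique $y(x)$, establishing (a).

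For (b), from the decomposition $y(x) - y(x') = T_x(y(x)) - T_x(y(x')) + T_x(y(x')) - T_{x'}(y(x'))$ and the uniform $\tfrac{1}{2}$-contraction property I obtain
\begin{align*}
\|y(x) - y(x')\|_Y \leq 2\,\|A\|\,\|F(x, y(x')) - F(x', y(x'))\|_Z,
\end{align*}
which tends to $0$ as $x \to x'$ by the neighborhood-continuity assumption on $F$. For (c), I would argue inductively on $m$: once $F \in C^m$ in a neighborhood, continuity of $\partial_y F$ and openness of the set of invertible operators ensure that $\partial_y F(x, y(x))$ stays bijective for $x$ near $x_0$, so the linearized equation yields the candidate derivative $y'(x) = -[\partial_y F(x, y(x))]^{-1} \partial_x F(x, y(x))$; a direct difference estimate identifies it as the true Fr\'echet derivative, and the formula exhibits $y'$ as a composition of $C^{m-1}$ maps, so iteration gives $y \in C^m$.

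The main subtlety, and likely the only genuinely delicate point, is that hypothesis (iii) only provides continuity of $F$ and $\partial_y F$ at the single point $(x_0, y_0)$ rather than on a neighborhood. This forbids any argument that integrates $\partial_y T$ along curves through points other than $(x_0, y_0)$ using uniform estimates elsewhere, but pointwise continuity is exactly strong enough to secure the operator bound $\|\partial_y T\|_{L(Y)} \leq \tfrac{1}{2}$ on a sufficiently small ball, which is all the contraction argument needs. Parts (b) and (c) then pick up their correspondingly stronger assumptions precisely because propagating regularity of $y$ requires propagating regularity of $F$ away from the base point.
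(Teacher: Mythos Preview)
Your argument is the classical contraction-mapping proof and is correct; the one point you flag about continuity only at $(x_0,y_0)$ is handled exactly as you say, since the existence of $\partial_y F$ on all of $U(x_0,y_0)$ already makes $y\mapsto F(x,y)$ continuous along segments, so the mean-value inequality applies. Note, however, that the paper does not supply its own proof of this proposition: it is quoted as a standard result from Zeidler, \textit{Nonlinear Functional Analysis and its Applications I}, Theorem~4B, so there is no in-paper argument to compare against---your write-up is essentially the proof one finds in that reference.
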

We will need to study analyticity of maps between Banach spaces. To define it, we first have to introduce so called power operators.
\begin{definition}[Power operator]\ \\
	Let $X$ and $Y$ be Banach spaces over $\mathbb{K}\in\{\R,\C\}$. Let there be given a $k$-linear, bounded operator $M: X\times\cdots\times X\to Y$ which is symmetric in all variables. A power operator of degree $k$ is created from setting for all $m,n\in\{0,1,...,k\}$ with $m+n=k$ and $x,y\in X$
	\begin{align*}
	Mx^my^n:=M(\underbrace{x,...,x}_{m\text{ times}},\underbrace{y,...,y}_{n-\text{times}}).
	\end{align*}
\end{definition}
\begin{definition}[Analytic Operators between Banach Spaces]\ \\
	Let $Z$ and $Y$ be Banach spaces over $\mathbb{K}$ and $T: Z\supset D(T)\to Y$ defined on an open set $D(T)$. \vspace{-0,2cm}
	\begin{itemize}
		\item[i.)] $T$ is called analytic at $z_0\in D(T)$, if there is a sequence $\{T_k\}_{k\in\N_0}$ of power operators of degree $k$ together with an open neighborhood $U$ of $z_0\in D(T)$ such that for all $z\in U$ the series
		\begin{align}
		Sz:=\sum_{k=0}^{\infty} T_k(z-z_0)^k
		\end{align}   
		exists and we have $Sz=Tz$ for all $z\in U$. \vspace{-0,3cm}
		\item[ii.)] $T$ is called analytic on an open subset $V\subset D(T)$, if it is analytic at every point $z_0\in V$.
	\end{itemize}
\end{definition}
\begin{remark}
	Note that if $T$ is analytic at a point $z_0$, this implies that $T$ is analytic in an open neighborhood of $z_0$, cf. \cite[p.98]{ZeidlerNonlinearFunctionalAnalysisI}.
\end{remark}
Very important for our work will also be that the implicit function theorem inherits also analyticity, which is Corollary 4.23 in \cite{ZeidlerNonlinearFunctionalAnalysisI}.
\begin{corollary}[Analytic version of the implicit function theorem]\label{AnalyticVersionOfImplicitFuntion}\ \\
	If in the situation of Proposition \ref{PropostionImplicitFunction} the function $F$ is also analytic at $(x_0,y_0)$, then the solution $y$ is analytic at $x_0$ as well.
\end{corollary}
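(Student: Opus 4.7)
The plan is to complexify the situation and apply a holomorphic implicit function theorem, then restrict back to the real axis. Since $F$ is assumed analytic at $(x_0,y_0)$, by definition it is locally represented by a convergent series of power operators with values in $Z$. Treating the real Banach spaces $X, Y, Z$ as sitting inside their complexifications $X_{\C}, Y_{\C}, Z_{\C}$, this series still converges on a small polydisk around $(x_0, y_0)$ and defines a holomorphic extension $\tilde{F}\colon \tilde{U}(x_0, y_0)\subset X_{\C}\times Y_{\C}\to Z_{\C}$ with $\tilde{F}(x_0, y_0)=0$. The partial derivative $\partial_y \tilde{F}(x_0, y_0)\colon Y_{\C}\to Z_{\C}$ is precisely the complex-linear extension of the original $\partial_y F(x_0, y_0)$, and hence remains a Banach space isomorphism.

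Next I would apply Proposition \ref{PropostionImplicitFunction} to $\tilde{F}$, viewed as a real-$C^1$ map between the underlying real Banach spaces of $X_{\C}\times Y_{\C}$ and $Z_{\C}$. This yields an open neighborhood $\tilde{U}_0\ni x_0$ in $X_{\C}$ together with a unique continuous $\tilde{y}\colon\tilde{U}_0\to Y_{\C}$ satisfying $\tilde{F}(\tilde{x},\tilde{y}(\tilde{x}))=0$ and $\tilde{y}(x_0)=y_0$. The crucial step is to upgrade $\tilde{y}$ from continuous to holomorphic. Differentiating the identity $\tilde{F}(\tilde{x}, \tilde{y}(\tilde{x}))=0$ in the real Fr\'echet sense and inverting $\partial_y\tilde{F}$ gives
\[
 D\tilde{y}(\tilde{x}) \;=\; -\bigl[\partial_y\tilde{F}(\tilde{x}, \tilde{y}(\tilde{x}))\bigr]^{-1}\,\partial_x\tilde{F}(\tilde{x},\tilde{y}(\tilde{x})).
\]
Since both factors on the right are complex linear as maps $X_{\C}\to Z_{\C}$ resp.\ $Z_{\C}\to Y_{\C}$, their composition is complex linear, so $\tilde{y}$ is complex Fr\'echet differentiable and thus holomorphic on $\tilde{U}_0$.

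Finally, the restriction of $\tilde{y}$ to $X\cap \tilde{U}_0$ solves $F(x, \cdot)=0$ near $x_0$ with values in $Y$, because $F$ preserves the real structure and $\tilde{y}(x_0)=y_0\in Y$; combined with the real uniqueness statement of Proposition \ref{PropostionImplicitFunction}, this restriction must coincide with the function $y$ constructed there. Being the restriction of a holomorphic map between complex Banach spaces to a real affine subspace, $y$ is represented near $x_0$ by a locally convergent series of power operators and is therefore analytic at $x_0$ in the sense defined above.

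The main obstacle is the upgrade from real-$C^1$ to holomorphic regularity of $\tilde{y}$. Should the Cauchy-Riemann argument above feel too casual in an infinite-dimensional setting, a robust alternative is to run the contraction-mapping proof of Proposition \ref{PropostionImplicitFunction} directly inside the Banach space of bounded holomorphic $Y_{\C}$-valued maps on a small polydisk (complete under the sup-norm by Weierstrass): the iteration $\tilde{y}_{n+1}(\tilde{x}) := \tilde{y}_n(\tilde{x}) - [\partial_y \tilde{F}(x_0,y_0)]^{-1}\tilde{F}(\tilde{x}, \tilde{y}_n(\tilde{x}))$ preserves holomorphy at each step and converges uniformly, producing a holomorphic fixed point that must equal $\tilde{y}$ by uniqueness.
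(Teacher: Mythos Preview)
The paper does not give its own proof of this corollary; it simply cites it as Corollary~4.23 in Zeidler, \emph{Nonlinear Functional Analysis and its Applications I}. So there is no in-paper argument to compare against.

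Your complexification approach is a standard and valid route to the analytic implicit function theorem. A couple of points could be stated more carefully. First, before you can differentiate the identity $\tilde F(\tilde x,\tilde y(\tilde x))=0$ to obtain the formula for $D\tilde y$, you need $\tilde y$ to be real-$C^1$, not merely continuous; this follows from part~(c) of Proposition~\ref{PropostionImplicitFunction} since $\tilde F$ is holomorphic and hence $C^\infty$ as a real map. Second, the claim that $\tilde y$ maps real $x$ to real $y$ deserves one line: because the power series of $\tilde F$ has real coefficients, $\overline{\tilde F(\tilde x,\tilde y)}=\tilde F(\overline{\tilde x},\overline{\tilde y})$, so for real $x$ both $\tilde y(x)$ and $\overline{\tilde y(x)}$ solve $\tilde F(x,\cdot)=0$ near $y_0$, forcing $\tilde y(x)\in Y$ by uniqueness. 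With these two clarifications the argument is complete. Your alternative via a contraction in the Banach space of bounded holomorphic maps is in fact the cleaner route, since it sidesteps the bootstrap from continuous to $C^1$ to holomorphic entirely; this is essentially how many textbook proofs (including, in spirit, Zeidler's treatment via majorant series and fixed points) are organized.
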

Finally, we note the following fact about compact perturbations of Fredholm operators, which is Proposition 8.14(3) in \cite{ZeidlerNonlinearFunctionalAnalysisI}.
\begin{proposition}[Compact perturbation of Fredholm operators]\label{PropostionCompactPerturbationofFredhom}\ \\
	Let $X,Y$ be Banach spaces, $B:X\to Y$ a Fredholm operator and $C: X\to Y$ a compact operator. Then the sum $B+C: X\to Y$ is also a Fredholm operator and the Fredholm index satisfies
	\begin{align}
	ind(B+C)=ind(B).
	\end{align}  
\end{proposition}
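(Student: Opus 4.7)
The plan is to combine the Atkinson (parametrix) characterisation of Fredholm operators with homotopy invariance of the Fredholm index. First I would recall that a bounded operator $T\colon X\to Y$ between Banach spaces is Fredholm if and only if it has a parametrix, meaning there exist bounded operators $S_1,S_2\colon Y\to X$ and compact operators $K_1\colon X\to X$, $K_2\colon Y\to Y$ with $S_1 T=\Id_X-K_1$ and $T S_2=\Id_Y-K_2$. In other words, $T$ is Fredholm precisely when its class in the Calkin algebra (bounded operators modulo the two-sided ideal of compact operators) is invertible. This step is elementary once one uses that $\ker(T)$ finite-dimensional together with $\im(T)$ closed of finite codimension provide continuous linear projections onto complements, and conversely the parametrix identities force $\ker(T)\subset\ker(\Id_X-K_1)$ and $\im(T)\supset \im(\Id_Y-K_2)$, both of which are finite-dimensional obstructions.

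For the Fredholm property of $B+C$ I would take parametrices $S_1,S_2$ of $B$ and simply compute
\[
S_1(B+C)=\Id_X-(K_1-S_1 C),\qquad (B+C)S_2=\Id_Y-(K_2-C S_2).
\]
Because the compact operators form a two-sided ideal in the bounded operators, $S_1 C$ and $C S_2$ are compact, hence so are $K_1-S_1 C$ and $K_2-C S_2$. Thus $B+C$ also admits parametrices and is therefore Fredholm by the Atkinson characterisation.

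For the identity of indices I would appeal to homotopy invariance. The map $t\in[0,1]\mapsto B+tC$ is continuous in the operator-norm topology, and by the argument just given every member is Fredholm (apply the Fredholm step with $C$ replaced by $tC$). Since the Fredholm index is locally constant on the open set of Fredholm operators, and $[0,1]$ is connected, $t\mapsto \mathrm{ind}(B+tC)$ is constant; evaluating at $t=0$ and $t=1$ gives $\mathrm{ind}(B+C)=\mathrm{ind}(B)$.

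The main obstacle is the local constancy of the index, which in turn rests on upper semi-continuity of the dimensions of kernel and cokernel at a Fredholm point. The standard way to secure this is to decompose $B$ as a $2\times 2$ block operator with respect to algebraic complements of $\ker(B)$ in $X$ and of $\im(B)$ in $Y$, with the main block invertible. A sufficiently small perturbation preserves the invertibility of that block, and a short finite-dimensional linear-algebra argument on the remaining blocks shows that nullity and defect can only drop, and by the same amount, so that the index is unchanged under small perturbations. Once this is in place the rest of the proof collapses to the algebraic manipulation in the Calkin algebra carried out above.
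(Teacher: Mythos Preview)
Your argument is correct and follows the standard Atkinson/Calkin-algebra route combined with the homotopy invariance of the index; there are no gaps. The paper itself does not supply a proof of this proposition at all: it merely cites it as Proposition~8.14(3) in Zeidler, \emph{Nonlinear Functional Analysis and its Applications I}, so there is no ``paper's own proof'' to compare against. Your write-up is in fact more detailed than what the paper provides.
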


\subsection{Function Spaces on Manifolds}
In this section we want to introduce the two most important function spaces on manifolds we will use. These are Sobolev and parabolic H\"older spaces. In this section, $(\Gamma,\mathcal{A})$ will always be a compact, orientable, embedded submanifold $\Gamma$ of $\R^{n+1}$ with boundary together with a maximal atlas $\mathcal{A}$. 
\begin{definition}[Sobolev spaces on manifolds]
	Let $\Gamma$ be of class $C^j, j\in\N$. Then we define for $k\in\N, k<j, 1\le p\le \infty$ the Sobolev space $W^{k,p}(\Gamma)$ as the set of all functions $f: \Gamma\to\R$, such that for any chart $\varphi\in\mathcal{A}, \varphi: V\to U$ with $V\subset\Gamma, U\subset \R^n$ the map $f\circ \varphi^{-1}$ is in $W^{k,p}(U)$. Hereby, $W^{k,p}(U)$ denotes the usual Sobolev space. We define a norm on $W^{k,p}(\Gamma)$ by
	\begin{align}\label{EquationNormSobolevSpace}
	\|f\|_{W^{k,p}(\Gamma)}:=\sum_{i=1}^s \|f\circ \varphi_i^{-1}\|_{W^{k,p}(U_i)},
	\end{align}
	where $\{\varphi_i: V_i\to U_i\}_{i=1,...,s}\subset\mathcal{A}$ is a family of charts that covers $\Gamma$. 
\end{definition}
\begin{remark}[Equivalent norms on $W^{k,p}(\Gamma)$]\ \\\vspace{-0,6cm}
	\begin{itemize}
		\item[i.)] The norm on $W^{k,p}(\Gamma)$ depends on the choice of the $\varphi_i$ but for a different choice we will get an equivalent norm as the transitions maps are $C^j$. \vspace{-0,2cm}
		\item[ii.)] For the space $W^{1,p}(\Gamma)$ we will use the norm
		\begin{align}
		\|f\|_{W^{1,p}(\Gamma)}=\left(\int_{\Gamma}|\nabla_{\Gamma}f|^p+|f|^p\dH^n\right)^{\frac{1}{p}}.
		\end{align}
		Equivalence to (\ref{EquationNormSobolevSpace}) follows directly from the representation of the surface gradient in local coordinates. \vspace{-0,2cm}
		\item[iii.)] As usual we will write $H^k(\Gamma)$ for $W^{k,2}(\Gamma)$.
	\end{itemize}
\end{remark}
We want to make some further remarks on three properties of these spaces. The first one is a sufficient condition such that we get a Banach algebra structure.
\begin{lemma}[Banach space property of Sobolev spaces]\label{LemmaBanachspacepropertieofWkp}\ \\ 
	Let $\Gamma$ be smooth, $k\in\N, 1\le p\le\infty$, and assume that
	\begin{align}\label{EquationConditionforBanachAlgebra}
	p>\frac{2n}{k}.
	\end{align} 
	Then, $W^{k,p}(\Gamma)$ is a Banach algebra. In particular, $H^k(\Gamma)$ is a Banach algebra for $k>n$.
\end{lemma}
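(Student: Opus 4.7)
The plan is to reduce the problem to a bounded open subset of $\R^n$ via the chart-based definition of $W^{k,p}(\Gamma)$, and then apply the generalized Leibniz rule together with a single Sobolev embedding into $L^{\infty}$. Since $\Gamma$ is smooth and compact, the norm \eqref{EquationNormSobolevSpace} is given by a finite atlas $\{\varphi_i\colon V_i\to U_i\}$ with bounded Lipschitz $U_i\subset\R^n$, and since pointwise multiplication commutes with pullback by $\varphi_i^{-1}$, it suffices to prove the multiplicative estimate $\|\tilde f\tilde g\|_{W^{k,p}(U)}\le C\,\|\tilde f\|_{W^{k,p}(U)}\|\tilde g\|_{W^{k,p}(U)}$ on each bounded $U\subset\R^n$ and then sum over $i$.

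On such $U$ the generalized Leibniz rule yields
\begin{align*}
D^{\gamma}(\tilde f\tilde g)=\sum_{\alpha+\beta=\gamma}\binom{\gamma}{\alpha}D^{\alpha}\tilde f\cdot D^{\beta}\tilde g,\qquad |\gamma|\le k,
\end{align*}
so the task reduces to bounding each summand in $L^p(U)$. The key observation is that by the symmetry $\alpha\leftrightarrow\beta$ one may assume $|\alpha|\le|\beta|$, in which case $|\alpha|\le k/2$ and hence $k-|\alpha|\ge k/2$. The hypothesis $p>2n/k$ is exactly $(k/2)\,p>n$, which is the threshold for the Sobolev embedding $W^{k-|\alpha|,p}(U)\hookrightarrow L^{\infty}(U)$ on a bounded Lipschitz domain. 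Therefore $\|D^{\alpha}\tilde f\|_{L^{\infty}(U)}\le C\,\|\tilde f\|_{W^{k,p}(U)}$, and H\"older's inequality gives
\begin{align*}
\|D^{\alpha}\tilde f\cdot D^{\beta}\tilde g\|_{L^p(U)}\le\|D^{\alpha}\tilde f\|_{L^{\infty}(U)}\|D^{\beta}\tilde g\|_{L^p(U)}\le C\,\|\tilde f\|_{W^{k,p}(U)}\|\tilde g\|_{W^{k,p}(U)}.
\end{align*}
Summing over the Leibniz expansion and the finite atlas produces the Banach algebra estimate on $\Gamma$; the particular statement for $H^k(\Gamma)=W^{k,2}(\Gamma)$ follows by setting $p=2$, since then $p>2n/k$ is equivalent to $k>n$.

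The argument is essentially routine and I anticipate no serious obstacle beyond identifying the correct pairing in H\"older: a symmetric choice $q_1=q_2=2p$ would demand $p\ge n/2$, which is strictly stronger than the stated $p>2n/k$ once $k\ge 5$. It is precisely the asymmetric $L^{\infty}\cdot L^p$ split, combined with the symmetric reduction $|\alpha|\le k/2$, that pins down the exact hypothesis $p>2n/k$ of the lemma; everything else (equivalence of chart norms under smooth transition maps, standard Sobolev embeddings on bounded Lipschitz domains) is provided by the preliminary remarks and classical theory.
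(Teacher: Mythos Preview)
Your argument is correct and is the standard route to this result: localize via a finite atlas, apply the Leibniz rule, and use the asymmetric $L^{\infty}\cdot L^p$ H\"older split after the symmetric reduction $|\alpha|\le k/2$, which converts the hypothesis $p>2n/k$ into the Sobolev embedding threshold $(k-|\alpha|)\,p\ge (k/2)\,p>n$. The paper itself does not give a proof but only cites \cite[Lemma 2.9]{goesswein2019Dissertation}, so there is no detailed comparison to make; your proof is exactly the kind of argument one expects behind that citation.
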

\begin{proof}
Cf. \cite[Lemma 2.9]{goesswein2019Dissertation}.
\end{proof}
For a differentiable manifold the Poincar\'e inequality is well known, cf. \cite[Theorem 2.10]{hanzawaclassicalsolutionstefanproblem}. This can also be used for each surface of $\Gamma$. But by imposing additional boundary conditions, namely \eqref{EquationLinearizedVolumeConstraint}, one also can guarantee a version for the whole cluster.
\begin{proposition}[Poincar\'e-type inequality on triple junction manifolds]\label{LemmaPoincareineuqlaityfortriplejunction}\ \\
	Let $\gamma^i>0, i=1,2,3$. Consider the space 
	\begin{align}\label{EquationSpaceH10ontripejunctions}
	\mathcal{E}:=\left\{\boldsymbol{\rho}\in H^{1}_{TJ}(\Gamma)\Bigg| \sum_{i=1}^3\gamma^i\rho^i=0, \int_{\Gamma^1}\rho^1\dH^n=\int_{\Gamma^2}\rho^2\dH^n=\int_{\Gamma^3}\rho^3\dH^n\right\}.
	\end{align}
	Then, there is a constant $C>0$ such that for all $\boldsymbol{\rho}\in\mathcal{E}$ we have
	\begin{align}\label{EquationPoincareInequalityHeinsTripleJunction}
	\|\boldsymbol{\rho}\|_{L^2_{TJ}(\Gamma)}\le C\|\nabla_{\Gamma}\boldsymbol{\rho}\|_{L^2_{TJ}(\Gamma)}.
	\end{align}
\end{proposition}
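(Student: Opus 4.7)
I would prove this by a standard compactness/contradiction argument, the only delicate point being how the two very different types of side conditions (the pointwise linear relation on the one-dimensional triple junction $\Sigma$, and the three integral identities on the $\Gamma^i$) interact in the limit.

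The plan is to assume the inequality fails, so that there exists a sequence $\boldsymbol{\rho}_k\in\mathcal{E}$ with $\|\boldsymbol{\rho}_k\|_{L^2_{TJ}(\Gamma)}=1$ and $\|\nabla_\Gamma\boldsymbol{\rho}_k\|_{L^2_{TJ}(\Gamma)}\to 0$. Then $(\boldsymbol{\rho}_k)$ is bounded in $H^1_{TJ}(\Gamma)$. Applying Rellich--Kondrachov on each of the compact hypersurfaces $\Gamma^i$ separately, we can pass to a subsequence (not relabelled) that converges weakly in $H^1_{TJ}(\Gamma)$ and strongly in $L^2_{TJ}(\Gamma)$ to some $\boldsymbol{\rho}\in H^1_{TJ}(\Gamma)$. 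Strong $L^2$ convergence gives $\|\boldsymbol{\rho}\|_{L^2_{TJ}(\Gamma)}=1$, and lower semicontinuity of the gradient norm forces $\nabla_\Gamma\boldsymbol{\rho}=0$. Since each $\Gamma^i$ is connected, this means $\rho^i\equiv c^i$ is a constant on $\Gamma^i$.

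Next I would transfer the two defining conditions of $\mathcal{E}$ to the limit. The integral conditions pass directly under $L^2$-convergence, giving $c^1\,\mathcal{H}^n(\Gamma^1)=c^2\,\mathcal{H}^n(\Gamma^2)=c^3\,\mathcal{H}^n(\Gamma^3)$. For the trace identity $\sum_i\gamma^i\rho^i=0$ on $\Sigma$, I would use continuity of the trace operator $H^1(\Gamma^i)\to H^{1/2}(\Sigma)\hookrightarrow L^2(\Sigma)$ together with weak convergence in $H^1$, so that traces converge weakly in $L^2(\Sigma)$; testing against the constant function $1\in L^2(\Sigma)$ then gives $\gamma^1 c^1+\gamma^2 c^2+\gamma^3 c^3=0$ (the relation holds pointwise anyway because all $\rho^i$ are already constant). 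Combining this with $c^i=c^1\,\mathcal{H}^n(\Gamma^1)/\mathcal{H}^n(\Gamma^i)$ yields
\begin{align*}
c^1\Bigl(\gamma^1+\gamma^2\tfrac{\mathcal{H}^n(\Gamma^1)}{\mathcal{H}^n(\Gamma^2)}+\gamma^3\tfrac{\mathcal{H}^n(\Gamma^1)}{\mathcal{H}^n(\Gamma^3)}\Bigr)=0,
\end{align*}
and since $\gamma^i>0$ and all areas are positive the bracket is strictly positive, forcing $c^1=0$ and hence $c^2=c^3=0$. This contradicts $\|\boldsymbol{\rho}\|_{L^2_{TJ}(\Gamma)}=1$, proving the inequality.

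The only step requiring some care is the trace passage on the triple junction: one must make sure the trace operator really is well-defined and continuous from $H^1(\Gamma^i)$ to $L^2(\Sigma)$ in this triple-junction setting, which is standard since each $\Gamma^i$ is a compact $C^j$-manifold with boundary $\Sigma$ and one can work in local coordinates straightening $\Sigma$. Everything else is routine, and the mechanism of the proof makes the necessity of both types of side conditions transparent: without the trace relation the $c^i$ could be freely chosen proportional to $1/\mathcal{H}^n(\Gamma^i)$, and without the integral conditions arbitrary constants satisfying the single linear trace relation would be admissible; it is exactly the combination that kills all constants.
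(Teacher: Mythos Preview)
Your argument is correct and is exactly the standard compactness/contradiction route one would expect here; the paper itself does not spell out a proof but only refers to \cite[Proposition 2.10]{goesswein2019Dissertation}, so there is nothing substantive to compare against. Your handling of the two side conditions in the limit is fine, and the closing remark about why both conditions are needed is a nice observation.
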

\begin{proof}
	Cf. \cite[Proposition 2.10]{goesswein2019Dissertation}.
\end{proof}
The last property we want to mention concerning Sobolev spaces is the space $\mathcal{H}^{-1}$, which is the dual space of $\mathcal{E}$ from (\ref{EquationSpaceH10ontripejunctions}). As we showed above we have a Poincar\'e inequality on both spaces and therefore an equivalent inner product on these spaces is given by the $L^2$-product of the surface gradients. Using the Riesz isomorphism we can identify the elements of $\mathcal{H}^{-1}$ with $\mathcal{E}$, that is, for every $f\in\mathcal{H}^{-1}$ there is a unique $\rho\in\mathcal{E}$ with
\begin{align}
\int_{\Gamma}\nabla_{\Gamma}\rho\cdot\nabla_{\Gamma}\psi\dH^n=f(\psi)\quad \forall\psi\in \mathcal{E}.
\end{align}
But this is nothing else but the weak formulation of
\begin{align}
-\Delta_{\Gamma^i}\rho^i&=f^i & & \text{on }\Gammaast^i, i=1,2,3,\\
\gamma^1\rho^1+\gamma^2\rho^2+\gamma^3\rho^3&=0 & &\text{on }\Sigma,\\
\partial_{\nu^1}\rho^1=\partial_{\nu^2}\rho^2&=\partial_{\nu^3}\rho^3 & &\text{on }\Sigma.
\end{align}
Therefore, we will write for the element from the Riesz identification $(-\Delta_{\Gamma})^{-1}f$ and get the inner product on $\mathcal{H}^{-1}$ by
\begin{align}
\langle f,g\rangle_{\mathcal{H}^{-1}}:=\int_{\Gamma}\nabla_{\Gamma}((-\Delta_{\Gamma})^{-1}f)\cdot \nabla_{\Gamma}((-\Delta_{\Gamma})^{-1}g)\dH^n, \quad f,g\in\mathcal{H}^{-1}.
\end{align}
We will later need the following interpolation result.
\begin{lemma}[Interpolation between $\mathcal{H}^{-1}$ and $H^1$]\label{LemmaInterpolationH-1H1}\ \\
Let $\boldsymbol{\rho}\in\mathcal{E}$. Then, we have that 
	\begin{align}
	\|\boldsymbol{\rho}\|^2_{L^2_{TJ}(\Gamma)}\le  \|\boldsymbol{\rho}\|_{\mathcal{H}^{-1}(\Gamma)}\|\boldsymbol{\rho}\|_{H^1_{TJ}(\Gamma)}.
	\end{align}
\end{lemma}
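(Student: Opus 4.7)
The plan is to use the standard duality/Cauchy--Schwarz argument familiar from the interpolation $L^2 = (H^{-1}, H^1)_{1/2,2}$ on closed manifolds, adapting it to the triple-junction setting where the $\mathcal{H}^{-1}$ inner product has already been defined via the weak elliptic problem stated just above the lemma.

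First I would set $\boldsymbol{u} := (-\Delta_{\Gamma})^{-1}\boldsymbol{\rho} \in \mathcal{E}$. Since $\boldsymbol{\rho} \in \mathcal{E} \subset L^2_{TJ}(\Gamma)$, it naturally represents an element of $\mathcal{H}^{-1}$ through the $L^2$-pairing $\psi \mapsto \int_\Gamma \boldsymbol{\rho}\cdot\boldsymbol{\psi}\,\dH^n$ (restricted to test functions in $\mathcal{E}$), so the Riesz representative $\boldsymbol{u}$ satisfies the identity
\begin{align*}
\int_{\Gamma} \gradgamma \boldsymbol{u}\cdot\gradgamma \boldsymbol{\psi}\,\dH^n = \int_{\Gamma} \boldsymbol{\rho}\cdot\boldsymbol{\psi}\,\dH^n \qquad \forall\,\boldsymbol{\psi}\in\mathcal{E}.
\end{align*}
Crucially $\boldsymbol{\rho}$ itself is an admissible test function, since $\boldsymbol{\rho}\in\mathcal{E}$ by assumption.

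Next I would test with $\boldsymbol{\psi} = \boldsymbol{\rho}$ and apply the Cauchy--Schwarz inequality on each hypersurface $\Gamma^i$ (then sum):
\begin{align*}
\|\boldsymbol{\rho}\|_{L^2_{TJ}(\Gamma)}^2
= \int_{\Gamma}\gradgamma \boldsymbol{u}\cdot\gradgamma\boldsymbol{\rho}\,\dH^n
\le \|\gradgamma \boldsymbol{u}\|_{L^2_{TJ}(\Gamma)}\,\|\gradgamma \boldsymbol{\rho}\|_{L^2_{TJ}(\Gamma)}.
\end{align*}
The first factor equals $\|\boldsymbol{\rho}\|_{\mathcal{H}^{-1}(\Gamma)}$ directly from the definition of the $\mathcal{H}^{-1}$ inner product given in the excerpt (take $f=g=\boldsymbol{\rho}$ there). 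The second factor is bounded trivially by $\|\boldsymbol{\rho}\|_{H^1_{TJ}(\Gamma)}$. Combining these two bounds yields the claimed inequality.

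The whole argument is essentially one identity plus Cauchy--Schwarz, so there is no serious obstacle; the only point that warrants a moment of care is verifying that $\boldsymbol{\rho}$ is a legal test function, i.e.\ that it satisfies both the continuity condition $\sum_i \gamma^i\rho^i=0$ on $\Sigma$ and the equal-mean-value constraint \eqref{EquationLinearizedVolumeConstraint} that define $\mathcal{E}$. This is exactly the hypothesis $\boldsymbol{\rho}\in\mathcal{E}$, which is precisely what makes the triple-junction boundary terms in the weak formulation cancel and the Riesz identification well defined. No further structure of the cluster (such as Poincaré's inequality from Proposition \ref{LemmaPoincareineuqlaityfortriplejunction}) is needed for the inequality itself, although it is what guarantees that $\mathcal{H}^{-1}$ is indeed a Hilbert space with the stated inner product.
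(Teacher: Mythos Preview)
Your proof is correct and is exactly the standard duality argument one expects here: set $\boldsymbol{u}=(-\Delta_\Gamma)^{-1}\boldsymbol{\rho}$, test the weak identity with $\boldsymbol{\psi}=\boldsymbol{\rho}\in\mathcal{E}$, and apply Cauchy--Schwarz. The paper itself does not spell out a proof but defers to \cite[Lemma 2.12]{goesswein2019Dissertation}, where the same argument is carried out; nothing further is needed.
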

\begin{proof}
Cf. \cite[Lemma 2.12]{goesswein2019Dissertation}.
\end{proof} 
\begin{remark}[Different dual spaces]\ \\
	We will also need the dual space of $H^{1}_{TJ}(\Gammaast)$ equipped with the normal Sobolev norm. We will denote this space by $H^{-1}_{TJ}(\Gammaast)$. Clearly, the interpolation result from Lemma \ref{LemmaInterpolationH-1H1} is also true for $H^{-1}_{TJ}(\Gammaast)$.
\end{remark}
Now, we want to move on to the second kind of important function spaces, the parabolic H\"older spaces. It is both possible to introduce these spaces on manifolds in local coordinates, e.g. \cite[p.177]{lunardi2012analytic}, or without, e.g. \cite{depner2014mean}. We prefer the first approach as we want to use local results. We will first introduce these spaces on a bounded domain $\Omega$ in $\R^{n}$ with smooth boundary $\partial\Omega$. For this, we first need for $\alpha\in(0,1), a,b\in\R$ the two semi-norms for a function $f: \bar{\Omega}\times[a,b]\to\R$ given by\label{PageH-1produkt}
\begin{align*}
\langle f\rangle_{x,\alpha}&:=\sup_{x_1,x_2\in\bar{\Omega},t\in[a,b]}\frac{|f(x_1,t)-f(x_2,t)|}{|x_1-x_2|^{\alpha}},\\
\langle f\rangle_{t,\alpha}&:=\sup_{x\in\bar{\Omega},t_1,t_2\in[a,b]}\frac{|f(x,t_1)-f(x,t_2)|}{|t_1-t_2|^{\alpha}}.
\end{align*}
Now, we define for $k, k'\in\N, \alpha\in (0,1), m\in\N$ the spaces
\begin{align*}
C^{\alpha,0}(\bar{\Omega}\times[a,b])&:=\{f\in C(\bar{\Omega}\times[a,b])|\langle f\rangle_{x,\alpha}<\infty\},\\
\|f\|_{C^{\alpha,0}(\bar{\Omega}\times[a,b])}&:=\|f\|_{\infty}+\langle f\rangle_{x,\alpha},\\
C^{0,\alpha}(\bar{\Omega}\times[a,b])&:=\{f\in C(\bar{\Omega}\times[a,b])| \langle f\rangle_{t,\alpha}<\infty\},\\
\|f\|_{C^{0,\alpha}(\bar{\Omega}\times[a,b])}&:=\|f\|_{\infty}+\langle f\rangle_{t,\alpha}, \\
C^{k+\alpha, 0}(\bar{\Omega}\times[a,b])&:=\{f\in C(\bar{\Omega}\times[a,b])|\forall t\in[a,b]: f\in C^k(\bar{\Omega}),\\
&\phantom{:=\{ }  \forall\beta\in \N^n_0, |\beta|\le k: \partial^x_{\beta}f\in C^{\alpha,0}(\bar{\Omega}\times[a,b])\},\\
\|f\|_{C^{k+\alpha, 0}(\bar{\Omega}\times[a,b])}&:=\sum_{|\beta|\le k}\|\partial_{\beta}^xf\|_{\infty}+\sum_{|\beta|=k}\langle \partial_{\beta}^xf \rangle_{x,\alpha}, \\
C^{k+\alpha,\frac{k+\alpha}{m}}(\bar{\Omega}\times[a,b])&:=\{f\in C(\bar{\Omega}\times[a,b])|\forall\beta\in\N_0^n,i\in\N_0,mi+|\beta|\le k:\ \\ &\phantom{:=\{ }\partial_t^i\partial_{\beta}^xf\in C^{\alpha,0}(\bar{\Omega}\times[a,b])\cap C^{0,\frac{k+\alpha-mi-|\beta|}{m}}(\bar{\Omega}\times[a,b])  \},\\
\|f\|_{C^{k+\alpha,\frac{k+\alpha}{m}}(\bar{\Omega}\times[a,b])}&:=\sum_{0\le mi+|\beta|\le k}\left(\|\partial_t^i\partial_{\beta}^xf\|_{\infty}+\|\partial_t^i\partial_{\beta}^xf\|_{C^{0,\frac{k+\alpha-mi-|\beta|}{m}}(\bar{\Omega}\times[a,b])}\right)\\
&+\sum_{mi+|\beta|=k}\|\partial_t^i\partial_{\beta}^xf\|_{C^{\alpha,0}(\bar{\Omega}\times[a,b])}.
\end{align*}
Hereby, we denote by $\partial^x_{\beta}$ a partial derivative in space with respect to the multi-index $\beta$ and $\partial^i_t$ the $i$-th partial derivative in time. The parameter $m$ corresponds to the order of the differential equation one is considering and in our work it will always be four. Now, we can also define parabolic H\"older spaces on submanifolds as follows.
\begin{definition}[Parabolic H\"older spaces on submanifolds]\label{DefinitionParabolichHspacesmfd}\ \\
	Let $\Gamma$ be a $C^{r}$-submanifold of $\R^n$, either with or without boundary. Then we define for $k\in\N_0, k<r, \alpha\in(0,1), a,b\in\R,m \in\N$ the space $C^{k+\alpha,\frac{k+\alpha}{m}}(\Gamma\times[a,b])$ as the set of all functions $f:\Gamma\to\R$ such that for any parametrization $\varphi:\Omega\to V\subset\Gamma$ we have that $f\circ\varphi\in C^{k+\alpha,\frac{k+\alpha}{m}}(\bar{\Omega}\times[a,b])$. 
\end{definition}
\begin{remark}[Traces of parabolic H\"older spaces]\label{RemarkTracesParabolicHspaces}\ \\
	On the boundary $\Sigma$ of $\Gamma$ we may choose $\varphi$ to be a parametrization that flattens the boundary. From this we see that
	\begin{align*}
	f\in C^{k+\alpha,k'+\alpha'}(\Gamma\times[a,b])\Rightarrow f\big|_{\Sigma\times[a,b]}\in C^{k+\alpha,k'+\alpha'}(\Sigma\times[a,b]).
	\end{align*}
\end{remark}
\begin{remark}[H\"older regularity in time for derivatives]\ \\
	In some works these spaces are introduced with lower H\"older regularity in time for the lower order derivatives, cf. \cite{lunardi2012analytic} and \cite{depner2014mean}. Actually, this approach is equivalent due to interpolation results for H\"older continuous functions, cf. \cite[Proposition 1.1.4 and 1.1.5]{lunardi2012analytic}.
\end{remark}
Additionally, we want to mention two strong properties of parabolic H\"older spaces: its product estimates and the contractivity of lower order terms.
\begin{lemma}[Product estimates in parabolic H\"older spaces]\label{LemmaProductEstimatesParabolichHspace}\ \\
	Let $k,m\in\N,\alpha\in(0,1)$ and $f,g\in C^{k+\alpha,\frac{k+\alpha}{m}}(\overline{\Omega}\times[0,T])$. Then we have 
	\begin{align}\label{EquationProductareClosedinHolderSpaces}
	fg\in C^{k+\alpha,\frac{k+\alpha}{m}}(\overline{\Omega}\times[0,T]),
	\end{align}
	and furthermore we have that
	\begin{align}
	\|fg\|_{C^{k+\alpha,\frac{k+\alpha}{m}}(\overline{\Omega}\times[0,T])}&\le C \|f\|_{C^{k+\alpha,\frac{k+\alpha}{m}}(\overline{\Omega}\times[0,T])}\|g\|_{C^{k+\alpha,\frac{k+\alpha}{m}}(\overline{\Omega}\times[0,T])},\label{EquationProductEstimatesHolderSpaces1}\\
	\|fg\|_{C^{k+\alpha,\frac{k+\alpha}{m}}(\overline{\Omega}\times[0,T])}&\le C\left(\|f\|_{C^{k+\alpha,\frac{k+\alpha}{m}}(\overline{\Omega}\times[0,T])}\|g\|_{C^{k,0}}+\|f\|_{C^{k,0}}\|g\|_{C^{k+\alpha,\frac{k+\alpha}{m}}(\overline{\Omega}\times[0,T])}\right).\label{EquationProductEstimatesHolderSpaces2}
	\end{align}
\end{lemma}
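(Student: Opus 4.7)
The plan is to reduce everything to standard multiplication estimates for Hölder seminorms, using the Leibniz rule to compute the mixed space–time derivatives of $fg$. The building block I would use is the elementary inequality $\langle uv\rangle_{\alpha}\le\|u\|_{\infty}\langle v\rangle_{\alpha}+\|v\|_{\infty}\langle u\rangle_{\alpha}$ for $\alpha\in(0,1]$, which holds in identical form for both the spatial seminorm $\langle\cdot\rangle_{x,\alpha}$ and the temporal seminorm $\langle\cdot\rangle_{t,\alpha}$ on $\bar{\Omega}\times[0,T]$. Since the norm on $C^{k+\alpha,\frac{k+\alpha}{m}}$ is a finite sum of sup-norms and $\alpha$-Hölder seminorms of derivatives $\partial_t^i\partial^x_\beta$ up to order $mi+|\beta|\le k$, it suffices to estimate each such derivative of the product $fg$ individually and then sum.

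Applying the Leibniz rule, each $\partial_t^i\partial^x_\beta(fg)$ decomposes into a finite sum of terms of the form $(\partial_t^j\partial^x_\gamma f)(\partial_t^{i-j}\partial^x_{\beta-\gamma}g)$ with $(j,\gamma)\le(i,\beta)$. By the definition of the Hölder space, both factors are continuous on $\bar{\Omega}\times[0,T]$ and, at each order strictly below $k$, carry some $\alpha$-Hölder regularity in space and in time. Applying the basic product estimate to each summand then yields the containment \eqref{EquationProductareClosedinHolderSpaces} together with the first norm estimate \eqref{EquationProductEstimatesHolderSpaces1}, since every sup-norm factor of a derivative of $f$ or $g$ is bounded by the full $C^{k+\alpha,\frac{k+\alpha}{m}}$-norm.

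For the refined estimate \eqref{EquationProductEstimatesHolderSpaces2}, the observation is that when splitting via the basic product estimate one can always put the single $\alpha$-Hölder contribution on one factor and require only sup-norm control of derivatives up to order $k$ on the other; this is exactly what the $C^{k,0}$-norm encodes. Performing this split symmetrically in $f$ and $g$ and summing over all Leibniz multi-indices produces the two-term structure on the right-hand side of \eqref{EquationProductEstimatesHolderSpaces2}.

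The principal bookkeeping obstacle is the anisotropic time regularity hidden in the norm: at total order $mi+|\beta|=\ell\le k$ the derivative $\partial_t^i\partial^x_\beta$ of a function in $C^{k+\alpha,\frac{k+\alpha}{m}}$ is Hölder continuous in time with exponent $\frac{k+\alpha-\ell}{m}$, and one has to verify that, for any Leibniz split, each factor possesses at least this time-regularity so that the basic multiplication estimate can legitimately be invoked in the time variable. This follows once one notes that if the factors have total orders $\ell_1$ and $\ell_2$ with $\ell_1+\ell_2=\ell$, then the space in which each factor lives has time Hölder exponent $\frac{k+\alpha-\ell_s}{m}\ge\frac{k+\alpha-\ell}{m}$ for $s=1,2$. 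With this verification in place the remaining work is purely combinatorial, and the continuous embeddings $C^{k+\alpha,\frac{k+\alpha}{m}}\hookrightarrow C^{k'+\alpha,\frac{k'+\alpha}{m}}$ for $k'\le k$ absorb any mismatch of intermediate orders.
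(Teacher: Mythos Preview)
Your approach is the standard one: Leibniz rule plus the elementary product inequality $\langle uv\rangle_{\alpha}\le\|u\|_{\infty}\langle v\rangle_{\alpha}+\|v\|_{\infty}\langle u\rangle_{\alpha}$, applied separately in space and in time to every mixed derivative. The paper does not supply an argument but only cites \cite[Lemma~2.16]{goesswein2019Dissertation}, so there is no alternative proof to compare against; your route is exactly what one expects that reference to contain.

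There is one point in your treatment of \eqref{EquationProductEstimatesHolderSpaces2} that deserves more care. In the paper's conventions the norm $\|\cdot\|_{C^{k,0}}$ controls only \emph{spatial} derivatives $\partial_\beta^x$ with $|\beta|\le k$ (compare the definition of $C^{k+\alpha,0}$), not mixed derivatives $\partial_t^i\partial_\beta^x$. Thus in a Leibniz term $(\partial_t^{j}\partial_\gamma^x f)(\partial_t^{i-j}\partial_{\beta-\gamma}^x g)$ the factor you wish to estimate by $\|\cdot\|_{C^{k,0}}$ must carry \emph{no} time derivative. This is automatic whenever $mi+|\beta|\le k$ forces $i\le 1$, i.e.\ whenever $k<2m$; then exactly one factor in every Leibniz term has $\partial_t$, and you put the full parabolic norm on that factor and $C^{k,0}$ on the other (the Hölder contribution, spatial or temporal, always riding with the time-differentiated factor). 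Since the paper works throughout with $m=4$ and $k\le 4$, this constraint is always met and your argument goes through. But your phrase ``require only sup-norm control of derivatives up to order $k$ on the other; this is exactly what the $C^{k,0}$-norm encodes'' is not quite right without this restriction, because for $k\ge 2m$ terms such as $(\partial_t f)(\partial_t g)$ appear and neither factor can be placed in $C^{k,0}$.
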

\begin{proof}
Cf. \cite[Lemma 2.16]{goesswein2019Dissertation}.
\end{proof}
\begin{lemma}[Contractivity property of lower order terms in parabolic H\"older spaces]\label{LemmaContractivityLowerOrderTerms}\ \\
	Let $\Omega\subset\R^n$ be a bounded domain with smooth boundary and $$k,k'\in\{0,1,2,3,4\},k'<k,\alpha\in(0,1),a,b\in\R.$$ Then, we have for any $f\in C^{k+\alpha,\frac{k+\alpha}{4}}(\bar{\Omega}\times[a,b])$ that
	\begin{align}
	\|f\|_{C^{k'+\alpha,\frac{k'+\alpha}{4}}(\bar{\Omega}\times[a,b])}\le \|f\big|_{t=a}\|_{C^{k'+\alpha}(\bar{\Omega})}+C(b-a)^{\bar{\alpha}}\|f\|_{C^{k+\alpha,\frac{k+\alpha}{4}}(\bar{\Omega}\times[a,b])}.
	\end{align}
	Hereby, the constants $C$ and $\bar{\alpha}$ depend on $\alpha,k,k'$ and $\bar{\Omega}$. Especially, if $f\big|_{t=a}\equiv 0$, we have
	\begin{align}\label{EquationContractivityofLowerOrderTerms}
	\|f\|_{C^{k'+\alpha,\frac{k'+\alpha}{4}}(\bar{\Omega}\times[a,b])}\le C(b-a)^{\bar{\alpha}}\|f\|_{C^{k+\alpha,\frac{k+\alpha}{4}}(\bar{\Omega}\times[a,b])}.
	\end{align}
\end{lemma}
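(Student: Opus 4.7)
My plan is to reduce to the case $f\big|_{t=a} \equiv 0$. Write $f = g + \tilde{f}$ where $g(x,t) := f(x,a)$ is the trivial time extension and $\tilde{f} := f - g$. Since $g$ has no time dependence, a direct check from the definition gives $\|g\|_{C^{k'+\alpha,(k'+\alpha)/4}} = \|f\big|_{t=a}\|_{C^{k'+\alpha}}$ and $\|g\|_{C^{k+\alpha,(k+\alpha)/4}} \le \|f\big|_{t=a}\|_{C^{k+\alpha}} \le \|f\|_{C^{k+\alpha,(k+\alpha)/4}}$, while $\tilde{f}\big|_{t=a} \equiv 0$. Thus, once the vanishing-case contractivity \eqref{EquationContractivityofLowerOrderTerms} is established for $\tilde f$, the triangle inequality delivers the general bound.

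In the vanishing case, the decisive structural fact is that $k \in \{0,1,2,3,4\}$ combined with $k' < k$ forces $k' \le 3 < 4$. Consequently every admissible pair $(i,\beta)$ with $4i + |\beta| \le k'$ has $i = 0$, so the $C^{k'+\alpha,(k'+\alpha)/4}$ norm consists entirely of seminorms of the purely spatial derivatives $h := \partial_\beta^x f$ with $|\beta| \le k'$, and each such $h$ vanishes at $t = a$. I would then estimate the three types of contributions separately. For $\|h\|_\infty$, the identity $h(x,t) = h(x,t) - h(x,a)$ combined with the time Hölder regularity of $h$ at some order $\theta := \min(1,(k+\alpha-|\beta|)/4) \ge 1/4$ yields $\|h\|_\infty \le C(b-a)^\theta \|f\|_{C^{k+\alpha,(k+\alpha)/4}}$. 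For the time Hölder seminorm $\langle h\rangle_{t,\gamma'}$ with $\gamma' = (k'+\alpha-|\beta|)/4$, a standard Hölder-scale interpolation between $L^\infty$ (now small) and $C^{0,\gamma}$ in time with $\gamma \in (\gamma',1]$ available from the $C^{k+\alpha,(k+\alpha)/4}$ regularity produces a positive power of $b-a$. For the spatial Hölder seminorm $\langle h\rangle_{x,\alpha}$ arising when $|\beta| = k'$, I would use the interpolation $\langle h(\cdot,t)\rangle_{x,\alpha} \le C\|h(\cdot,t)\|_\infty^{1-\alpha}\|\nabla_x h(\cdot,t)\|_\infty^\alpha$ together with $\|\nabla_x h\|_\infty \le \|f\|_{C^{k+\alpha,(k+\alpha)/4}}$, which is finite because $k' + 1 \le k$; combining with the smallness of $\|h\|_\infty$ again gives a positive power of $b-a$. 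Summing finitely many such estimates and letting $\bar{\alpha}$ be the minimum of the exponents produced closes the argument.

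I expect the main obstacle to be the spatial Hölder seminorm, since the vanishing assumption $h\big|_{t=a} = 0$ only directly provides smallness in the time direction. The resolution is the \emph{strict} inequality $k' < k$, which guarantees at least one extra spatial derivative to trade, through the $L^\infty$–$C^1$ interpolation, for a positive power of $\|h\|_\infty$. Everything else amounts to elementary interpolation in Hölder scales and bookkeeping of which norm of $f$ controls each intermediate quantity.
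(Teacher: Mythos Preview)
The paper does not give a proof of this lemma; it simply refers to \cite[Lemma 2.17]{goesswein2019Dissertation}. So there is nothing to compare your argument against on the level of approach.

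Your proof is correct. The reduction to $f\big|_{t=a}\equiv 0$ via the constant-in-time extension $g$ is clean, and the key structural observation---that $k'\le 3$ forces $i=0$ in every term of the $C^{k'+\alpha,(k'+\alpha)/4}$ norm---is exactly what makes the estimate work. Your handling of each of the three types of contributions is sound; in particular, the interpolation $\langle h\rangle_{x,\alpha}\le C\|h\|_\infty^{1-\alpha}\|\nabla_x h\|_\infty^{\alpha}$ is the right device to convert the time-smallness of $\|h\|_\infty$ into smallness of the spatial H\"older seminorm, and the availability of $\nabla_x h$ in the higher norm is precisely where the strict inequality $k'<k$ enters. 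One minor remark: for the time H\"older seminorm you can also argue even more directly via
\[
\langle h\rangle_{t,\gamma'}\le (b-a)^{\gamma-\gamma'}\langle h\rangle_{t,\gamma},\qquad \gamma=\min\Big(1,\tfrac{k+\alpha-|\beta|}{4}\Big)>\gamma'=\tfrac{k'+\alpha-|\beta|}{4},
\]
which already yields the exponent $(k-k')/4$ without invoking the $L^\infty$ smallness; but your interpolation route is equally valid.
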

\begin{proof}
Cf. \cite[Lemma 2.17]{goesswein2019Dissertation}.
\end{proof}
As a final remark of this chapter we want to mention that we sometimes identify  Sobolev and H\"older spaces (in local coordinates) with Besov spaces, especially to use interpolation and composition results. As we do not need them on manifolds we will not introduce them here but they can be found, e.g., in \cite{Triebel1994TheoryofFunctionSpace}.

\subsection{Analytic Setting and Short Time Existence} \label{SubsectionShortTimeExistence}
Before moving on to the analysis of this paper we want to briefly summarize the analytic framework and the short time existence result we discussed in \cite{garckegoesweinpreprintshorttimeexistenceSDFTJ}. To get a suitable analytic setting we write the evolution of the triple junction clusters as normal graphs over a fixed reference triple junction cluster $\Gamma_{\ast}:=\Gamma^1_{\ast}\cup\Gamma^2_{\ast}\cup\Gamma^3_{\ast}\cup\Sigma_{\ast}$. Near the boundary we need to allow a tangential part in a neighborhood of the triple junction to describe general motions of the geometric object. So, we want to describe the evolving hypersurface $\Gamma$ as image of the diffeomorphism
\begin{align}
\Phi^{i}_{\boldsymbol{\rho},\boldsymbol{\mu}}: \Gamma^i_{\ast}\times [0,T]&\to \R^{n+1},\notag\\
(\sigma,t)&\mapsto \sigma+\rho^i(\sigma,  t)\Nast^i(\sigma)+\mu^i(\sigma,t)\tauast^i(\sigma), \label{EquationNormalGraphTJ}
\end{align}
where $\tau^i_{\ast}$ are fixed, smooth tangential vector fields on $\Gamma^i_{\ast}$ that equal $\nuast^i$ on $\Sigmaast$ and have a support in a neighborhood of $\Sigmaast$ in $\Gamma^i_{\ast}$. The tuple $(\boldsymbol{\rho},\boldsymbol{\mu})$ consists of the unknown functions for which we want to derive a PDE system. Hereby, $\bmu$ has to be given as function in the normal part as otherwise the resulting PDE problem will be degenerated. We know from the work of \cite{depner2014mean} that the condition
\begin{align}\label{EquationConditonForConcurrencyofTJ}
\Phi^1_{\boldsymbol{\rho},\boldsymbol{\mu}}(\sigma,t)=\Phi^2_{\boldsymbol{\rho},\boldsymbol{\mu}}(\sigma,t)=\Phi^3_{\boldsymbol{\rho},\boldsymbol{\mu}}(\sigma,t) & &\text{for }\sigma\in\Sigma_{\ast}, t\ge 0,
\end{align}
which guarantees concurrency of the triple junction, is equivalent to
\begin{align}\label{EquationEquivalentCondtionforTripleJunctionConservation}
\begin{cases}
\gamma^1\rho^1+\gamma^2\rho^2+\gamma^3\rho^3=0 &\text{on }\Sigma_{\ast},\\
\boldsymbol{\mu}=\mathcal{T}\boldsymbol{\rho} &\text{on }\Sigma_{\ast}.
\end{cases}
\end{align}
Hereby, the matrix $\mathcal{T}$ is given by
\begin{align*}
\mathcal{T}=\begin{pmatrix}0 & \frac{c^2}{s^1} & -\frac{c^3}{s^1} \\
-\frac{c^1}{s^2} & 0 & \frac{c^3}{s^2} \\ \frac{c^1}{s^3} & -\frac{c^2}{s^3} & 0 \end{pmatrix},
\end{align*}
with $s^i=\sin(\theta^i)$ and $c^i=\cos(\theta^i)$. The second line in (\ref{EquationEquivalentCondtionforTripleJunctionConservation}) implies that the tangential part $\boldsymbol{\mu}$ is uniquely determined on $\Sigmaast$ by the values of $\boldsymbol{\rho}$. This motivates to get rid of the degenerated degrees of freedom of $\boldsymbol{\mu}$ by setting 
\begin{align}
\mu^i(\sigma):=\mu(\pr^i_{\Sigma}(\sigma)),
\end{align} where $\pr^i_{\Sigma}$ denotes the projection from a point on $\Gamma^i_{\ast}$ to the nearest point on $\Sigma_{\ast}$. Note that this map is only well-defined on a neighborhood of $\Sigmaast$ in $\Gammaast^i$. But we only need $\mu^i$ to be defined on the support of $\tau^i_{\ast}$, which can be assumed to be arbitrary small. Therefore, this tangential part can be used to find a well-defined PDE formulation.
\begin{remark}[Analytic aspects of the non-local term]\ \\
Observe that $\bmu$ is non-local in space. For the existence analysis this is not a problem as it still induces only quasi-linear terms of lower order. Additionally, they depend continuously on $\brho$ and so they are easy to control. However, we will see in Section \ref{SectionTangentialPart} that they are not compatible with our \L ojasiewicz-Simon approach.
\end{remark}

To get a suitable PDE-setting for $\boldsymbol{\rho}$, we retract the equations from $\Gamma^i(t)$ on $\Gamma_{\ast}^i$. From here on, we will write $\Gamma_{\boldsymbol{\rho}}$ resp. $\Sigma_{\boldsymbol{\rho}}$ when referring to the triple junction cluster and the triple junction given as image of $\Phi_{\boldsymbol{\rho},\boldsymbol{\mu}}$. Also, we will use sub- and superscripts $i$ and $\boldsymbol{\rho}$ to denote pull-backs of quantities of the hypersurface $\Gamma_{\boldsymbol{\rho}}^i$ or of the triple junction $\Sigma_{\boldsymbol{\rho}}$.
This will also be applied on differential operators. So, for example, we will write for $(\sigma,t)\in\Gammaast^i\times[0,T], i=1,2,3,$
\begin{align*}
H^i_{\boldsymbol{\rho}}(\sigma,t)&:=H_{\Gamma^i_{\boldsymbol{\rho}}}(\Phi^i_{\boldsymbol{\rho},\boldsymbol{\mu}}(\sigma,t)),\\
\Delta_{\boldsymbol{\rho}}H_{\boldsymbol{\rho}}(\sigma,t)&:=\left(\Delta_{\Gamma^i_{\boldsymbol{\rho}}}H_{\Gamma^i_{\boldsymbol{\rho}}}\right)(\Phi^i_{\boldsymbol{\rho},\boldsymbol{\mu}}(\sigma,t)).
\end{align*}
Later, we will also sometimes use this notation when we consider the quantities on $\Gamma_{\boldsymbol{\rho}}$ but it will always be clear what is meant.\\
The evolution law \eqref{EquationMotionDuetoSurfaceDiffusion} together with the boundary conditions (CC), (AC), (CCP) and (FB) can now be written as the following problem on $\Gammaast$.
\begin{align}(SDFTJ)\label{EquationSurfaceDiffusionTripleJunctionGeometricVersiononRrenceFrame}
\begin{cases}
V^i_{\boldsymbol{\rho}}=-\Delta_{\brho}H^i_{\brho} &\text{on }\Gamma^i_{\ast}, t\in[0,T], i=1,2,3,\\
\gamma^1\rho^1+\gamma^2\rho^2+\gamma^3\rho^3=0 &\text{on }\Sigma_{\ast}, t\in[0,T],\\
\langle N^1_{\brho},N_{\brho}^2\rangle=\cos(\theta^3) &\text{on }\Sigma_{\ast}, t\in [0,T],\\
\langle N^2_{\brho}, N^3_{\brho}\rangle=\cos(\theta^1) &\text{on }\Sigma_{\ast}, t\in[0,T], \\
\gamma^1H^1_{\brho}+\gamma^2H^2_{\brho}+\gamma^3H^3_{\brho}=0 &\text{on }\Sigma_{\ast}, t\in[0,T],\\
\nabla_{\brho}H^1_{\brho}\cdot\nu_{\brho}^1=\nabla_{\brho}H^2_{\brho}\cdot\nu^2_{\brho} &\text{on }\Sigma_{\ast}, t\in[0,T], \\
\nabla_{\brho}H^2_{\brho}\cdot\nu^2_{\brho}=\nabla_{\brho}H^3_{\brho}\cdot\nu_{\brho}^3 &\text{on }\Sigma_{\ast}, t\in[0,T],\\
(\rho^i(\sigma,0),\mu^i(\sigma,0))=(\rho_0^i, \mu_0^i) &\text{on }\Gamma_{\ast}^i\times \Sigma_{\ast}, i=1,2,3.
\end{cases}
\end{align}
Here, we assume that the initial surfaces are given as $\Gamma^i_0=\Gamma^i_{\rho_0^i,\mu_0^i}, i=1,2,3$ for $\boldsymbol{\rho}_0$ small enough in the $C^{4+\alpha}$-norm and $\boldsymbol{\mu}_0=\mathcal{T}\boldsymbol{\rho}_0$. This will then guarantee that the $\Gamma_0^i$ are indeed embedded hypersurfaces, cf. \cite[Remark 1]{depner2014mean} .\\
We want to mention that one can rewrite $\eqref{EquationSurfaceDiffusionTripleJunctionGeometricVersiononRrenceFrame}_1$ as a parabolic equation
\begin{align}
\partial_t\rho^i=\mathcal{K}^i(\rho^i, \boldsymbol{\rho}|_{\Sigma_{\ast}}),
\end{align}
where the operator $$\mathcal{K}^i:C^{4+\alpha,1+\alpha/4}(\Gamma^i_{\ast}\times[0,T])\times C^{4+\alpha,1+\alpha/4}(\Sigma_{\ast}\times[0,T])\to C^{\alpha,\alpha/4}(\Gammaast\times[0,T]) $$ contains both derivatives of $\rho^i$ and $\boldsymbol{\rho}|_{\Sigma_{\ast}}$ of up to order four. A detailed calculation can be found in \cite[Page 38f.]{goesswein2019Dissertation}.\\
The boundary condition can be rewritten as operators $$\mathcal{G}^i: C^{4+\alpha,1+\alpha/4}_{TJ}(\Gammaast\times[0,T])\to C^{4-\sigma_i+\alpha,\frac{4-\sigma_i+\alpha}{4}}(\Sigmaast\times[0,T])$$ given by
\begin{align*}
\mathcal{G}^1(\boldsymbol{\rho})&:=\gamma^1\rho^1+\gamma^2\rho^2+\gamma^3\rho^3=0   &\text{ on }\Sigma_{\ast}\times[0,T],\\
\mathcal{G}^2(\boldsymbol{\rho})&:=\langle N^1_{\brho}, N^2_{\brho}\rangle-\cos(\theta^3)=0  &\text{ on }\Sigma_{\ast}\times[0,T],\\
\mathcal{G}^3(\boldsymbol{\rho})&:=\langle N^2_{\brho},N^3_{\brho}\rangle-\cos(\theta^1)=0  &\text{ on }\Sigma_{\ast}\times[0,T],\\
\mathcal{G}^4(\boldsymbol{\rho})&:=\gamma^1H^1_{\brho}+\gamma^2H^2_{\brho}+\gamma^3H^3_{\brho}=0   &\text{ on }\Sigma_{\ast}\times[0,T],\\
\mathcal{G}^5(\boldsymbol{\rho})&:=\nabla_{\brho}H^1_{\brho}\cdot\nu^1_{\brho}-\nabla_{\brho}H^2_{\brho}\cdot \nu^2_{\brho}=0    &\text{ on }\Sigma_{\ast}\times[0,T],\\
\mathcal{G}^6(\boldsymbol{\rho})&:=\nabla_{\brho}H^2_{\brho}\cdot\nu^2_{\brho}-\nabla_{\brho}H^3_{\brho}\cdot \nu^4_{\brho}=0    &\text{ on }\Sigma_{\ast}\times[0,T].
\end{align*}
With $\mathcal{G}(\boldsymbol{\rho}):=\big(\mathcal{G}^i(\boldsymbol{\rho})\big)_{i=1,...,6}$, \eqref{EquationSurfaceDiffusionTripleJunctionGeometricVersiononRrenceFrame} rewrites to the following  problem for $(\rho^1, \rho^2, \rho^3)$:
\begin{align}\label{EquationAnalyticFormulationofSDFTJ}
\begin{cases}
\partial_t\rho^i=\mathcal{K}^i(\rho^i, \boldsymbol{\rho}|_{\Sigma_{\ast}}) &\text{ on }\Gamma^i_{\ast}\times[0,T], i=1,2,3,\\
\mathcal{G}(\boldsymbol{\rho})=0 &\text{ on }\Sigma_{\ast}\times[0,T],\\
\rho^i(\cdot,0)=\rho^i_0 &\text{ on }\Sigma_{\ast}.
\end{cases}
\end{align}

As we seek for solutions smooth up to $t=0$, we will require some compatibility conditions for our initial data. Clearly, the boundary conditions given by $\mathcal{G}$ have to be fulfilled by the initial data. Additionally, for a smooth solution the conditions $\eqref{EquationSurfaceDiffusionTripleJunctionGeometricVersiononRrenceFrame}_1$ resp $\mathcal{G}^1(\brho)=0$ are differentiable in time at $t=0$ and so will also require a compatibility condition. Together, we get by considering \eqref{EquationSurfaceDiffusionTripleJunctionGeometricVersiononRrenceFrame} resp. \eqref{EquationAnalyticFormulationofSDFTJ} the corresponding compatibility conditions

\begin{align}
\label{EquationGeometricCompabilityConditionforSDFTJ}
(GCC)&\begin{cases}
\Gamma_0\text{ fulfils }(CC), (AC), (CCP), (FB) &\text{on } \Sigma_{0},\\
\sum_{i=1}^3\gamma^i\Delta_{\Gamma_0^i} H_{\Gamma_0^i}=0 &\text{on }\Sigma_0.
\end{cases}\\
\label{EquationCompabilityConditionsAnalyticVersionofSFDTJ}
(ACC)&\begin{cases}\mathcal{G}(\boldsymbol{\rho}_0)=0 & \text{on }\Sigma_{\ast}, \\ \mathcal{G}_0(\boldsymbol{\rho}_0):=\sum_{i=1}^3\gamma^i\mathcal{K}^i(\rho_0^i, \boldsymbol{\rho}_0\big|_{\Sigma_{\ast}})=0 & \text{on }\Sigma_{\ast}.
\end{cases} 
\end{align}
Indeed, one can prove that $(ACC)$ and $(GCC)$ are equivalent provided that $\brho_0$ is small enough in the $C^{4+\alpha}(\Gammaast)$-norm, cf. \cite[Lemma 4.1]{goesswein2019Dissertation}. In this framework we were able to prove in \cite{garckegoesweinpreprintshorttimeexistenceSDFTJ} the following short time existence result.
\begin{theorem}[Short time existence for surface diffusion flow with triple junctions] \label{TheoremSTETripleJunctions}\ \\
	Let $\Gammaast$ be a $C^{5+\alpha}$-reference cluster. Then there exists an $\varepsilon_0>0$ and a $T>0$ such that for all initial data $\brho_0\in C^{4+\alpha}_{TJ}(\Gammaast)$ with $\|\brho_0\|_{C^{4+\alpha}_{TJ}(\Gammaast)}<\varepsilon_0$, which fulfill the analytic compatibility conditions (\ref{EquationCompabilityConditionsAnalyticVersionofSFDTJ}), there exists a unique solution $\boldsymbol{\rho}\in C^{4+\alpha,1+\frac{\alpha}{4}}_{TJ}(\GammaastT)$ of (\ref{EquationAnalyticFormulationofSDFTJ}). Additionally, the norm of $\brho$ is bounded by a constant $R$ uniformly in $\brho_0$.
\end{theorem}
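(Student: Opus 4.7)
The plan is to linearize \eqref{EquationAnalyticFormulationofSDFTJ} around the reference configuration $\boldsymbol{\rho}\equiv 0$ and then recover the quasilinear problem through a Banach fixed point argument in parabolic H\"older spaces. At $\boldsymbol{\rho}=0$ one has $\Phi^i_{0,0}=\mathrm{id}$, so the principal symbol of $\mathcal{K}^i$ reduces to $-\Delta^2_{\Gamma_{\ast}^i}$, and the boundary operators $D\mathcal{G}^k(0)$ produce a coupled system of six linear conditions on $\Sigma_{\ast}$ of orders $0,1,1,2,3,3$ respectively. First I would write
\begin{align*}
\partial_t\rho^i+\Delta^2_{\Gamma_{\ast}^i}\rho^i &=f^i,\qquad  \text{on }\Gamma_{\ast}^i\times[0,T],\\
D\mathcal{G}(0)\boldsymbol{\rho}&=\boldsymbol{g},\qquad \text{on }\Sigma_{\ast}\times[0,T],\\
\boldsymbol{\rho}(\cdot,0)&=\boldsymbol{\rho}_0,
\end{align*}
and split the solution into a part carrying the initial data (extended into $\Gamma_{\ast,T}$ by standard techniques) and a part with zero initial data.

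The central analytic step is the maximal regularity statement for this linear problem: the map
\[
\boldsymbol{\rho}\mapsto\bigl(\partial_t\boldsymbol{\rho}+\Delta^2\boldsymbol{\rho},\,D\mathcal{G}(0)\boldsymbol{\rho},\,\boldsymbol{\rho}(\cdot,0)\bigr)
\]
is an isomorphism between $C^{4+\alpha,1+\alpha/4}_{TJ}(\Gamma_{\ast,T})$ and the natural product space of right-hand sides constrained by the compatibility conditions inherited from $(ACC)$. To establish this I would verify the Lopatinskii--Shapiro condition for the triple-junction boundary operator: after localizing at a boundary point and flattening $\Sigma_{\ast}$, the six linearized boundary conditions have to uniquely determine, in the half-space model on three sheets meeting along a common edge, the solutions of the ODE system $(\lambda+|\xi|^4+\partial_y^4)\hat{\rho}^i=0$ which decay as $y\to\infty$, for every $\lambda\in\Sigma_{\pi-\varepsilon}$ and every tangential cotangent vector $\xi$ with $(\lambda,\xi)\neq 0$. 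Checking this reduces to a finite-dimensional linear algebra computation whose invertibility follows from Young's law and the structure of the matrix $\mathcal{T}$; the positivity of the $\gamma^i$ and $\sin\theta^i$ rules out degeneracies. With the Lopatinskii--Shapiro condition in hand, the classical parabolic theory on manifolds with boundary (see e.g.\ the H\"older-space theory in Lunardi and its adaptation in \cite{depner2014mean}) yields the desired isomorphism.

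With the linear theory available, I would define, for a ball $B_R\subset C^{4+\alpha,1+\alpha/4}_{TJ}(\Gamma_{\ast,T})$ of functions satisfying the correct initial trace, a map $\Psi:B_R\to B_R$ by solving the linearized problem with right-hand sides
\[
f^i:=\mathcal{K}^i(\tilde{\rho}^i,\tilde{\boldsymbol{\rho}}|_{\Sigma_{\ast}})+\Delta^2\tilde{\rho}^i,\qquad \boldsymbol{g}:=D\mathcal{G}(0)\tilde{\boldsymbol{\rho}}-\mathcal{G}(\tilde{\boldsymbol{\rho}}),
\]
for $\tilde{\boldsymbol{\rho}}\in B_R$. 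The terms on the right are genuinely of lower order (the fourth-order part is cancelled by construction), depend smoothly on $\tilde{\boldsymbol{\rho}}$ in the H\"older topology by the product estimates of Lemma \ref{LemmaProductEstimatesParabolichHspace}, and vanish quadratically around $\boldsymbol{\rho}=0$; using the contractivity of lower-order terms from Lemma \ref{LemmaContractivityLowerOrderTerms} together with the smallness of $\|\boldsymbol{\rho}_0\|_{C^{4+\alpha}_{TJ}}<\varepsilon_0$ and short $T>0$, both the self-mapping property of $\Psi$ and a contraction estimate
\[
\|\Psi(\tilde{\boldsymbol{\rho}}_1)-\Psi(\tilde{\boldsymbol{\rho}}_2)\|\le C\bigl(\varepsilon_0+T^{\bar{\alpha}}\bigr)\|\tilde{\boldsymbol{\rho}}_1-\tilde{\boldsymbol{\rho}}_2\|
\]
follow. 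A Banach fixed point gives the unique solution with uniform bound $R$. The compatibility assumption $(ACC)$ ensures that the necessary boundary traces at $t=0$ match, which in turn is needed to obtain a solution in $C^{4+\alpha,1+\alpha/4}_{TJ}$.

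The main obstacle is the verification of the Lopatinskii--Shapiro condition for the six coupled boundary operators on three sheets: the mixed orders of the $\mathcal{G}^k$ and the implicit determination of the tangential motion $\boldsymbol{\mu}=\mathcal{T}\boldsymbol{\rho}$ make the algebraic determinant of the resulting $18\times 18$ system cumbersome, and one has to exploit Young's law throughout to see that the determinant stays away from zero uniformly on the sector. Once this hurdle is taken, the rest of the argument is a relatively standard quasilinear short-time existence scheme.
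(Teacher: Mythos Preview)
The present paper does not actually prove this theorem; it is quoted from the companion work \cite{garckegoesweinpreprintshorttimeexistenceSDFTJ}, and only the statement is reproduced here. That said, your sketch follows exactly the strategy carried out there, as one can infer from the scattered back-references in this paper: the linearized boundary operator $\mathcal{B}$ is set up in \cite[Section 4]{garckegoesweinpreprintshorttimeexistenceSDFTJ}, the Lopatinskii--Shapiro condition is verified in \cite[Section 5.2]{garckegoesweinpreprintshorttimeexistenceSDFTJ}, the resulting linear isomorphism is \cite[Theorem 5.1]{garckegoesweinpreprintshorttimeexistenceSDFTJ}, and the contraction mapping closes the argument as in \cite[Section 6]{garckegoesweinpreprintshorttimeexistenceSDFTJ}. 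So your outline is correct and essentially coincides with the actual proof.

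One minor correction: the Lopatinskii--Shapiro determinant is not $18\times 18$. On each of the three sheets the model equation is a scalar fourth-order ODE in the normal variable, contributing a two-dimensional space of decaying solutions; with six boundary conditions this gives a $6\times 6$ system. The algebra is still delicate because of the mixed orders $\sigma_i\in\{0,1,1,2,3,3\}$ and the coupling through $\mathcal{T}$, but the dimension count is six, not eighteen.
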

\section{Technical Problems Proving the \L ojasiewicz-Simon Gradient Inequality}\label{SectionTechnicalAspects}

In 1962 Stanis\l aw \L ojasiewicz found in his study of analytic functions an estimate for the distance of a point to the set of the roots of the function. In the same work \cite{lojasiewiczunepropertie} he realized  that his inequality can be used to show non-linear stability of equilibria of ordinary differential equations with a gradient flow structure. This idea was used by Simon in \cite{simonasymptoticsforaclassofnonlinear} to show the same results for PDEs with gradient flow structure and since then a lot of authors used it. The big advantage of this method is that one does not need to have a detailed knowledge of the set of equilibria and the stability argument is straight forward once a \L ojasiewicz-Simon gradient inequality, which we will abbreviate in the remaining work with LSI, is proven. A very important and general result on this is \cite{Chill2003OnTheLojasiewiczSimon}, wherein the author showed that the critical condition of analyticity of the first derivative of the energy functional (even $C^{\infty}$ is not enough to guarantee a LSI) needs only to hold on a subset called the critical manifold, which in most application is of finite dimension. Nevertheless, most authors use \cite[Corollary 3.11]{Chill2003OnTheLojasiewiczSimon}, for whose application most authors normally verify that the first derivative is analytic (on the whole space!) and the second derivative evaluated in the equilibrium is a Fredholm operator of index $0$. Hereby it is often suggested that the Hilbert structure is essential (see especially \cite{dallacquapozzispener2016lojasiewichsimongradientclampedcurves}) which strongly restricts the general result of \cite{Chill2003OnTheLojasiewiczSimon}. But actually, this is not the case and in \cite{FeehannMaridakisLojasiewiczSimon} an overview of different results about LSIs is given, from which we will use the following for our work, where we use the notation of \cite{Chill2003OnTheLojasiewiczSimon} to state the result.
\begin{proposition}[Feehan, Maridakis, 2015] \label{TheoremLSIFeehanMaridakis}\ \\
	Let $V,W$ be real Banach spaces with continuous embeddings $T': V\subset W$ and $T: W\hookrightarrow V'$ such that the embedding $j:=T\circ T':V\hookrightarrow V'$ is a definite embedding, that is, the bilinear form $V\times V\to\R, (x,y)\mapsto j(y)(x)$ is definite. \\
	Let $U\subset V$ be an open subset, $E: U\to\R$ a $C^2$-function such that $\mathcal{M}=E'$ is real analytic as a map with values in $W$. Furthermore, suppose that $x_{\infty}\in U$ is a critical point of $E$ and $\mathcal{L}=E''(x_{\infty})$ is a Fredholm operator with index zero as a map with values in $W$. Then, there are constants $C\in(0,\infty), \sigma\in (0,1]$ and $\theta\in(0,\frac{1}{2}]$ such that for all $x\in U$ with
	\begin{align*}
	\|x-x_{\infty}\|_V\le \sigma,
	\end{align*}
	we have
	\begin{align}\label{EquationLSIFeehanMaridakis}
	|E(x)-E(x_{\infty})|^{1-\theta}\le C\|\mathcal{M}(x)\|_{W}.
	\end{align}	
\end{proposition}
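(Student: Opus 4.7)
My plan is to use a Lyapunov--Schmidt reduction onto the finite-dimensional kernel $K := \ker \mathcal{L}$, combined with the classical finite-dimensional Łojasiewicz gradient inequality for real analytic functions on $\R^{\dim K}$, and a Taylor-type transfer estimate that compares a general point $x\in U$ with a point lying on the reduced ``critical graph''. Since $\mathcal{L}: V \to W$ is Fredholm of index zero, $K$ is finite-dimensional and $R := \mathcal{L}(V) \subset W$ is closed with $\dim W/R = \dim K$. I would fix a closed complement $V_1 \subset V$ with $V = K \oplus V_1$ and a finite-dimensional complement $W_0 \subset W$ with $W = W_0 \oplus R$, where --- and this is the only place the Banach (as opposed to Hilbert) setting matters --- the choice of $W_0$ and $V_1$ has to be made via the definite pairing $j = T \circ T'$, so that $W_0$ is in non-degenerate duality with $K$ through $T: W \hookrightarrow V'$ and $V_1$ is a $j$-complement of $K$. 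With $P_R: W \to R$ the projection along $W_0$, the restriction $P_R \mathcal{L}|_{V_1}: V_1 \to R$ is then a bounded isomorphism.

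\textbf{Finite-dimensional reduction.} I would next solve $P_R \mathcal{M}(x_\infty + k + v_1) = 0$ for $v_1$ as an analytic function of $k$: setting $F(k,v_1) := P_R \mathcal{M}(x_\infty + k + v_1)$, analyticity of $\mathcal{M}$ into $W$ makes $F$ analytic, $F(0,0) = 0$, and $\partial_{v_1}F(0,0) = P_R\mathcal{L}|_{V_1}$ is an isomorphism. Corollary \ref{AnalyticVersionOfImplicitFuntion} then yields an analytic $g: B_\eta(0) \subset K \to V_1$ with $g(0) = 0$, $g'(0) = 0$, and $P_R \mathcal{M}(x_\infty + k + g(k)) \equiv 0$. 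The reduced energy $e(k) := E(x_\infty + k + g(k))$ is real analytic on $B_\eta(0)$ (because $E$ itself is analytic: $E(x)-E(x_\infty)=\int_0^1 \mathcal{M}(x_\infty+s(x-x_\infty))(x-x_\infty)\,ds$ is analytic as a primitive of the analytic operator $\mathcal{M}$). Differentiating $e$ and using the vanishing of $P_R\mathcal{M}$ along the graph of $g$ identifies $\nabla e(k) \in K^*$ with the restriction to $K$ of $P_0 \mathcal{M}(x_\infty + k + g(k)) \in W_0 \hookrightarrow V'$, producing
\begin{equation*}
\|\nabla e(k)\|_{K^*} \le C_1 \, \|\mathcal{M}(x_\infty + k + g(k))\|_W.
\end{equation*}
The classical Łojasiewicz gradient inequality on $K \cong \R^{\dim K}$ then yields $\theta \in (0, 1/2]$ and $C_0 > 0$ with $|e(k) - e(0)|^{1-\theta} \le C_0 \|\nabla e(k)\|_{K^*}$ for $k$ in a smaller ball.

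\textbf{Transfer to a general point and conclusion.} For a general $x \in U$ with $\|x - x_\infty\|_V \le \sigma$ I would decompose $x - x_\infty = k + v_1 \in K\oplus V_1$ and set $y := x_\infty + k + g(k)$. Applying $P_R$ to $\mathcal{M}(x) - \mathcal{M}(y) = (P_R\mathcal{L}|_{V_1})(v_1 - g(k)) + O\bigl(\|v_1 - g(k)\|_V^2 + \|k\|_K\|v_1 - g(k)\|_V\bigr)$ and using $P_R\mathcal{M}(y) = 0$ gives, after inverting the isomorphism $P_R\mathcal{L}|_{V_1}$ and absorbing the nonlinear terms into the left-hand side, $\|v_1 - g(k)\|_V \le C_2 \|\mathcal{M}(x)\|_W$ for $\sigma$ small. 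A first-order Taylor expansion of $E$ along the segment from $y$ to $x$, together with the embedding $W \hookrightarrow V'$, then produces
\begin{equation*}
|E(x) - E(y)| \le C_3 \bigl(\|\mathcal{M}(x)\|_W + \|v_1 - g(k)\|_V\bigr) \|v_1 - g(k)\|_V \le C_4 \|\mathcal{M}(x)\|_W^2.
\end{equation*}
Since $1 - \theta \ge 1/2$, this gives $|E(x) - E(y)|^{1-\theta} \le C \|\mathcal{M}(x)\|_W$ after possibly shrinking $\sigma$. Combining with $|e(k) - e(0)|^{1-\theta} \le C_0 C_1 \|\mathcal{M}(y)\|_W \le C\|\mathcal{M}(x)\|_W$ (by Lipschitz continuity of $\mathcal{M}: V \to W$ and the transfer bound on $\|v_1 - g(k)\|_V$) and the subadditivity $(a+b)^{1-\theta} \le a^{1-\theta} + b^{1-\theta}$ for $1-\theta \in (0,1)$ delivers \eqref{EquationLSIFeehanMaridakis}.

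\textbf{Main obstacle.} The genuine difficulty is purely Banach-theoretic and lies in the compatibility of the Lyapunov--Schmidt projections with the $W$-norm appearing on the right-hand side of the target inequality. In the Hilbert-space case the projections come for free from orthogonality, but here the definite pairing $j$ and both embeddings $T'$, $T$ must be combined to build $P_K$ on $V$ and $P_0$ on $W$ so that $\nabla e(k) \in K^*$ is genuinely controlled by $\|\mathcal{M}\|_W$ --- not merely by $\|\mathcal{M}\|_{V'}$ --- for the finite-dimensional Łojasiewicz step to give the correct norm on the right of \eqref{EquationLSIFeehanMaridakis}. Once this linear-algebraic set-up is correctly aligned with the definite pairing, the analytic implicit function theorem and the classical finite-dimensional Łojasiewicz inequality take over mechanically.
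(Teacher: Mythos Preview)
The paper does not give its own proof of this proposition: it is quoted as a result of Feehan and Maridakis (Theorem~2 in \cite{FeehannMaridakisLojasiewiczSimon}) and used as a black box. So there is no ``paper's proof'' to compare against here.

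That said, your sketch is the standard Lyapunov--Schmidt route that underlies both Chill's original argument and the Feehan--Maridakis version, and the outline is correct. The Fredholm-index-zero hypothesis gives you the finite-dimensional kernel $K$ and the closed range $R$ with $\dim(W/R)=\dim K$; the analytic implicit function theorem produces the graph $k\mapsto g(k)$ on which $P_R\mathcal{M}$ vanishes; the classical finite-dimensional \L ojasiewicz inequality handles the reduced functional $e(k)$; and your transfer estimates back to a general point are the right ones, including the observation that $|E(x)-E(y)|\le C\|\mathcal{M}(x)\|_W^2$ combined with $1-\theta\ge 1/2$. You have also correctly isolated the one non-Hilbert subtlety: the complements $V_1$ and $W_0$ must be chosen compatibly with the definite pairing $j$ so that $P_0\mathcal{M}(y)$, restricted to $K$, genuinely controls $\nabla e(k)$ in the $W$-norm rather than only in $V'$. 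This is exactly the point Feehan--Maridakis address with their ``gradient map'' hypothesis, and once that alignment is in place the rest is mechanical, as you say.
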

\begin{remark}[$\mathcal{M}$ and $\mathcal{L}$ as maps with values in $W$]\label{RemarkMandLasMappingswithvaluesinW}\ \\
	We want to note that if we consider $\mathcal{M}$ and $\mathcal{L}$ as maps with values in $W$ this is not completely precise. Formally, we are considering $T^{-1}\circ\mathcal{M}$ and $T^{-1}\circ\mathcal{L}$. When we introduce our setting later we will explain how the operators should be interpreted.
\end{remark}
So, the only prerequisite we need, to use the standard procedure, is definiteness of the embedding $V\rightharpoonup V'$ induced by the choice of the embedding $W\rightharpoonup V'$ in \cite[Hypothesis 3.4i)]{Chill2003OnTheLojasiewiczSimon}. In our situation - and we expect this to be true for any application, that is motivated by a gradient flow structure - the embedding is induced by a inner product and in this case definiteness of the embedding follows from definiteness of the product.\\
Now we want to discuss how to choose the spaces $V$ and $W$. For the application of the LSI to prove stability we need $W$ to be corresponding to the gradient flow setting. In our situation, we have an $H^{-1}$-gradient. Recall that for a Hilbert space $H$ and a linear map $F: H\to\R$, we have for any $x\in H$ that 
\begin{align*}
\nabla_H F(x)\in H,\quad F'(x)\in H'.
\end{align*}
From this we conclude that $\mathcal{M}(x)$ needs to be in the dual space of $H^{-1}$ that is by reflexivity of the Sobolev spaces just $H^1$. Therefore, we expect $W$ to be $H^1$. As we need $\mathcal{L}(0)$ to be a Fredholm operator the space $V$ is induced naturally by $W$ and the order of $\mathcal{L}(0)$. Up to lower order perturbation the second variation of the surface are is given by the Laplacian and so the expected space for $V$ is $H^3$. We want to note that there are also other possible choices. For example, one could also work with $V=H^1$ and $W=H^{-1}$. This is linked to the $\mathcal{H}^{-1}$-gradient flow formulation of the surface diffusion flow and so in some sense it is more natural. But then we get in higher space dimensions surfaces that are only varifolds which brings new difficulties.\\
So far we have not used the fact that \cite{Chill2003OnTheLojasiewiczSimon} is a general result in a Banach space setting. But as we just mentioned regularity of the surfaces is a more critical issue compared to the situation of the Willmore flow. There, the operator $\mathcal{L}$ is a fourth order differential operator and as the Willmore flow is a $L^2$-gradient flow this yields $V=H^4$ and thus $W=L^2$. Additionally, the works cited above restrict to curves and surfaces and for this space dimensions the involved Sobolev spaces will have Banach algebra structure and embed into $C^2$. As we want to work in arbitrary space dimensions we cannot guarantee this for $H^3$ any more and so we have to do a modification. Our first idea hereby was to choose the setting $V=W^{3,p}$ and $W=W^{1,p}$ for sufficiently large  $p$ such that the two properties mentioned hold for $V$. As we worked for our short time existence in a H\"older setting no additional regularity theory is needed. In the end this method has a problem in the application. Although we can prove a LSI in this setting yields an estimate by the $L^p$-norm of $\nabla_{\Gamma}H_{\Gamma}$. But we need an estimate by its $L^2$-norm as this is the quantity associated to the gradient flow structure of the surface diffusion flow\footnote{Recall that $\|\nabla_{\Gamma}H_{\Gamma}\|_{L^2}=\|-\Delta_{\Gamma}H_{\Gamma}\|_{\mathcal{H}^{-1}}$!}. This forces us to use interpolation arguments. Due to the H\"older-inequality we have for any $q>p$ that
\begin{align}
\|\nabla_{\Gamma}H_{\Gamma}\|_{L^p(\Gamma)}\le \|\nabla_{\Gamma}H_{\Gamma}\|_{L^2(\Gamma)}^{1-\bar{\theta}}\cdot \|\nabla_{\Gamma}H_{\Gamma}\|_{L^q(\Gamma)}^{\bar{\theta}},
\end{align}  
where the interpolation exponent $\bar{\theta}$ is given by
\begin{align}
\frac{1}{p}=\frac{1-\bar{\theta}}{2}+\frac{\bar{\theta}}{q}.
\end{align}
In principal, this would lead to the desired bound as we have bounds for $\|\nabla_{\Gamma}H_{\Gamma}\|_{L^q(\Gamma)}^{\bar{\theta}}$ due to parabolic smoothing for the surface diffusion flow. But we see that as $q\to\infty$ we get
\begin{align}
1-\bar{\theta}\to\frac{2}{p}.
\end{align}
That means that we have an upper bound for $1-\bar{\theta}$. But if we apply this interpolation procedure on the right-hand side of \eqref{EquationLSIFeehanMaridakis} this increases the \L ojasiewicz-Simon exponent by $\frac{1}{1-\bar{\theta}}$ and so by $\frac{p}{2}$. We then cannot guarantee that the LSI exponent is less than one. But we need this property to apply our stability argument and so this is not a possible solution. Using embedding theory for Besov spaces we were at least able to work with submanifolds in $\R^3$. But as we want to have results for any space dimension we have to find a new approach.\\
We then decided not to modify integrability but differentiability. The advantage in this approach is that on the scale of differentiability the interpolation exponent will get as good as we need it as long we can guarantee parabolic regularization and $C^k$-estimates for arbitrary large $k$, which again follow from parabolic smoothing properties of our flow.  \\
One might now argue that one does not need a Banach space setting after all but we want to note the only problem in our situation are the interpolation properties of Sobolev spaces. There might be other situations where this idea is useful.\\
These two problems are general problems when working with the surface diffusion flow in higher space dimensions. They also appear in the study of closed hypersurfaces, cf. \cite[Chapter 5]{goesswein2019Dissertation}. Now, we want to discuss additional problems induced by the existence of a boundary. Firstly, we will need to include some boundary conditions to guarantee the Fredholm property of the second derivative of the surface energy which is again the surface Laplacian. This conditions should be connected to our flow as for the stability argument itself the solution $\boldsymbol{\rho}$ of (SDFTJ) should be an admissible function. As we have a second order differential operator we can only allow for three conditions and the natural choice are the angle conditions and the concurrency of the triple junction. \\
The application of the \L ojasiewicz-Simon technique on the situation with non-linear boundary conditions was - according to our knowledge - not discussed, yet. This situation is much more complicated than linear boundary conditions, that, e.g. were discussed in \cite{abelswilke2007Convergencetoequilibiriumfor} or \cite{dallacquapozzispener2016lojasiewichsimongradientclampedcurves} as there one can just write these conditions into the space $V$. Our first idea to overcome this was to include these non-linear boundary conditions in $V$ by parametrizing them over the linearized boundary conditions. This is actually possible and we will use this during the proof of Lemma \ref{LemmaHigherTimeRegularity}. Also, we can include this in the parametrization of the non-linear volume constraints we will need in the situation of double-bubbles to. But then, we will get in the first derivative of the surface energy terms arising due to the parametrization of the non-linear boundary conditions. These terms cannot be fitted in our setting. So we have to find a different approach. The new idea is to write the boundary conditions in the energy $\widetilde{E}$ itself. The consequence of this is that the space $W$ needs to have some trace parts for this boundary conditions. At first glance, that is bad for our stability analysis later as there these trace parts do not appear in the energy. But as the solutions of (SDFTJ) fulfill these boundary conditions, these terms vanish in the stability analysis itself. 
\\
The second technical difficulty is the tangential part $\boldsymbol{\mu}$. In \eqref{EquationFirstDerivativeEtildetriplejunction} we see that in the first derivative of the surface energy at a point $\overline{\brho}\in V$ we get a term of the form
\begin{align}\label{EquationBadnassofnonlocalTangentialparts}
\sum_{i=1}^3\int_{\Gamma^i_{\ast}}\mu^i(\brho)g(\overline{\brho})\dH^n,
\end{align}
with some function $g$ in $\overline{\brho}$ and $\brho\in V$. To apply our method, we need \eqref{EquationBadnassofnonlocalTangentialparts} to be of the form
\begin{align}\label{EquationBadnessofnonlocaltparts2}
\sum_{i=1}^3\int_{\Gammaast^i}\rho^i\overline{g}(\overline{\brho})\dH^n,
\end{align}
with another function $\overline{g}$ in $\overline{\brho}$. Observe now that if $\brho$ vanishes on $\Sigmaast$ we will have $\mu^i(\brho)\equiv 0$ on $\Gamma$ and thus \eqref{EquationBadnassofnonlocalTangentialparts} vanishes as well. Then, applying the fundamental lemma of calculus of variations on \eqref{EquationBadnessofnonlocaltparts2} we see that 
\begin{align}
\overline{g}\equiv 0.
\end{align}
But as the functional corresponding to \eqref{EquationBadnassofnonlocalTangentialparts} is not the zero functional, we cannot write \eqref{EquationBadnassofnonlocalTangentialparts} in the formulation \eqref{EquationBadnessofnonlocaltparts2}. Thus, the non-local tangential part cannot be dealt with our LSI methods and so the only possible solution is to replace the tangential part by a local version that describes the same set of triple junction manifolds as long as $\boldsymbol{\rho}$ is small enough. 
\begin{remark}[Non-linear tangential parts]\ \\
	Although we did not study what happens in the case of non-linear tangential parts, we expect that these should in general not be a problem. One only needs to guarantee that their linearization fits in the chosen setting for $W\hookrightarrow V^{\ast}$ for the analyticity of the first derivative. The evaluation of the second derivative at the reference frame normally depends only on the evolution in normal direction and so the tangential part does not matter at all.
\end{remark}
\begin{remark}[The situation for the volume preserving mean curvature flow]\ \\
	Although we do not discuss it here we want to mention that our work basically shows also stability for the case of volume preserving mean curvature flow. One can use the same LSI and only has to see that one also has parabolic regularization for this flow which we expect to be true.
\end{remark}
\section{Parabolic Smoothing near Stationary Reference Frames}\label{SectionParabolicSmoothing}

An important tool in the estimates for the stability analysis will be that we have parabolic smoothing for the solution found in Theorem \ref{TheoremSTETripleJunctions}. Hereby, the usual idea is to use the found solution to write the coefficient functions as fixed functions. This yields then a linear problem on which one could apply again arguments similar to the linear analysis in \cite{garckegoesweinpreprintshorttimeexistenceSDFTJ}. Unfortunately, the fully non-linear angle conditions prevent us from doing this. As the coefficient functions are of the same order as the boundary itself they will not have enough regularity to derive higher regularity.\\
Therefore, we will use the parameter trick instead. We want to note that we only can prove parabolic smoothing in the case of reference frames that are stationary solutions. The reason for this is due to the fact that our linear analysis in \cite{garckegoesweinpreprintshorttimeexistenceSDFTJ} only considers linearization in the reference frame, not in a general solutions. In future, we hope to generalize this result to arbitrary reference frames. But as we consider in our stability analysis only standard double bubbles as reference frames this result will be enough for the work in this article.\\
The strategy of the proof splits into three steps. We will first use the parameter trick to show that away from $t=0$ the time derivative $\partial_t\brho$ inherits the space regularity of $\brho$. From this we will get that the space regularity of $\brho$ is increased by four orders. Finally, we can start a bootstrap procedure using the regularity we already have for $\brho$ from Theorem \ref{TheoremSTETripleJunctions}.
\begin{proposition}[Higher time regularity of solutions of (SDFTJ) near stationary double bubbles]\label{LemmaHigherTimeRegularity}  
	Let $k\in\N_{\ge 4}, \alpha\in (0,1), T>0, t_k\in (0,T]$. There are $\varepsilon_k,C_k>0$ with the following property. For any initial data $\brho_0\in C^{4+\alpha}_{TJ}(\Gammaast)$ with $\|\brho_0\|\le \varepsilon_k$ such that the solution $\brho$ of (\ref{EquationSurfaceDiffusionTripleJunctionGeometricVersiononRrenceFrame}) fulfills 
	\begin{align}\label{EquationRandomCondition}
	\brho\in C^{4+\alpha,1+\frac{\alpha}{4}}_{TJ}(\GammaastT)\cap C^{k+\alpha,0}_{TJ}(\Gammaast\times[t_k,T]),
	\end{align}
	we have the increased time regularity 
	\begin{align}
	\partial_t\brho \in C^{k+\alpha,0}_{TJ}(\Gammaast\times[t_k,T])
	\end{align}
	and we have the estimate
	\begin{align}\label{EquationHihgerRegulartiyforTimeDerivativeTripleJunctions}
	\|\partial_t\brho\|_{C^{k+\alpha,0}_{TJ}(\Gammaast\times[t_k,T])}\le \frac{C_k}{t_k}\|\brho_0\|_{C^{4+\alpha}_{TJ}(\Gammaast)}.
	\end{align}
\end{proposition}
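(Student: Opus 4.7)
The plan is to use the parameter trick based on time rescaling together with the analytic implicit function theorem (Corollary~\ref{AnalyticVersionOfImplicitFuntion}). Fix $t_0 := t_k/4 \in (0, t_k)$, and for $\lambda$ in a small one-sided neighborhood of $1$ introduce the rescaled family
\begin{equation*}
\brho_\lambda(\sigma, t) := \brho\bigl(\sigma, t_0 + \lambda(t - t_0)\bigr), \qquad (\sigma, t) \in \Gammaast \times [t_0, T].
\end{equation*}
A direct computation shows that $\brho_\lambda$ solves the parametric system
\begin{equation*}
\partial_t \brho_\lambda = \lambda\, \mathcal{K}(\brho_\lambda, \brho_\lambda|_{\Sigmaast}), \qquad \mathcal{G}(\brho_\lambda) = 0, \qquad \brho_\lambda|_{t=t_0} = \brho(t_0),
\end{equation*}
with $\brho_1 = \brho$. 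Because $\brho(t_0)$ is a value of the original solution, the compatibility conditions (ACC) at $t = t_0$ are satisfied independently of $\lambda$.

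Next, set $X_k := C^{k+\alpha, (k+\alpha)/4}_{TJ}(\Gammaast \times [t_0, T])$ and, on an open neighborhood of $(\brho, 1) \in X_k \times \R$, define
\begin{equation*}
F(\tilde{\brho}, \lambda) := \bigl(\partial_t \tilde{\brho} - \lambda\, \mathcal{K}(\tilde{\brho}, \tilde{\brho}|_{\Sigmaast}),\ \mathcal{G}(\tilde{\brho}),\ \tilde{\brho}|_{t=t_0} - \brho(t_0)\bigr)
\end{equation*}
with values in the natural $k$-level maximal-regularity target. The building blocks of $\mathcal{K}$ and $\mathcal{G}$ (metric tensor, inverse metric, unit normal, mean curvature, Laplace--Beltrami operator) are analytic functions of $\tilde{\brho}$ and its derivatives on a neighborhood of $\tilde{\brho} = 0$, so $F$ is analytic in both arguments. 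The Fr\'echet derivative $\partial_{\tilde{\brho}}F(\brho, 1)$ is the linearization of (SDFTJ) around $\brho$, which is a small perturbation of the linearization at the stationary reference $\Gammaast$ thanks to the a priori bound in Theorem~\ref{TheoremSTETripleJunctions}. Since the reference has smooth, time-independent coefficients, the linear parabolic theory of \cite{garckegoesweinpreprintshorttimeexistenceSDFTJ} (originally established at the $C^{4+\alpha, 1+\alpha/4}$-level) lifts to the $X_k$-scale, and a standard perturbation argument yields bijectivity of $\partial_{\tilde{\brho}}F(\brho,1)$. Corollary~\ref{AnalyticVersionOfImplicitFuntion} then provides an analytic map $\lambda \mapsto \brho_\lambda \in X_k$ on a neighborhood of $\lambda = 1$.

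Differentiating the explicit formula for $\brho_\lambda$ in $\lambda$ at $\lambda = 1$ yields
\begin{equation*}
\partial_\lambda \brho_\lambda\big|_{\lambda = 1}(\sigma, t) = (t - t_0)\, \partial_t \brho(\sigma, t),
\end{equation*}
and the left-hand side lies in $X_k$ by the IFT. Since $t - t_0 \ge \tfrac{3}{4} t_k$ on $[t_k, T]$, we may divide to conclude $\partial_t \brho \in C^{k+\alpha, (k+\alpha)/4}_{TJ}(\Gammaast \times [t_k, T]) \hookrightarrow C^{k+\alpha, 0}_{TJ}(\Gammaast \times [t_k, T])$, with the factor $1/t_k$ entering naturally. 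The quantitative bound \eqref{EquationHihgerRegulartiyforTimeDerivativeTripleJunctions} follows by tracking the continuous dependence in the IFT and using the a priori estimate on $\|\brho\|$ from Theorem~\ref{TheoremSTETripleJunctions}. The principal obstacle is establishing bijectivity of $\partial_{\tilde{\brho}}F(\brho, 1)$ at the $X_k$-level for general $k > 4$: this is precisely where the assumption that $\Gammaast$ is stationary enters, as it ensures that the unperturbed coefficients are smooth and time-independent, which is what lets the baseline $k=4$ linear theory be bootstrapped to arbitrary $k$ without losing temporal H\"older regularity.
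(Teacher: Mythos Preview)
Your overall strategy---the parameter trick via time rescaling combined with the implicit function theorem---matches the paper's. The gap is in where you linearize and in your choice of working space.

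You apply the IFT at $(\brho,1)$ in $X_k = C^{k+\alpha,(k+\alpha)/4}_{TJ}(\Gammaast\times[t_0,T])$ and claim bijectivity of $\partial_{\tilde{\brho}}F(\brho,1)$ by a perturbation argument. Two problems arise. First, $\brho$ is not known to lie in your $X_k$: the hypothesis gives only $C^{4+\alpha,1+\alpha/4}$ on $[0,T]$ and $C^{k+\alpha,0}$ on $[t_k,T]$, whereas you need full parabolic $C^{k+\alpha,(k+\alpha)/4}$ regularity starting already at $t_0=t_k/4<t_k$. Second, and more seriously, the perturbation argument requires the coefficients of the linearized operator at $\brho$---which involve derivatives of $\brho$ up to order four, and up to order one through the fully nonlinear angle conditions---to be close to those at $0$ in a $C^{k+\alpha}$-compatible norm. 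The only smallness available is $\|\brho_0\|_{C^{4+\alpha}}\le\varepsilon_k$ and the uniform $C^{4+\alpha,1+\alpha/4}$-bound from Theorem~\ref{TheoremSTETripleJunctions}; there is no smallness of $\brho$ in any higher norm. This is exactly the obstruction flagged at the start of Section~\ref{SectionParabolicSmoothing}: for fully nonlinear boundary operators the coefficient regularity equals the solution regularity, so linearizing at the solution does not gain anything.

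The paper avoids this by (i) first parametrizing the nonlinear boundary and compatibility conditions over the linear ones via an auxiliary IFT (the map $\bar\gamma$ and Lemma~\ref{LemmaExistenceofComplementarySpaceXk}), so that the domain space $X_k=C^{4+\alpha,1+\alpha/4}_{TJ,LCC}(\GammaastT)\cap C^{k+\alpha,0}_{TJ}(\Gammaast\times[t_k,T])$ carries only linear constraints and matches the hypothesis exactly; and (ii) applying the parameter-trick IFT at $(\lambda,\bu_0,\bu)=(1,0,0)$, where the linearization $(\bu|_{t=0},(\partial_t-\mathcal{A}_{all})\bu)$ has smooth, $\brho$-independent coefficients and is bijective directly by the linear theory. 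The actual solution enters only through the initial-data variable $\bu_0$, and the time-rescaled family $\brho_{\brho_0,\lambda}$ is identified with $\bar\gamma(\zeta(\lambda,\bu_0))$ by \emph{uniqueness} of the implicit function, not by linearizing around it.
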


\begin{remark}[Notation for higher regularity]\ \\
We want to note that we do some abuse of notation in \eqref{EquationRandomCondition}. Precisely, the space should be written as 
\begin{align*}
\left\{\brho\in C^{4+\alpha,1+\frac{\alpha}{4}}_{TJ}(\GammaastT) \big| \brho\big|_{[t_k,T]}\in C^{k+\alpha,0}_{TJ}(\Gammaast\times[t_k,T])\right\}.
\end{align*}
	In the following, every intersection of function spaces shall be understand in this way.
\end{remark}
\begin{proof}
	We first need to construct a parametrization of the non-linear boundary and compatibility conditions over the linear ones. For this we consider the spaces
	\begin{align}
	C^{4+\alpha,1+\frac{\alpha}{4}}_{TJ,LCC}(\GammaastT)&:=\left\{\brho\in C^{4+\alpha,1+\frac{\alpha}{4}}_{TJ}(\GammaastT)\Big|\ \mathcal{B}(\brho)\equiv 0,\ \mathcal{B}_0(\brho)\equiv 0\text{ on }\Sigmaast\right\},\\
	X_k&:=C^{4+\alpha,1+\frac{\alpha}{4}}_{TJ,LCC}(\GammaastT)\cap C^{k+\alpha,0}_{TJ}(\Gammaast\times[t_k,T]),\\
	Y^1&:=C^{\alpha}_{TJ}(\Sigmaast),\\
	Y^2_k&:=\Pi_{i=1}^6\left(C^{4+\alpha-\sigma_i,\frac{4+\alpha-\sigma_i}{4}}(\SigmaastT)\cap C^{k+\alpha,0}(\Sigmaast\times[t_k,T])\right).
	\end{align}
	Hereby, $\mathcal{B}$ denotes the linearized boundary operator, cf. \cite[Section 4]{garckegoesweinpreprintshorttimeexistenceSDFTJ}, and the sum condition in the first space is linked to the linearized compatibility conditions, cf. \cite[Equation (36)]{garckegoesweinpreprintshorttimeexistenceSDFTJ}. Additionally, the $\sigma_i$ denotes as before the order of the boundary conditions, that is,
	\begin{align}\label{EquationDefinitionofSigma}
	\sigma_1=0,\ \sigma_2=\sigma_3=1,\ \sigma_4=2,\ \sigma_5=\sigma_6=3.
	\end{align} 
	Now we consider on these spaces the operator 
	\begin{align*}
	\widetilde{G}: X_k\oplus Z_k&\to Y^1\times Y^2_k,\\
	(\boldsymbol{u}, \bar{\boldsymbol{u}})&\mapsto \left(\mathcal{G}_0((\boldsymbol{u}+\bar{\boldsymbol{u}})\big|_{t=0}), \mathcal{G}(\boldsymbol{u}+\bar{\boldsymbol{u}})\right).
	\end{align*}
	Hereby, $\mathcal{G}$ denotes the non-linear boundary operator and $\mathcal{G}_0$ is corresponds to the non-linear compatibility conditions, see \eqref{EquationCompabilityConditionsAnalyticVersionofSFDTJ}. $Z_k$ is a suitable complementary space of $X_k$ which we want to construct in the following. As we want to apply the inverse function theorem on $G$ we want to have that $\partial_2\widetilde{G}$ is a bijective mapping $Z_k\to Y^1\times Y^2_k$. Thus, the space $Z_k$ needs to contain exactly one representative for every possible value of the $\partial_2\widetilde{G}$, which is given by the linearized boundary and compatibility operator. Note that actually every complementary space $Z_k$ of $X_k$ will have this property. If $x,y\in Z_k$ fulfil $\partial_2\widetilde{G}(x)=\partial_2\widetilde{G}(y)$ then their difference is in $\ker(\partial_2\widetilde{G})$ and thus by construction in $X_k$. This implies now that $x-y=0$. \\
	For better readability we will write the construction of the space $Z_k$ in the following lemma.
	\begin{lemma}[Existence of a complementary space of $X_k$]\label{LemmaExistenceofComplementarySpaceXk}\ \\ There is a closed complementary space of $X_k$ in $C^{4+\alpha,1+\frac{\alpha}{4}}_{TJ}(\GammaastT)\cap C^{k+\alpha,0}_{TJ}(\Gammaast\times[t_k,T])$.
	\end{lemma}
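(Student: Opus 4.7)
The plan is to realize $X_k$ as the kernel of a bounded linear operator and then produce a continuous right inverse, whose image gives the desired complement. Set
$$
W_k := C^{4+\alpha,1+\frac{\alpha}{4}}_{TJ}(\GammaastT)\cap C^{k+\alpha,0}_{TJ}(\Gammaast\times[t_k,T]),
$$
and consider
$$
L\colon W_k\to Y^1\times Y^2_k,\qquad L(\brho) := \bigl(\mathcal{B}_0(\brho|_{t=0}),\,\mathcal{B}(\brho)\bigr).
$$
This map is bounded by Remark \ref{RemarkTracesParabolicHspaces} together with the mapping properties of $\mathcal{B}$ and $\mathcal{B}_0$ from \cite{garckegoesweinpreprintshorttimeexistenceSDFTJ}, and by construction $X_k = \ker L$. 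Once one has a bounded right inverse $R$ of $L$, the operator $\operatorname{id}_{W_k}-R\circ L$ is a continuous projection onto $X_k$, so $Z_k := R(Y^1\times Y^2_k)$ is automatically closed and satisfies $W_k = X_k\oplus Z_k$. The proof therefore reduces to constructing such an $R$.

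To build $R$ I would split it into two contributions. For the boundary datum $\mathbf{g}\in Y^2_k$, work in tubular coordinates $(s,y)\in[0,\delta)\times\Sigmaast$ in a neighborhood of the triple junction in each $\Gammaast^i$. The principal part of $\mathcal{B}$ in these coordinates prescribes linear combinations of normal traces $\partial_s^{\sigma_i}\brho|_{s=0}$ with orders $\sigma_i$ as in \eqref{EquationDefinitionofSigma}, and the Lopatinskii-type invertibility of the resulting $6\times 6$ symbol was already verified as part of the short-time existence analysis. Inverting the symbol pointwise on $\Sigmaast\times[0,T]$ yields coefficients $a^i_j(y,t)$ inheriting the joint regularity of $\mathbf{g}$, and the Whitney-type polynomial extension
$$
R_1(\mathbf{g})^i(s,y,t) := \chi(s)\sum_{j=0}^{3}\frac{s^j}{j!}\,a^i_j(y,t),
$$
with a fixed smooth cutoff $\chi$ supported near $s=0$, satisfies $\mathcal{B}(R_1(\mathbf{g})) = \mathbf{g}$ and lies in $W_k$ with norm controlled by $\|\mathbf{g}\|_{Y^2_k}$, by Lemma \ref{LemmaProductEstimatesParabolichHspace}. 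For the initial-time compatibility datum $g_0\in Y^1$, I would solve a stationary fourth-order boundary value problem on $\Gammaast$ for $\tilde{\brho}_0\in\ker\mathcal{B}\cap C^{4+\alpha}_{TJ}(\Gammaast)$ with $\mathcal{B}_0(\tilde{\brho}_0) = g_0 - \mathcal{B}_0(R_1(\mathbf{g})|_{t=0})$, which is solvable in the $C^{4+\alpha}$-class by Schauder theory (using the same invertibility), and then multiply by a smooth time cutoff $\eta$ with $\eta(0)=1$ and $\operatorname{supp}\eta\subset[0,t_k/2)$. Since $\mathcal{B}$ is a purely spatial operator, $\mathcal{B}(\eta\tilde{\brho}_0)=\eta\mathcal{B}(\tilde{\brho}_0)=0$, and $\eta(0)=1$ recovers the correct value at $t=0$, so
$$
R(g_0,\mathbf{g}) := R_1(\mathbf{g}) + \eta(t)\tilde{\brho}_0
$$
is the required bounded right inverse.

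The main obstacle is ensuring that both summands respect the full norm of $W_k$ simultaneously: the time-dependent lift $R_1(\mathbf{g})$ must keep the elevated spatial regularity on $[t_k,T]$, which is automatic because the coefficients $a^i_j$ inherit this regularity from $\mathbf{g}$ and we only multiply by a fixed smooth cutoff in $s$; and the stationary lift $\eta(t)\tilde{\brho}_0$ must not disturb that same regularity, which is automatic because $\operatorname{supp}\eta\subset[0,t_k/2)$ makes this summand vanish on $[t_k,T]$. These two competing demands on one and the same lift are precisely what forces the splitting into $R_1$ and $\eta\tilde{\brho}_0$, and once the splitting is in place every remaining step reduces to the product estimates of Lemma \ref{LemmaProductEstimatesParabolichHspace}.
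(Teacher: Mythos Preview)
Your overall strategy---realize $X_k$ as $\ker L$ for a bounded linear map and then build a bounded right inverse $R$ so that $Z_k=\operatorname{im}R$ is the closed complement---is sound and is a clean way to organize the argument. The difficulty is in your concrete formula for $R_1$.

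The explicit Taylor lift
\[
R_1(\mathbf g)^i(s,y,t)=\chi(s)\sum_{j=0}^{3}\frac{s^j}{j!}\,a^i_j(y,t)
\]
does \emph{not} land in $W_k$. The datum $g^l$ attached to a boundary operator of order $\sigma_l$ lives only in $C^{4+\alpha-\sigma_l,\frac{4+\alpha-\sigma_l}{4}}$ in the tangential variable $y$, so the coefficient $a^i_j$ you extract is at best $C^{4+\alpha-j}$ in $y$. But then $s^j a^i_j(y,t)$ is not $C^{4+\alpha}$ jointly in $(s,y)$: already for $j=3$ one would need $\partial_y^2 a^i_3$, which does not exist. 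The trace map $C^{4+\alpha,1+\frac{\alpha}{4}}(\GammaastT)\to \prod_{j}C^{4+\alpha-j,\frac{4+\alpha-j}{4}}(\SigmaastT)$ does admit a bounded right inverse, but its construction requires an anisotropic smoothing in the normal direction, not a bare polynomial. Your appeal to the Lopatinskii--Shapiro condition is also misplaced: that $6\times6$ matrix acts on the six decaying exponential modes of the frozen fourth-order ODE system, not on polynomial jets, and its invertibility says nothing about whether your ansatz realizes the prescribed boundary values. A related point is that $\mathcal B^4,\mathcal B^5,\mathcal B^6$ contain tangential derivatives of $\brho$, so the system $\mathcal B(R_1(\mathbf g))=\mathbf g$ is not pointwise algebraic in the $a^i_j$ but a triangular differential system in $y$; this can be solved order by order, but it is not ``inverting the symbol pointwise''.

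The paper avoids all of this by using a genuine solution operator instead of an explicit extension formula. For boundary data $\mathbf g$ vanishing at $t=0$ it applies the linear \emph{parabolic} solution operator of \cite[Theorem 5.1]{garckegoesweinpreprintshorttimeexistenceSDFTJ} with zero source and zero initial data, which by construction returns an element of $W_k$ whose $\mathcal B$-trace is exactly $\mathbf g$; the gain of higher spatial regularity on $[t_k,T]$ is obtained by a standard interior-in-time localization of the same linear estimate. For the initial compatibility datum it solves the elliptic fourth-order system $(\mathcal A_{all},\mathcal B)$ on $\Gammaast$ and extends the result constantly in time. Your time-cutoff $\eta$ for this second piece is actually a nice variant (it makes membership in $C^{k+\alpha,0}(\Gammaast\times[t_k,T])$ immediate), and your kernel/right-inverse packaging is cleaner than the paper's ad hoc verification of closedness. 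If you replace your explicit $R_1$ by either the parabolic solution operator or a correct anisotropic trace lift, the remainder of your argument goes through.
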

	\begin{proof} 
		We will construct $Z_k$ as sum of two spaces, each fulfilling one of the two objectives. First, consider
		\begin{align*}
		\bar{Z}_1^k:=\left\{\mathfrak{b}\in \Pi_{i=1}^6C^{4+\alpha-\sigma_i,(4+\alpha-\sigma_i)/4}(\GammaastT):\mathfrak{b}\big|_{t=0}=0\right\}\cap\Pi_{i=1}^6C^{k+\alpha-\sigma_i,0}(\Sigmaast\times[t_k,T]).
		\end{align*}
		We can now apply the continuous solution operator from \cite[Theorem 5.1]{garckegoesweinpreprintshorttimeexistenceSDFTJ} with $\bar{Z}_1^k$ to get a subspace of $C^{4+\alpha,\frac{1+\alpha}{4}}_{TJ}(\GammaastT)$. Additionally, for this problem we can apply a standard localization argument on $[t_k,T]$ to get that these functions are also in $C^{k+\alpha,0}(\Gammaast\times[t_k,T])$ with corresponding energy estimates. Therefore, we get a continuous operator on $\bar{Z}_1^k$ with values in $X_k$ and thus its image, which we will call $Z_1^k$ is a closed subspace and therefore a Banach space. \\
		With the elements of $Z_1^k$ we can adjust the values of the linearized boundary operator away from $t=0$. Now we need another space to control the boundary values at $t=0$ and the compatibility condition. For this we consider the space 
		\begin{align*}
		\bar{Z}_2:=C^{\alpha}(\Sigmaast)\times \Pi_{i=1}^6 C^{4+\alpha-\sigma_i}(\Sigmaast).
		\end{align*}
		Now we construct for every $(\mathfrak{b}_0,\mathfrak{b})\in \bar{Z}_2$ the solution $\boldsymbol{u}_0$ of the elliptic system\footnote{As we can parametrize all $\Gamma^i$ over the same domain in $\R^n$ solvability of the system follows from \cite{agmondouglisNi1959estimates}. Note that the needed compatibility conditions were proven in \cite{garckegoesweinpreprintshorttimeexistenceSDFTJ}.}
		\begin{align*}
		-\sum_{i=1}^3\gamma^i\mathcal{A}^i_{all}\boldsymbol{u}_0&=\mathfrak{b}_0, & &\text{on }\Gammaast^i, i=1,2,3,\\
		\mathcal{B}(\boldsymbol{u}_0)&=\mathfrak{b}, & &\text{on }\Sigmaast.
		\end{align*}
		Hereby, we formally extend $\mathfrak{b}_0$ on $\Gammaast$. Now extending these functions constantly in time we get a set of functions in $C^{4+\alpha, 1+\frac{\alpha}{4}}_{TJ}(\GammaastT)$ which we call $Z_2$. Note that due to continuity of the solution operator of the elliptic problem the space $Z_2$ is closed. \\
		Now we can set $Z_k:=Z_1^k\times Z_2$. Obviously, we have $Z_1^k\cap Z_2=\{0\}$. Additionally, $Z$ is a closed subspace of $C^{4+\alpha, 1+\frac{\alpha}{4}}(\GammaastT)$ which we see by the following. Suppose that we have a convergent sequence \begin{align*}
		(z_n)_{n\in\N}=(z_n^1+z_n^2)_{n\in\N}\subset Z.
		\end{align*}
		Then $(z^1_n+z^2_n)\big|_{t=0}=z_n^2\big|_{t=0}$ converges in the $C^{4+\alpha}$-norm and as $Z_2$ is closed the (constantly in $t$ extended) limit $z^2$ is in $Z_2$. This implies that \begin{align*}
		(z_n^1)_{n\in\N}=(z_n^1+z_n^2)_{n\in\N}-(z_n^2)_{n\in\N} 
		\end{align*}
		converges also in the $C^{4+\alpha,1+\frac{\alpha}{4}}$-norm and as $Z_1$ is closed the limit $z^1$ is in $Z_1$. Hence, $Z$ is a closed subspace and thus a Banach space.
	\end{proof}
	We now continue the proof of Proposition \ref{LemmaHigherTimeRegularity}. Observe that $\partial_2\widetilde{G}(0,0)=(\mathfrak{B}_0,\mathfrak{B})$ and due to the construction of $Z$ this is now bijective. So, we can apply the implicit function theorem to get the existence of a unique function $\gamma$ defined on a neighborhood $U$ of $0$ with $\widetilde{G}(\boldsymbol{u},\gamma(\boldsymbol{u}))=0$.  The set 
	\begin{align} \label{EquationParametrizationBizzarr}
	\{\boldsymbol{u}+\gamma(\boldsymbol{u})|\boldsymbol{u}\in U\}
	\end{align}
	describes all functions fulfilling the non-linear boundary and compatibility conditions near $0$. In the following we will write $\bar{\gamma}:=\Id+\gamma$ and $P_{X_k}$ for the projection on $X_k$. For later use, we want to note that
	\begin{align}\label{EquationDerivategammainzero}
	\gamma'(0)\bu=0, \quad\forall \bu\in X_k.
	\end{align}
	To see this, we observe that due to $\tilde{G}(\gamma(\bu))=0$ in a neighborhood of $0$ we have that
	\begin{align}\label{EquationDeineMutter}
	0=\tilde{G}'(0)\bar{\gamma'(0)}\bu=\tilde{G}'(0)(\Id+\gamma'(0))\bu=\tilde{G}'(0)\bu+\tilde{G}'(0)\gamma'(0)\bu=\tilde{G}'(0)\gamma'(0)\bu.
	\end{align}
	In the last equality we used that $\bu$ is in the kernel of $\tilde{G}'(0)$. On the other hand, by construction of $\gamma$ we have that $\gamma'(0)\bu$ is in a complementary space of the kernel and so \eqref{EquationDeineMutter} implies directly \eqref{EquationDerivategammainzero}.
	\\ 
	Now, we are able to perform the main argument for the smoothing, i.e, the parameter trick argument. An example for this method can be found in \cite[Section 5.2]{pruesssimonett2016movinginterfacesandquasilinearparabolicevolutionequations}). In our situation, we have to replace $\brho$ by its parametrization $\bar{\gamma}(\bu)$ over the linearized boundary conditions .\\
	 For some small $0<\varepsilon<1$ we consider the map
	\begin{align*}
	G:(1-\varepsilon,1+\varepsilon)\times C^{4+\alpha}_{TJ,LCC}(\Gammaast)\times X_k&\to C^{4+\alpha}_{TJ,LCC}(\Gammaast)\times \widetilde{X}_k,\\
	(\lambda,\boldsymbol{u}_0, \boldsymbol{u})&\mapsto \left(\boldsymbol{u}\big|_{t=0}-\boldsymbol{u}_0,V_{\bar{\gamma}(\boldsymbol{u})}+\lambda\Delta_{\bar{\gamma}(\boldsymbol{u})}H_{\bar{\gamma}(\boldsymbol{u})}\right),
	\end{align*}
	where we use the spaces
	\begin{align*}
	C^{4+\alpha}_{TJ,LCC}(\Gammaast)&:=\left\{\bu\in C^{4+\alpha}_{TJ}(\Gammaast) | \mathcal{B}_0\bu\equiv 0, \mathcal{B}\bu\equiv 0\right\},\\
	\widetilde{X}_k&:=\left\{\boldsymbol{f}\in C^{\alpha,\frac{\alpha}{4}}_{TJ}(\GammaastT)\Bigg|\sum_{i=1}^3f^i\big|_{t=0}=0\text{ on }\SigmaastT\right\}\cap C^{k-4+\alpha,0}_{TJ}(\Gammaast\times[t_k,T]).
	\end{align*}
	Note that the second component $G_2$ indeed fulfils the compatibility condition of $\widetilde{X}_k$ as due to (\ref{EquationGeometricCompabilityConditionforSDFTJ}) we have that
	\begin{align*}
	\lambda\left(\Delta_{\overline{\gamma}(\bu)}^1H^1_{\overline{\gamma}(\bu)}+\Delta_{\overline{\gamma}(\bu)}^2H^2_{\overline{\gamma}(\bu)}+\Delta_{\overline{\gamma}(\bu)}^3H^3_{\overline{\gamma}(\bu)} \right)=0.
	\end{align*}
	The fact that the normal velocities sum up to zero was proven in the proof of \cite[Lemma 6.2]{garckegoesweinpreprintshorttimeexistenceSDFTJ}.  \\	
	Now, we want to check the prerequisites to apply the implicit function theorem. Observe that as $\Gammaast$ is a stationary solution of (SDFTJ) we have that $G(1,0,0)=0$. Furthermore, $G$ is an analytic operator as it can be written as the sum of products of linear, continuous maps and parabolic H\"older-spaces have a Banach algebra structure. Finally, we have for the partial Fr\'echet-derivative $\partial_{3}G(1,0,0)$ that
	\begin{align}
	\partial_3 G(1,0,0)\boldsymbol{u}=(\boldsymbol{u}|_{t=0}, (\partial_t-\mathcal{A}_{all})\boldsymbol{u}).
	\end{align} 
	Here, we used in the second component the chain rule and \eqref{EquationDerivategammainzero}.
	Due to the result\footnote{As mentioned in the proof of Lemma \ref{LemmaExistenceofComplementarySpaceXk} we use hereby, that for a linear problem we get higher regularity on $[t_k,T]$ using a standard localization argument.} from \cite[Theorem 5.1]{garckegoesweinpreprintshorttimeexistenceSDFTJ} we see that $\partial_{3}G(1,0,0)$ is bijective and then the implicit function theorem yields the existence of neighborhoods \begin{align}(1,0)\in U\subset (1-\varepsilon,1+\varepsilon)\times C^{4+\alpha}_{TJ,LCC}(\Gammaast)\end{align} and \begin{align}0\in V\subset C^{4+\alpha, 1+\frac{\alpha}{4}}_{TJ,LCC}(\GammaastT)\end{align} together with a unique, analytic function $\zeta: U\to V$ such that for all $(\lambda,\boldsymbol{u}_0)\in U$ we have that
	\begin{align}\label{EquationTimeSmoothnessHelpConstruction}
	G(\lambda, \boldsymbol{u}_0, \zeta(\lambda,\boldsymbol{u}_0))=0.
	\end{align}
	On the other hand, we may consider the solution of (SDFTJ) with initial data $\boldsymbol{\rho}_0$ denoted by $\boldsymbol{\rho}_{\boldsymbol{\rho}_0}$. Then the projection of the time-scaled function \begin{align}\boldsymbol{\rho}_{\boldsymbol{\rho}_0,\lambda}:=\boldsymbol{\rho_{\boldsymbol{\rho}_0}}(x,\lambda t)
	\end{align} on $X_k$ also solves (\ref{EquationTimeSmoothnessHelpConstruction}) with $\bu_0=\left(P_{X_k}(\boldsymbol{\rho}_{\boldsymbol{\rho}_0,\lambda})\big|_{t=0}\right)$ and by uniqueness we get \begin{align}\zeta(\lambda,\boldsymbol{u}_0)=P_{X_k}\left(\boldsymbol{\rho}_{\boldsymbol{\rho}_0,\lambda}\right).\end{align} 
	As we now can write $\boldsymbol{\rho}_{\boldsymbol{\rho}_0,\lambda}$ as composition of $\zeta$ and the smooth function $\gamma$, we derive smoothness of $\boldsymbol{\rho}_{\boldsymbol{\rho}_0,\lambda}$ in $\lambda$. Consequently, this implies
	\begin{align}\label{EquationRelationofSolutionsinParameterTrick}
	\partial_{\lambda}\boldsymbol{\rho}_{\boldsymbol{\rho}_0,\lambda}(t)=t\partial_t\boldsymbol{\rho}_{\boldsymbol{\rho}_0}(\cdot,\lambda t).
	\end{align}
	From this we conclude that $\partial_t\boldsymbol{\rho}_{\boldsymbol{\rho}_0}(\cdot,t)\in C^{k+\alpha}_{TJ}(\Gammaast)$ for all $t\in[t_k,T]$. Finally, we observe that 
	\begin{align}\label{Equation10102018}
	\|\partial_{\lambda}\gamma\left(\zeta(1,\boldsymbol{u}_0)\right)\|_{X_k}\le C\int_0^1\|\partial_2\partial_{\lambda}\zeta(1,s\boldsymbol{u}_0)\boldsymbol{u}_0\|_{X_k}ds\le \int_0^1 C\|\boldsymbol{u}_0\|_{C^{4+\alpha}_{TJ}(\Gammaast)}ds\le C\|\boldsymbol{\rho}_0\|_{C^{4+\alpha}}.
	\end{align}
	Here, we used in the first step analyticity of $\gamma$ - and consequently boundedness of its derivative in a neighborhood of $0$ -, in the second step the same argument for  $\zeta$ is analytic and in the last stept continuity of $P_{X_k}$. Combining \eqref{Equation10102018} with \eqref{EquationRelationofSolutionsinParameterTrick} we conclude \eqref{EquationHihgerRegulartiyforTimeDerivativeTripleJunctions}, which finishes the proof of Proposition \ref{LemmaHigherTimeRegularity}.
\end{proof} 
In the next step we want to use the gained time regularity to show additional regularity in space.
\begin{proposition}[Higher space regularity of solutions of (SDFTJ) near stationary dubble-bubbles]\label{LemmaHigherSpaceRegularityofSolutions} 
	For $T>0, k\in\N_{\ge 4}, t_k\in (0,T]$, there is $D_k, C_k'>0$ such that for all solutions \begin{align}\brho\in C^{4+\alpha,1+\frac{\alpha}{4}}_{TJ}(\GammaastT)\cap C^{k+\alpha,0}_{TJ}(\Gammaast\times[t_k,T])\end{align}
	with
	\begin{align}
	\partial_t\brho\in C^{k+\alpha,0}_{TJ}(\Gammaast\times[t_k,T]),\quad\|\partial_t\brho(t)\|_{C^{k+\alpha,0}_{TJ}(\Gammaast\times[t_k,T])}\le D_k,
	\end{align}
	we have that $\brho(t)\in C^{k+3+\alpha}_{TJ}(\Gammaast)$ for all $t\in[t_k,T]$ and 
	\begin{align}\label{EquationEstimateHohereRegularitatOrtTJ}
	\|\brho\|_{C^{k+3+\alpha,0}_{TJ}(\Gammaast\times[t_k,T])}\le \frac{C_k'}{t_k}\|\partial_t\brho\|_{C^{k+\alpha,0}_{TJ}(\Gammaast\times[t_k,T])}.
	\end{align}
\end{proposition}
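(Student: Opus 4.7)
The plan is to treat the equation $\partial_t \brho = -\Delta_{\brho}H_{\brho}$ at each fixed time $t \in [t_k,T]$ as a quasi-linear fourth-order elliptic boundary value problem for $\brho(t)$ on the reference cluster $\Gammaast$, and trade the time regularity $\partial_t\brho \in C^{k+\alpha,0}_{TJ}$ (provided by Proposition \ref{LemmaHigherTimeRegularity}) for spatial regularity of $\brho$ via an Agmon--Douglis--Nirenberg (ADN) Schauder estimate, built on the linear analysis already developed for the short time existence result in \cite{garckegoesweinpreprintshorttimeexistenceSDFTJ}.

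First I would freeze the coefficients of the quasi-linear operator $-\Delta_{\brho}H_{\brho}$ by substituting the known function $\brho(t) \in C^{k+\alpha}_{TJ}(\Gammaast)$ into them. The principal part of the resulting linear operator depends only on $\nabla\brho$ (through the pulled-back metric and its inverse), and therefore has coefficients in $C^{k-1+\alpha}_{TJ}(\Gammaast)$; all other (lower-order) coefficients have at least this regularity. The six boundary conditions (CC), (AC), (CCP), (FB) are treated analogously: the linearization at $\brho$ provides a linear boundary operator of the same structure as in \cite{garckegoesweinpreprintshorttimeexistenceSDFTJ}, and the non-linear remainders are collected as boundary data on the right-hand side. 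Using Lemma \ref{LemmaProductEstimatesParabolichHspace}, the Banach algebra property of Lemma \ref{LemmaBanachspacepropertieofWkp}, and the smallness of $\brho$ in $C^{4+\alpha}$ enforced by the hypothesis $\|\partial_t\brho\|\le D_k$, I would estimate these non-linear remainders in the H\"older spaces demanded by the ADN system.

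Next, I would apply the linear ADN Schauder estimate from \cite{garckegoesweinpreprintshorttimeexistenceSDFTJ} at each fixed $t$. Because the principal coefficients of the frozen fourth-order operator only lie in $C^{k-1+\alpha}_{TJ}(\Gammaast)$, the maximal Schauder gain is three derivatives, yielding precisely the claimed regularity $\brho(t) \in C^{k+3+\alpha}_{TJ}(\Gammaast)$ and an estimate of the form
\[
\|\brho(t)\|_{C^{k+3+\alpha}_{TJ}(\Gammaast)} \le C\bigl(\|\partial_t\brho(t)\|_{C^{k+\alpha}_{TJ}(\Gammaast)} + \|\brho(t)\|_{C^0_{TJ}(\Gammaast)}\bigr).
\]
The $1/t_k$-factor in the claimed bound is then obtained by absorbing the $C^0$-term via the corresponding $1/t_k$-estimate from Proposition \ref{LemmaHigherTimeRegularity} (which controls $\brho$ on $[t_k,T]$ in terms of $\partial_t\brho$ through integration in time from an arbitrary small reference time). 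Uniformity of $C$ in $t$ follows because the frozen coefficients and the non-linear remainders have norms bounded uniformly in $t$ by the hypothesis.

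The main obstacle will be the careful bookkeeping of H\"older regularity for the non-linear boundary remainders, especially for the third-order flux balance (FB): its remainder a priori involves products with $\nabla^3\brho$, which has trace regularity only $C^{k-3+\alpha}(\Sigmaast)$ and is well below the $C^{k+\alpha-\sigma_i}$ demanded by ADN on the boundary. This is overcome by exploiting that each remainder vanishes to second order at $\brho=0$, so that the critical high-derivative factor always appears multiplied by a coefficient of the form $B(\brho,\nabla\brho,\nabla^2\brho)$ which vanishes at the origin and is small by the hypothesis; the product estimates of Lemma \ref{LemmaProductEstimatesParabolichHspace}, together with the trace statement of Remark \ref{RemarkTracesParabolicHspaces}, then yield the boundary data in the right H\"older space with a small constant that can be absorbed into the Schauder constant.
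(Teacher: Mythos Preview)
Your approach is the ``usual idea'' that the paper explicitly rules out in the opening paragraph of Section~\ref{SectionParabolicSmoothing}: freezing the coefficients at the solution and applying the linear ADN estimate. The obstruction is not the flux balance (FB) but the \emph{fully nonlinear} angle conditions $\mathcal{G}^2,\mathcal{G}^3$. These are first-order in $\brho$ through $N_{\brho}$, so after linearization at $\brho(t)\in C^{k+\alpha}_{TJ}$ the boundary operator (or, equivalently, the nonlinear boundary remainder) has coefficients that are smooth functions of $\nabla\brho|_{\Sigmaast}$ and hence only $C^{k-1+\alpha}(\Sigmaast)$, whereas ADN needs $C^{k+3+\alpha-\sigma_i}=C^{k+2+\alpha}(\Sigmaast)$ there to reach $C^{k+3+\alpha}$. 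Your proposed fix---that the remainder carries a small prefactor vanishing at $0$---controls only the \emph{norm}, not the H\"older \emph{exponent}; a small $C^{k-1+\alpha}$ function is still not in $C^{k+2+\alpha}$, so nothing can be absorbed. A bootstrap does not close either: for a genuinely fully nonlinear first-order boundary condition the remainder always lags one order behind the solution, so the deficit is never recovered.

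The paper's proof avoids this by linearizing at the \emph{stationary} point instead of at $\brho(t)$ and invoking the implicit function theorem. One sets
\[
G(\mathfrak{f},\brho)=\bigl(-\Delta_{\brho}H_{\brho}-\mathfrak{f},\ \mathcal{G}(\brho)\bigr):\ C^{k-1+\alpha}_{TJ}(\Gammaast)\times C^{k+3+\alpha}_{TJ}(\Gammaast)\to C^{k-1+\alpha}_{TJ}(\Gammaast)\times\textstyle\prod_{i=1}^6 C^{k+3+\alpha-\sigma_i}(\Sigmaast),
\]
so that $G(0,0)=0$ and $\partial_2G(0,0)=(\mathcal{A}_{all},\mathcal{B})$. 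Since this linearization is at $\brho=0$, all coefficients come from the smooth reference cluster $\Gammaast$; bijectivity then follows from the ADN/Lopatinskii--Shapiro analysis already done in \cite{garckegoesweinpreprintshorttimeexistenceSDFTJ}. The implicit function theorem produces a smooth local inverse $\zeta:C^{k-1+\alpha}\to C^{k+3+\alpha}$. For the actual solution one takes $\mathfrak{f}(\brho(t)):=V_{\brho}(t)=(\partial_t\brho\,N_\ast+\partial_t\bmu\,\tau_\ast)\cdot N_{\brho(t)}$, which lies in $C^{k-1+\alpha}$ (the loss of one order versus $\partial_t\brho\in C^{k+\alpha}$ is through $N_{\brho}$), checks $G(\mathfrak{f}(\brho(t)),\brho(t))=0$, and concludes $\brho(t)=\zeta(\mathfrak{f}(\brho(t)))\in C^{k+3+\alpha}$ by uniqueness. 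The constant $D_k$ is exactly what keeps $\mathfrak{f}(\brho(t))$ inside the IFT neighborhood, and the estimate follows from smoothness of $\zeta$ as in \eqref{Equation10102018}.
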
 
\begin{proof}
	We consider the operator
	\begin{align*}
	G: C^{k-1+\alpha}_{TJ}(\Gammaast)\times C^{k+3+\alpha}_{TJ}(\Gammaast)&\to C^{k-1+\alpha}_{TJ}(\Gammaast)\times\Pi_{i=1}^6 C^{k+3+\alpha-\sigma_i}(\Sigmaast),\\
	(\mathfrak{f}, \boldsymbol{\rho})&\mapsto (-\Delta_{\brho}H_{\brho}-\mathfrak{f}, \mathcal{G}(\boldsymbol{\rho})).
	\end{align*}
	Hereby, $\mathcal{G}$ denotes the non-linear boundary operator from (\ref{EquationAnalyticFormulationofSDFTJ}) and the $\sigma_i$ are as in \eqref{EquationDefinitionofSigma}. We have that $G(0,0)=0$ as $\Gammaast$ is a stationary solution of $(SDFTJ)$ and additionally we observe that due to the results from \cite[Section 4]{garckegoesweinpreprintshorttimeexistenceSDFTJ} we have
	\begin{align*}
	\partial_2 G(0,0)\boldsymbol{\rho}=(\mathcal{A}_{all}(\boldsymbol{\rho}),\mathcal{B}(\boldsymbol{\rho})).
	\end{align*}
	As we checked in \cite[Section 5.2]{garckegoesweinpreprintshorttimeexistenceSDFTJ}
	the Lopatinskii-Shapiro conditions for this system, we may apply the results from \cite{agmondouglisNi1959estimates} to see that $\partial_2 G(0,0)$ is bijective. Hence,  the implicit function theorem yields the existence of neighborhoods $0$ in $U\subset C^{k-1+\alpha}_{TJ}(\Gammaast)$ and  $0$ in $V\subset C^{k+3+\alpha}_{TJ}(\Gammaast)$ and a unique, smooth function $\zeta: U\to V$ fulfilling
	\begin{align}
	G(\mathfrak{f},\zeta(\mathfrak{f}))=0.
	\end{align}
	Now, we want to connect this with the solution $\brho$ of $(SDFTJ)$. For this we observe that for 
	\begin{align}
	\mathfrak{f}(\brho(t)):=V_{\brho}(t)=\left(\partial_t\brho(t)N_{\ast}+\partial_t\bmu(\brho)(t)\tau_{\ast}\right)\cdot N_{\brho(t)}
	\end{align} 
	we have $\mathfrak{f}(\brho(t))\in C^{k-1+\alpha}(\Gammaast)$ and
	\begin{align}
	G(\mathfrak{f}(\brho(t)),\brho(t))=0.
	\end{align}
	Due to uniqueness of $\zeta$ this shows $\zeta(\mathfrak{f}(\brho(t)))=\brho(t)$ and thus $\brho(t)\in C^{k+3+\alpha,0}_{TJ}(\Gammaast)$. The estimate (\ref{EquationEstimateHohereRegularitatOrtTJ}) can be proven as in (\ref{Equation10102018}).
\end{proof}
In the final step we start now a boot-strap procedure to get arbitrary high space regularity.
\begin{proposition}[$C^k$-regularity in space near stationary double bubbles]\label{PropositionHigherSpaceRegularitySFDTJ}\ \\
	Let $\brho$ be the solution of (SDFTJ) from Theorem \ref{TheoremSTETripleJunctions} with initial data $\brho_0$ and existence time $T$. For every $k\in\N_{\ge 4}$ and $t_k\in(0,T]$ there are $\varepsilon_k>0, C_k>0$ such that for all $\brho_0\in C^{4+\alpha}_{TJ}(\Gammaast)$ with $\|\brho_0\|\le \varepsilon_k$ we have
	\begin{align}\label{EquationEstimateHigherRegularitybrhoTJ}
	\brho\in C^{k+\alpha,0}_{TJ}(\Gammaast\times[t_k,T]), \quad \|\brho\|_{ C^{k+\alpha,0}_{TJ}(\Gammaast\times[t_k,T])}\le\frac{C_k}{t_k}\|\brho_0\|_{C^{4+\alpha}_{TJ}(\Gammaast)}.
	\end{align}
\end{proposition}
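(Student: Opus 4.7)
The plan is to bootstrap by alternately invoking Propositions \ref{LemmaHigherTimeRegularity} and \ref{LemmaHigherSpaceRegularityofSolutions}. The base case $k=4$ will be given by Theorem \ref{TheoremSTETripleJunctions}: for $\|\brho_0\|_{C^{4+\alpha}_{TJ}(\Gammaast)} \leq \varepsilon_0$ the unique solution $\brho \in C^{4+\alpha, 1+\alpha/4}_{TJ}(\GammaastT)$ is controlled by $\|\brho_0\|_{C^{4+\alpha}_{TJ}(\Gammaast)}$, so in particular $\brho \in C^{4+\alpha, 0}_{TJ}(\Gammaast \times [t_k, T])$ with a corresponding bound.

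For the inductive step, I would suppose that for some $m \geq 4$ the regularity $\brho \in C^{m+\alpha, 0}_{TJ}(\Gammaast \times [t_k, T])$ is already known, with a quantitative bound $\|\brho\|_{C^{m+\alpha, 0}} \leq K_m\|\brho_0\|_{C^{4+\alpha}}$ valid under $\|\brho_0\|_{C^{4+\alpha}} \leq \tilde{\varepsilon}_m$. Applying Proposition \ref{LemmaHigherTimeRegularity} at order $m$ then yields $\partial_t\brho \in C^{m+\alpha, 0}_{TJ}(\Gammaast \times [t_k, T])$ with bound $\tfrac{C_m}{t_k}\|\brho_0\|_{C^{4+\alpha}}$. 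After further shrinking $\tilde{\varepsilon}_m$ so that this quantity stays below the threshold $D_m$ required by Proposition \ref{LemmaHigherSpaceRegularityofSolutions}, I would apply the latter at order $m$ and conclude $\brho \in C^{m+3+\alpha, 0}_{TJ}(\Gammaast \times [t_k, T])$ with a corresponding estimate. Each cycle thus lifts the spatial regularity by three orders.

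For an arbitrary $k \geq 4$, I would pick $N \in \N$ with $4+3N \geq k$ and repeat the cycle $N$ times, setting $\varepsilon_k := \min\{\varepsilon_0, \tilde{\varepsilon}_4, \tilde{\varepsilon}_7, \ldots, \tilde{\varepsilon}_{4+3(N-1)}\} > 0$. Chaining the intermediate estimates gives a bound of the form $C(k, t_k)\|\brho_0\|_{C^{4+\alpha}}$; since the statement allows $C_k$ to depend on the fixed parameters $k$ and $t_k$, the finitely many accumulated powers of $1/t_k$ can be absorbed into a single $C_k$, recovering \eqref{EquationEstimateHigherRegularitybrhoTJ}. If $k \notin \{4, 7, 10, \ldots\}$, one simply embeds $C^{k'+\alpha, 0} \hookrightarrow C^{k+\alpha, 0}$ for the first $k'$ in the chain with $k' \geq k$.

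The genuinely analytic content --- the parameter trick used in Proposition \ref{LemmaHigherTimeRegularity} to smooth in time, together with the elliptic implicit function argument used in Proposition \ref{LemmaHigherSpaceRegularityofSolutions} to trade time regularity for space regularity --- has already been carried out in the preceding two propositions. The only obstacle here is therefore essentially book-keeping: ensuring that the nested smallness thresholds $\tilde{\varepsilon}_m$ remain strictly positive throughout the finite induction, and that the constants $K_m$ propagate correctly; both are automatic because the induction terminates after a number of steps depending only on $k$.
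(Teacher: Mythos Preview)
Your proposal is correct and follows essentially the same approach as the paper: start from the $C^{4+\alpha}$-regularity furnished by Theorem~\ref{TheoremSTETripleJunctions}, then alternate Propositions~\ref{LemmaHigherTimeRegularity} and~\ref{LemmaHigherSpaceRegularityofSolutions} to gain three spatial orders per cycle. The paper's proof is a two-line sketch of exactly this bootstrap, and your version supplies the bookkeeping (tracking the thresholds $\tilde{\varepsilon}_m$ and $D_m$, absorbing accumulated powers of $1/t_k$ into $C_k$, and embedding down for $k\notin 4+3\N_0$) that the paper leaves implicit.
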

\begin{remark}[$C^{\infty}$-regularity]\ \\
	As we might have $\varepsilon_k\to 0$ for $k\to\infty$ this does not show $C^{\infty}$-regularity. But for our stability analysis it will be enough to choose $k$ big enough depending only on the dimension of the surrounding space.
\end{remark}
\begin{proof}
	Proposition \ref{LemmaHigherTimeRegularity} and Proposition \ref{LemmaHigherSpaceRegularityofSolutions} start a bootstrap procedure as for $k=4$ condition (\ref{EquationRandomCondition}) is already fulfilled due to Theorem \ref{TheoremSTETripleJunctions}. Then, in every step we gain three orders of differentiability in space, which shows the claim.
\end{proof}
\section{Choice of the Tangential Part}\label{SectionTangentialPart}
\setlength{\parindent}{0em} 
As we argued in Section \ref{SectionTechnicalAspects}, we need to get rid of the non-local tangential part. Nevertheless, we still want the tangential part to be given as a function in $\boldsymbol{\rho}$ as otherwise we would have different degrees of freedom for the first and second derivative of the surface area, which would be a problem in the proof of the LSI. The idea to achieve such a tangential part is surprisingly easy. We observe that the linear connection \eqref{EquationEquivalentCondtionforTripleJunctionConservation} between $\boldsymbol{\rho}$ and $\boldsymbol{\mu}$ is the same along the triple junction. This suggests to use the matrix $\mathcal{T}$ also in the interior to get at every point the tangential part $\bmu$ as function of the normal part $\brho$. The constructed function will then involve no projection on $\Sigma_{\ast}$ and thus be purely local. The only problem hereby is that for the calculation we have to evaluate all the $\rho_i$ in one point and technically each $\rho_i$ only exists on $\Gamma^i_{\ast}$. But we can solve this by identifying the three hypersurfaces via diffeomorphisms.\\
More precisely, for $B=B_1(0)\subset\R^{n}$ we choose $C^{\infty}$-parametrizations 
\begin{align}\label{EquationIdentificationoftheGammai}
\boldsymbol{\varphi}=(\varphi_1,\varphi_2,\varphi_3): B\to (\R^{n+1})^3
\end{align}
of $(\Gamma^1_{\ast},\Gamma^2_{\ast},\Gamma^3_{\ast})$ such that for all $x\in\partial B$ we have that 
\begin{align}
\varphi_1(x)=\varphi_2(x)=\varphi_3(x).
\end{align}	\label{PageWhyCanWeparametrizeoveronedomain}
We note here that it is indeed possible to parametrize each $\Gamma^i_{\ast}$ with one parametrization as $\Gamma^i_{\ast}$ is either a spherical cap or a flat ball in a hypersurface. Now, we can define the new tangential coefficient field $\bmu_G$ on $B$ via the linear connection from before and then do a pushfoward, that is,
\begin{align}\label{EquationNewTangentialPart}
\boldsymbol{\mu}_{G}(\boldsymbol{\rho}) :=(\mathcal{T}(\boldsymbol{\rho}\circ\boldsymbol{\varphi}^{-1}))\circ\boldsymbol{\varphi}^{-1}.
\end{align}
By construction this function fulfills the necessary condition for the concurrency of the triple junctions.
\begin{remark}[Linearization results for the new tangential part]\ \\
	In the next section we will use the same linearization results as in \cite[Section 4]{garckegoesweinpreprintshorttimeexistenceSDFTJ} for the boundary conditions with the original tangential part. This is indeed possible as  the calculations were done for a general tangential part and so they work both for the original tangential part $\eqref{EquationEquivalentCondtionforTripleJunctionConservation}_2$ and the new one $\eqref{EquationNewTangentialPart}$.
\end{remark}
Before we can use this new tangential term we have to check that by this procedure we describe the same triple junction manifolds as in Theorem \ref{TheoremSTETripleJunctions}, which is carried out in the following Lemma. There, we will denote by $\bmu_{DGK}$ the tangential part from Section \ref{SubsectionShortTimeExistence} and by $\bmu_G$ the tangential given by \eqref{EquationNewTangentialPart}.
\begin{lemma}[Equivalence of the tangential parts]\label{LemmaEuivalenceTangeitalPart}\ \\
	For every $k\ge 2$ we consider the space
	\begin{align}
	C^k_{TJ,0}(\Gammaast):=\{\brho\in C^k_{TJ}(\Gammaast)\big| \rho^i=0\text{ on }\Sigmaast, i=1,2,3\}.
	\end{align} 
	There exist $r,r'>0$ (depending on k) together with a map 
	\begin{align} 
	\bar{F}: C^k_{TJ}(\Gamma_{\ast})\supset B_r(0)\to B_{r'}(0)\subset C^k_{TJ,0}(\Gamma_{\ast}),
	\end{align}
	such that the map $F=\bar{F}+\Id$ fulfils for all $\brho\in B_r(0)$ that
	\begin{align}\label{LemmaEquivalencyofthetangentialparts}
	\Gamma_{\boldsymbol{\rho}, \mu_{DGK}(\boldsymbol{\rho})}=\Gamma_{F(\boldsymbol{\rho}),\mu_{G}(F(\boldsymbol{\rho}))}.
	\end{align}
\end{lemma}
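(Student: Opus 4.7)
The plan is to apply the Hildebrandt--Graves implicit function theorem (Proposition \ref{PropostionImplicitFunction}) to an equation expressing that the two parametrizations describe the same hypersurface up to a reparametrization of the reference. The structural fact enabling this is that the two tangential fields agree on $\Sigmaast$: indeed $\mu^i_G(\brho)|_{\Sigmaast}=(\mathcal{T}\brho)^i$ by the construction \eqref{EquationNewTangentialPart} of $\mu_G$, while $\mu^i_{DGK}(\brho)|_{\Sigmaast}=(\mathcal{T}\brho)^i$ by \eqref{EquationEquivalentCondtionforTripleJunctionConservation}. Consequently $\Phi^i_{\brho,\mu_{DGK}(\brho)}$ and $\Phi^i_{\brho,\mu_G(\brho)}$ coincide pointwise on $\Sigmaast$, so the correction we seek must automatically vanish there; this is consistent with requiring $\bar F(\brho)\in C^k_{TJ,0}(\Gammaast)$.

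Concretely, for each $i$ I would introduce
\begin{align*}
\mathcal{F}^i(\brho,\bar\brho,\Psi)(\sigma) := \Phi^i_{\brho+\bar\brho,\mu_G(\brho+\bar\brho)}(\sigma) - \Phi^i_{\brho,\mu_{DGK}(\brho)}(\Psi^i(\sigma)),
\end{align*}
where $\bar\brho\in C^k_{TJ,0}(\Gammaast)$ and $\Psi=(\Psi^1,\Psi^2,\Psi^3)$ is a triple of $C^k$-diffeomorphisms of each $\Gamma^i_\ast$ close to the identity and satisfying $\Psi^i|_{\Sigmaast}=\Id$. The vanishing of $\mathcal{F}=(\mathcal{F}^i)_i$ says exactly that $\Phi^i_{\brho+\bar\brho,\mu_G(\brho+\bar\brho)}$ is a reparametrization of $\Phi^i_{\brho,\mu_{DGK}(\brho)}$ by $\Psi^i$, hence that the images coincide as sets. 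Clearly $\mathcal{F}(0,0,\Id)=0$, and both $\mu_G$ and $\mu_{DGK}$ being linear in $\brho$ (they are built from the linear matrix $\mathcal{T}$ composed with an evaluation or pushforward) together with $\Phi^i_{0,0}=\Id$ yield
\begin{align*}
\partial_{(\bar\brho,\Psi)}\mathcal{F}^i(0,0,\Id)[\delta\bar\brho,\delta\Psi](\sigma) = \delta\bar\rho^i(\sigma)\,N^i_\ast(\sigma) + \mu^i_G(\delta\bar\brho)(\sigma)\,\tau^i_\ast(\sigma) - \delta\Psi^i(\sigma).
\end{align*}
Since $\delta\Psi^i(\sigma)\in T_\sigma\Gamma^i_\ast$ and $\tau^i_\ast(\sigma)$ are tangent to $\Gamma^i_\ast$ while $N^i_\ast(\sigma)$ is normal, decomposing the target into its normal and tangential components shows that this linearization is a bijection onto the space of $\R^{n+1}$-valued functions vanishing on $\Sigmaast$: the normal component fixes $\delta\bar\rho^i$, after which the tangential component fixes $\delta\Psi^i$. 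That $\mathcal{F}$ indeed maps into this boundary-vanishing target follows again from the agreement $\mu_{DGK}(\brho)|_{\Sigmaast}=\mu_G(\brho)|_{\Sigmaast}$.

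Smoothness of $\mathcal{F}$ in $C^k$ follows from smoothness of $N^i_\ast,\tau^i_\ast,\mathcal{T},\boldsymbol{\varphi}$ together with the Banach algebra property of $C^k$-spaces; Proposition \ref{PropostionImplicitFunction} then produces continuous maps $\bar\brho(\brho),\Psi(\brho)$ defined on some ball $B_r(0)\subset C^k_{TJ}(\Gammaast)$ and solving $\mathcal{F}(\brho,\bar\brho(\brho),\Psi(\brho))=0$. Setting $\bar F:=\bar\brho(\cdot)$ and $F:=\bar F+\Id$, the identity rewrites as
\begin{align*}
\Phi^i_{F(\brho),\mu_G(F(\brho))}(\sigma) = \Phi^i_{\brho,\mu_{DGK}(\brho)}(\Psi^i(\sigma)),
\end{align*}
which, since each $\Psi^i$ is a diffeomorphism of $\Gamma^i_\ast$, immediately yields \eqref{LemmaEquivalencyofthetangentialparts}. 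The main technical hurdle I anticipate is setting up a clean Banach-manifold structure for the boundary-fixing reparametrizations $\Psi^i$ near the identity so that the IFT applies; this can be handled by writing $\Psi^i=\exp_{\Id}(\xi^i)$ with $\xi^i$ a $C^k$ tangent vector field on $\Gamma^i_\ast$ vanishing on $\Sigmaast$, thereby reducing the whole problem to an IFT on a standard linear space. Beyond that, the argument is a routine normal/tangential decomposition.
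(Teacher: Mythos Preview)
Your approach is genuinely different from the paper's and in some ways more transparent. The paper works with the \emph{scalar} equation
\[
G(\brho,\bar\brho)(x) := d_H\bigl(\Phi^i_{\brho+\bar\brho,\mu_G(\brho+\bar\brho)}(x),\,\Gamma^i_{\brho,\mu_{DGK}(\brho)}\bigr)=0,
\]
applies the implicit function theorem in $\bar\brho$ alone, and computes $\partial_2G(0,0)\bar\brho=\bar\brho$ by an explicit calculation using that each $\Gamma^i_\ast$ is a spherical cap or a flat disk. Since $G=0$ only says that the new graph lands \emph{inside} the old surface, the paper then finishes with a short topological step: the image is simply connected with the same boundary, hence equals $\Gamma^i_{\brho,\mu_{DGK}(\brho)}$. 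Your formulation carries the reparametrization $\Psi$ as an explicit unknown, so set equality is automatic once $\mathcal{F}=0$, and the linearization follows from a clean normal/tangential split rather than from the curvature of spheres; in particular you never use the specific geometry of the standard double bubble.

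One caveat: the hurdle you single out---putting a Banach-manifold structure on boundary-fixing diffeomorphisms via $\Psi=\exp(\xi)$---is routine. The more delicate point is that $\xi\mapsto\Phi^i_{\brho,\mu_{DGK}(\brho)}\circ\exp(\xi)$ contains the composition $\rho^i\circ\exp(\xi)$ with $\rho^i$ merely in $C^k$, so for $\brho\neq 0$ the candidate Fr\'echet derivative $\delta\xi\mapsto(\nabla\rho^i\circ\exp(\xi))\cdot D\exp(\xi)\delta\xi$ only lands in $C^{k-1}$, and condition~ii.) of Proposition~\ref{PropostionImplicitFunction} is not verified with target $C^k$. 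The paper's scalar-distance formulation has exactly the same one-derivative loss (the gradient of the signed distance to a $C^k$ surface is $C^{k-1}$), which is why it defers the continuity of $\partial_2G$ to the signed-distance calculus of Pr\"uss--Simonett. In your setting the cheapest repair is to take $\brho\in C^{k+1}$ as the parameter space while keeping $(\bar\brho,\xi)$ and the target in $C^k$; this costs nothing for the later applications thanks to the parabolic smoothing of Section~\ref{SectionParabolicSmoothing}.
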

\begin{proof}
	Consider for $X=C^k_{TJ}(\Gamma_{\ast}), Y=Z=C^k_{TJ,0}(\Gamma_{\ast})$ the map $G: X\times Y \to Z$
	\begin{align*}
	(\brho,\overline{\brho})\mapsto (x\mapsto d_H(x+(\rho^i+\overline{\rho}^i)(x)\nu^i_{\ast}(x)+(\boldsymbol{\mu_{G}}(\brho+\overline{\brho})(x))^i\tau^i_{\ast}(x),\Gamma_{\brho, \bmu_{DGK}(\brho)}^i))_{i=1,2,3}.
	\end{align*}
	Hereby, $d_H$ denotes the usual Hausdorff distance from a point to a compact set. Note that $G$ has indeed values in $Z$ as $\bmu_{DGK}$ and $\bmu_G$ equal on $\Sigmaast$. Therefore, $\Gamma_{\brho,\bmu_{DGK}(\brho)}$  and $\Gamma_{\brho+\overline{\brho},\bmu_G(\brho+\overline{\brho})}$ have the same triple junction as $\brho+\overline{\brho}=\brho$ on $\Sigmaast$. We want to use the implicit function theorem to find a map $\bar{F}$ with $G(\boldsymbol{\rho},\bar{F}(\boldsymbol{\rho}))=0$ for $\boldsymbol{\rho}$ small enough. Then, the surfaces $\Gamma^i_{F(\boldsymbol{\rho}),\mu_{G}(F(\boldsymbol{\rho}))}$ are subsets of $\Gamma^i_{\boldsymbol{\rho},\mu_{DGK}(\boldsymbol{\rho})}$. Additionally, they are simply connected and have the same boundary as $\Gamma^i_{\boldsymbol{\rho},\mu_{DGK}(\boldsymbol{\rho})}$ and consequently they have to be equal to $\Gamma^i_{\boldsymbol{\rho},\mu_{DGK}(\boldsymbol{\rho})}$. So, $\bar{F}$ fulfils (\ref{LemmaEquivalencyofthetangentialparts}). We claim that
	\begin{align}\label{EquationProofofReparametrizationofTangentialPart}
	\partial_2G(0,0)\overline{\boldsymbol{\rho}}=\overline{\brho}, \quad \overline{\boldsymbol{\rho}}\in C^2_{TJ}(\Gamma_{\ast}).
	\end{align}
	In order to see this we will calculate $G(0,\varepsilon \widetilde{\boldsymbol{\rho}})$ pointwise. There are two geometrical situations that could arise. The first is that $x+\overline{\rho}^i(x)\nu^i_{\ast}+(\mu_{G}(\overline{\boldsymbol{\rho}})(x))_i$ lies within the $R^i$-tube of $\Gamma^i_{\ast}$. The second possibility, which in theory could also arise, is that points near $\Sigma_{\ast}$ could leave the $R^i$ tube. We want to see that for $\overline{\boldsymbol{\rho}}$ small enough in $Y$ only the first situation is possible. For this we note that for any $\sigma\in \Gamma_{\ast}^i$ the ball with radius $\min(R^i,\|\sigma-\pr_{\Sigma_{\ast}}(\sigma)\|_{\R^{n+1}})$ is completely contained in the $R^i$-tube of $\Gamma_{\ast}^i$. Hereby, $\pr_{\Sigma_{\ast}}$ denotes the projection the projection on the nearest point on $\Sigmaast$. This is at least near $\Sigmaast$ well-defined. By elementary geometry we see that 
	\begin{align} 
	\|\sigma-\pr_{\Sigma_{\ast}}(\sigma)\|_{\R^{n+1}}=2R^i\sin\left(\frac{d_{\Gamma^i}(\sigma,\pr_{\Sigma_{\ast}}(\sigma))}{R^i}\right).
	\end{align}
	Thus, this quantity as function in $d_{\Gamma^i}(\sigma,\pr_{\Sigma_{\ast}}(\sigma))$ has a derivative larger than $1$ close to zero. On the other hand, we know that
	\begin{align*}
	\|x-(x+\overline{\rho}^i(x)\nu^i_{\ast}(x)+\mu_{G}(\overline{\rho}(x))^i\tau^i_{\ast}(x))\|_{\R^{n+1}}&\le C(|\overline{\rho}^i(x)|+\sum_{j=1}^3|\overline{\rho}^j(x)|)\\
	&\le Cd_{\Gamma^i_{\ast}}(x,\pr_{\Sigma_{\ast}}(x))\|\overline{\rho}\|_{C^1},
	\end{align*}   
	with the constant $C$ only depending on the linear relation between $\boldsymbol{\mu}$ and $\boldsymbol{\rho}$ but not on the point $x$. So, by choosing $r'$ small enough we can guarantee that the point $x+\overline{\rho}^i(x)\nu^i_{\ast}(x)+\mu_{G}(\overline{\rho}(x))_i\tau^i_{\ast}(x)$ lies within the ball within the ball around $x$ with radius $\min(R^i,\|x-\pr_{\Sigma_{\ast}}(x)\|_{\R^{n+1}})$ and so within the $R_{\ast}^i$-tube of $\Gamma_{\ast}^i$. Note that this argumentation is also true if $\Gammaast^i$ is a flat ball.\\
	We return now to the proof of \eqref{EquationProofofReparametrizationofTangentialPart} where we can now restrict to the first situation. If $\Gammaast^i$ is a flat ball this is clear as we then have for $\overline{\brho}\in Z$ and $\varepsilon>0$
	\begin{align}
	d_H(x+\varepsilon\overline{\rho}^i(x)\nu^i_{\ast}(x)+(\boldsymbol{\mu}_G(\varepsilon\overline{\brho})(x))^i\tau_{\ast}^i(x), \Gamma^i_{\ast})=d_H(x+\varepsilon\overline{\rho}^i(x)\nu^i_{\ast}(x), \Gamma^i_{\ast})=\varepsilon\overline{\rho}^i(x).
	\end{align} 
	Here, we used that in this case the tangential movement is parallel to $\Gamma^i_{\ast}$. Thus, this will not change the distance to $\Gammaast^i$. Taking the limit $\varepsilon\to 0$ we get easily \eqref{EquationProofofReparametrizationofTangentialPart}.\\
	We now consider the case that $\Gammaast^i$ is a spherical cap. From elementary geometry we know that the nearest point on $\Gamma^i_{\ast}$ from  $x+\overline{\rho}^i(x)\nu^i_{\ast}(x)+\mu_{G}(\overline{\rho}(x))_i\tau^i_{\ast}(x)$ is given by the intersection of the straight line between $x_{\overline{h},\mu_G(\overline{h})}$ and $M^i_{\ast}$ and $\Gamma^i_{\ast}$. By Pythagoras' theorem we get that
	\begin{align*}
	G\left(0,\varepsilon\overline{\boldsymbol{\rho}}\right)(x)&=\sqrt{\left(R^i_{\ast}+\varepsilon\overline{\rho}^i\right)^2+\|\mu_{G}^i(\varepsilon\overline{\boldsymbol{\rho}})(x)\tau_{\ast}^i(x)\|^2}-R^i_{\ast}\\
	&=(R^i_{\ast}+\varepsilon \overline{\rho}^i)+\frac{1}{2(R^i_{\ast}+\varepsilon\overline{\rho}_i)}\varepsilon^2\|\mu_{G}^i(\overline{\boldsymbol{\rho}})(x)\tau^i_{\ast}(x)\|^2+\sigma(\varepsilon^4)-R_i^{\ast}.
	\end{align*} 
	Here, we used $\mu_{G}^i(\varepsilon\overline{\rho})=\varepsilon\mu_{G}^i(\overline{\rho})$ and a Taylor expansion. This implies
	\begin{align*}
	\frac{G(0,\varepsilon\overline{\rho})-G(0,0)}{\varepsilon}=\overline{\rho}_i+\frac{1}{2(R_i^{\ast}+\varepsilon\overline{\rho_i})}\varepsilon\|\mu_{G}(\overline{\boldsymbol{\rho}})\tau^i_{\ast}(x)\|^2+\sigma(\varepsilon^4).
	\end{align*}
	This converges uniformly in $x$ to $\overline{\rho}^i(x)$ for $\varepsilon\to 0$ as $\mu_{G}^i(\overline{\boldsymbol{\rho}})(x)\tau^{\ast}(x)$ is bounded in $x$. This shows (\ref{EquationProofofReparametrizationofTangentialPart}). Continuity of $G$ and $\partial_2 G$ follows from the formulas in \cite[Section 2.2]{pruesssimonett2016movinginterfacesandquasilinearparabolicevolutionequations}, which we can apply as the points $x+(h_i+\overline{h}_i)(x)\nu^i_{\ast}(x)+(\boldsymbol{\mu_{G}}(h+\overline{h})(x))_i\tau^i_{\ast}(x)$ stay in the tubular neighborhoods of $\Gamma_{h, \mu_{DGK}(h)}^i$ for $r$ and $r'$ small enough. Therefore, we can apply the implicit function theorem to show the claim.
\end{proof}
In the following, when we write $\bmu$ we will always refer to $\bmu_{G}$ unless said otherwise and if we omit the tangential part in the notation, the used tangential part will always be $\bmu_{G}$.
\section{Parmetrization of the Set of Volume Preserving Double Bubbles}\label{SectionParametrization}
Next we need to rewrite the set of the volume preserving distance functions over a Banach space, i.e., the tangent space of this constraint. This condition is necessary as otherwise the needed Fredholm property of the second derivative will not be fulfilled. Our parametrization should look like the one in \eqref{EquationParametrizationBizzarr}. Thus, we will need a suitable complementary space of the set of functions fulfilling the linearized volume conditions. A natural choice would be the set of constant functions fulfilling the concurrency condition $\eqref{EquationEquivalentCondtionforTripleJunctionConservation}_1$. But it is more convenient to work with functions that vanish near $\Sigma_{\ast}$ to avoid additional tangential parts and so we will choose suitable bump functions. \\
To construct our setting we start with
\begin{align}
U:=\left\{\bu\in C^2_{TJ}(\Gammaast)\big|\gamma^1u^1+\gamma^2u^2+\gamma^3u^3=0\text{ on }\Sigma_{\ast} \right\}.     
\end{align}
Note that we can put the condition for concurrency of the triple junction already in $U$ as it is linear. Now, we have again to consider the tangent space of volume preserving evolutions, which leads to
\begin{align}\label{EquationMeanValueIntegralCondition}
U_1:=&\left\{\boldsymbol{v}\in U\Bigg| \int_{\Gamma^1_{\ast}}v^1d\mathcal{H}^n=\int_{\Gamma^2_{\ast}}v^2d\mathcal{H}^n=\int_{\Gamma^3_{\ast}}v^3d\mathcal{H}^n\right\}
\end{align}
The equality condition for the integrals follows directly from applying Reynolds' transport theorem on the evolution of the enclosed volume and corresponds to the mean value freeness condition we got in the previous chapter. \\
Now we need to construct a suitable complemented space of $U_1$. To this end, we choose $f^1\in C^{\infty}(\Gamma^1_{\ast})$ and $f^2\in C^{\infty}(\Gamma^2_{\ast})$ with
\begin{align*}
&\supp(f^1)\subset\subset \Gamma^1_{\ast}\backslash\supp(\tau_{\ast}^1), \int_{\Gamma^1_{\ast}}f^1d\mathcal{H}^n=1,\\
&\supp(f^2)\subset\subset\Gamma^2_{\ast}\backslash\supp(\tau_{\ast}^2), \int_{\Gamma^2_{\ast}}f^2d\mathcal{H}^{n}=1
\end{align*}
and set
\begin{align}
U_2:=\text{span}(f^1,f^2).
\end{align}
Here, we consider $f^1,f^2$ as functions in $C^{\infty}_{TJ}(\Gamma_{\ast})$ extending them by zero. The space $U_2$ is closed being finite dimensional. Observe that the choice of the support of $f^1$ and $f^2$ guarantees that addition of linear combinations of them to other distance functions $\boldsymbol{\rho}$ will not change the tangential part. Before we go on we have to check that $U_1$ and $U_2$ are indeed complementary spaces in $U$.
\begin{lemma}[Complementarity of $U_1$ and $U_2$]
	$U_1$ and $U_2$ are complementary spaces in $U$.
\end{lemma}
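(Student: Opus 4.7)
The plan is to verify the two defining properties of a direct sum decomposition of Banach spaces: (i) $U_1\cap U_2=\{0\}$, and (ii) $U=U_1+U_2$, noting that both $U_1$ (as the kernel of two continuous linear functionals) and $U_2$ (being finite-dimensional) are automatically closed in $U$, so that the algebraic direct sum will automatically be topological.

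For (i), suppose $\bv=af^1+bf^2\in U_1$. Since $f^1$ is supported in $\Gamma^1_\ast$ and $f^2$ in $\Gamma^2_\ast$, the component form of $\bv$ is $v^1=af^1$, $v^2=bf^2$, $v^3=0$. The integral equality in \eqref{EquationMeanValueIntegralCondition} then forces
\begin{align*}
a=a\int_{\Gamma^1_\ast}f^1\,d\mathcal{H}^n=\int_{\Gamma^3_\ast}v^3\,d\mathcal{H}^n=0,
\end{align*}
and analogously $b=0$, hence $\bv=0$.

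For (ii), given $\bu\in U$ set $I_i:=\int_{\Gamma^i_\ast}u^i\,d\mathcal{H}^n$ and define $c_1:=I_1-I_3$, $c_2:=I_2-I_3$, $\bv:=\bu-c_1f^1-c_2f^2$. Using $\int_{\Gamma^i_\ast}f^i\,d\mathcal{H}^n=1$ one checks that $\int_{\Gamma^1_\ast}v^1=\int_{\Gamma^2_\ast}v^2=\int_{\Gamma^3_\ast}v^3=I_3$, so the integral equality of $U_1$ is met. For the concurrency condition, recall that $f^1,f^2$ were chosen with compact support strictly inside $\Gamma^1_\ast,\Gamma^2_\ast$, hence in particular $f^1=f^2=0$ on $\Sigma_\ast$; therefore $\bv|_{\Sigma_\ast}=\bu|_{\Sigma_\ast}$ satisfies $\gamma^1v^1+\gamma^2v^2+\gamma^3v^3=0$, so $\bv\in U_1$. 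By construction $\bu=\bv+c_1f^1+c_2f^2\in U_1+U_2$.

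I do not expect any real obstacle here; the only point that requires a moment's attention is the verification that $\bv$ still lies in $U$, which is exactly what the careful choice of the supports of $f^1$ and $f^2$ (away from $\Sigma_\ast$, and in fact away from $\supp\tau_\ast^i$) buys us. The same support property will also ensure later that adding elements of $U_2$ to a distance function does not interfere with the tangential part $\bmu_G$ defined through \eqref{EquationNewTangentialPart}, which is the reason for preferring compactly supported bump functions over, e.g., constants.
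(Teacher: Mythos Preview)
Your proof is correct and follows essentially the same approach as the paper's: verify $U_1\cap U_2=\{0\}$ by evaluating the three integrals on an element of $U_2$, then show $U=U_1+U_2$ by subtracting the right linear combination of $f^1,f^2$ to equalize the integrals. The only cosmetic difference is that you normalize to the value $I_3$ whereas the paper (which contains some mislabeling of $f^1,f^2$ as $f^2,f^3$ in its proof) effectively normalizes to $I_1$; your version is in fact the one consistent with the definitions as stated.
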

\begin{proof}
	For $a,b\in\R$ and $f=af^2+bf^3$ we have that
	\begin{align*}
	\int_{\Gammaast^1}f\dH^n=0, \int_{\Gammaast^2}f\dH^n=a, \int_{\Gammaast^3}f^3\dH^n=b.
	\end{align*}
	Thus, only for $a=b=0$ we have $f\in U_1$ and therefore $U_1\cap U_2=\{0\}$. On the other hand, for $\boldsymbol{\rho}\in U$ it holds that
	\begin{align*}
	&\boldsymbol{\rho}=\left(\boldsymbol{\rho}-\alpha f^2-\beta f^3\right)+\alpha f^2+\beta f^3,\\
	&\alpha:=\int_{\Gammaast^2}\rho^2\dH^n-\int_{\Gammaast^1}\rho^1\dH^n, \beta:=\int_{\Gammaast^3}\rho^3\dH^n-\int_{\Gammaast^1}\rho^1\dH^n.
	\end{align*}
	Hereby, we have that $(\boldsymbol{\rho}-\alpha f^2-\beta f^3)\in U_1$ and so we conclude $U=U_1+U_2$.
\end{proof}
Now we are able to prove the existence of the sought parametrization of the volume constraint.
\begin{lemma}[Parametrization of the volume constraint for triple junction manifolds]\label{LemmaParametrisierungVolumenbedingungTripleJunction}\ \\
	Let $U=U_1\oplus U_2$ be as above. Then, there exists an open neighborhood $\widetilde{U}_1$ of $0$ in $U_1$ and an open neighborhood $\widetilde{U}$ of $0\in U$ together with a unique map $\bar{\gamma}: \widetilde{U}_1\to U_2$ such that the map \begin{align}
	\gamma:=\Id+\bar{\gamma}: \widetilde{U}_1\to\widetilde{U}
	\end{align}
	parametrizes the subset of all functions $U$ that belong to triple junction manifolds fulfilling the volume constraints. Furthermore, $\bar{\gamma}$ is analytic and we have for the first derivative of $\bar{\gamma}$ for $\boldsymbol{v}_0\in \widetilde{U}_1,\boldsymbol{v}\in U_1$ that
	\begin{align}\label{EquationFirstDerivativeofgammatriplejunction}
	\bar{\gamma}'(\boldsymbol{v}_0)\boldsymbol{v}&=\begin{pmatrix}
	\frac{\int_{\Gamma^1_{\gamma(\boldsymbol{v}_0)}}\left(v^1N^1_{\ast}+\mu^1(\boldsymbol{v})\tau^1_{\ast}\right)\cdot N^1_{\gamma(\boldsymbol{v}_0)}\dH^n-\int_{\Gamma^2_{\gamma(\boldsymbol{v}_0)}}\left(v^2N^2_{\ast}+\mu^2(\boldsymbol{v})\tau_{\ast}^2\right)\cdot N^2_{\gamma(\boldsymbol{v_0})}\dH^n}{\int_{\Gamma^2_{\gamma(\boldsymbol{v}_0)}}N^2_{\ast}\cdot N^2_{\gamma(\boldsymbol{v}_0)}\dH^n}f^1 \\
	\frac{\int_{\Gamma^3_{\gamma(\boldsymbol{v}_0)}}\left(v^3N^3_{\ast}+\mu^3(\boldsymbol{v})\tau_{\ast}^3\right)\cdot N^3_{\gamma(\boldsymbol{v_0})}\dH^n-\int_{\Gamma^1_{\gamma(\boldsymbol{v}_0)}}\left(v^1N^1_{\ast}+\mu^1(\boldsymbol{v})\tau^1_{\ast}\right)\cdot N^1_{\gamma(\boldsymbol{v}_0)}\dH^n}{\int_{\Gamma^3_{\gamma(\boldsymbol{v}_0)}}N^3_{\ast}\cdot N^3_{\gamma(\boldsymbol{v}_0)}\dH^n}f^2
	\end{pmatrix}.
	\end{align}
\end{lemma}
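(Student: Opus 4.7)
The plan is to obtain $\bar\gamma$ via the analytic implicit function theorem (Corollary \ref{AnalyticVersionOfImplicitFuntion}) applied to a map that encodes the two enclosed-volume constraints. For small $\brho\in U$ in $C^2_{TJ}$ the cluster $\Gamma_{\brho,\bmu_G(\brho)}$ from (\ref{EquationNormalGraphTJ}) is still embedded and bounds two well-defined open regions $\Omega_{12}(\brho),\Omega_{13}(\brho)$ by Lemma \ref{LemmaEuivalenceTangeitalPart}, so the volume-deviation map
\begin{align*}
F\colon U\supset\mathcal{O}\to\R^2,\quad F(\brho):=\bigl(\Vol(\Omega_{12}(\brho))-\Vol(\Omega_{12}(\Gammaast)),\ \Vol(\Omega_{13}(\brho))-\Vol(\Omega_{13}(\Gammaast))\bigr),
\end{align*}
is well-defined on a $C^2_{TJ}$-neighborhood $\mathcal{O}$ of $0$. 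Pulling back to $\Gammaast$ via $\Phi^i_{\brho,\bmu_G(\brho)}$, each volume becomes an integral over $\Gammaast$ whose integrand is polynomial in $\brho$, $\bmu_G(\brho)=(\mathcal{T}(\brho\circ\boldsymbol{\varphi}^{-1}))\circ\boldsymbol{\varphi}^{-1}$ and their first spatial derivatives, weighted by fixed smooth geometric factors. Since $\bmu_G$ depends linearly on $\brho$, $F$ is therefore analytic.

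Next I would set $G\colon\widetilde{U}_1\times\widetilde{U}_2\to\R^2$, $(\bv,\bw)\mapsto F(\bv+\bw)$, on small neighborhoods of zero; then $G(0,0)=0$ and $G$ is analytic. The crucial step is to verify that $\partial_2 G(0,0)\colon U_2\to\R^2$ is an isomorphism. A first-variation computation via Reynolds' theorem, combined with the orientation of the normals $N^1,N^2,N^3$ fixed in Section \ref{SectionNotation}, gives
\begin{align*}
DF(0)\brho=\Bigl(-\int_{\Gammaast^1}\rho^1\dH^n+\int_{\Gammaast^2}\rho^2\dH^n,\ \int_{\Gammaast^1}\rho^1\dH^n-\int_{\Gammaast^3}\rho^3\dH^n\Bigr),
\end{align*}
which also recovers (\ref{EquationMeanValueIntegralCondition}) as the linearized constraint. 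The tangential contributions to the first variation vanish for $\brho=f^1,f^2$: by choosing $\supp(f^j)$ suitably inside $\Gammaast^j\setminus\supp(\tau^j_{\ast})$ and identifying the collars around $\Sigmaast$ consistently under $\boldsymbol{\varphi}$, the locally-defined $\bmu_G(f^j)$ has its nontrivial components supported away from $\supp(\tau^i_{\ast})$. With $\int_{\Gammaast^j}f^j\dH^n=1$ this reduces $\partial_2 G(0,0)$ to a $2\times 2$ integer matrix with non-zero determinant.

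Corollary \ref{AnalyticVersionOfImplicitFuntion} then produces the unique analytic $\bar\gamma\colon\widetilde{U}_1\to U_2$ with $F(\bv+\bar\gamma(\bv))=0$, so that $\gamma=\Id+\bar\gamma$ maps $\widetilde{U}_1$ into the set of volume-preserving distance functions. The reverse inclusion (every such function in $\widetilde{U}$ lies in $\gamma(\widetilde{U}_1)$) follows from the uniqueness part of the implicit function theorem by decomposing a volume-preserving $\brho$ as $\bv+\bw\in U_1\oplus U_2$. For the derivative formula I would differentiate the identity $F(\gamma(\bv))=0$ at $\bv_0$ in direction $\bv\in U_1$,
\begin{align*}
0=DF(\gamma(\bv_0))\bigl(\bv+\bar\gamma'(\bv_0)\bv\bigr),
\end{align*}
and apply Reynolds' theorem at the perturbed cluster $\Gamma_{\gamma(\bv_0)}$, so that $DF_i(\gamma(\bv_0))\bw$ becomes a signed sum of integrals of $(w^j N^j_{\ast}+\mu^j(\bw)\tau^j_{\ast})\cdot N^j_{\gamma(\bv_0)}$ over $\Gamma^j_{\gamma(\bv_0)}$. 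Writing $\bar\gamma'(\bv_0)\bv=\alpha f^1+\beta f^2$ and using that the tangential parts generated by $f^1,f^2$ vanish on $\supp(\tau^j_{\ast})$, the resulting $2\times 2$ system for $(\alpha,\beta)$ decouples and can be inverted explicitly, yielding (\ref{EquationFirstDerivativeofgammatriplejunction}).

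The main obstacle I expect is the analyticity of $F$: one has to represent the volumes as manifestly convergent power series in $\brho$. The cleanest route is the divergence theorem, writing $\Vol(\Omega_{1i}(\brho))=\frac{1}{n+1}\sum_{j}\int_{\Gamma^j_{\brho}}X\cdot\nu^j_{\mathrm{out}}\dH^n$, pulling back to $\Gammaast$, and noting that $J^j_{\brho}$ and $N^j_{\brho}$ are real-analytic in $\brho$ and its first derivatives through the metric $g^j_{\brho}$. The remaining work is careful bookkeeping of the orientations entering Reynolds' formula and the linear action of $\mathcal{T}$ on $\brho$.
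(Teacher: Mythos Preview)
Your proposal is correct and follows essentially the same route as the paper: define the volume-deviation map on $U=U_1\oplus U_2$, verify that its partial derivative in the $U_2$-direction at the origin is an invertible $2\times 2$ matrix (using Reynolds' theorem and the support/normalization properties of $f^1,f^2$), apply the analytic implicit function theorem to produce $\bar\gamma$, and then differentiate the constraint $F(\gamma(\bv))=0$ implicitly to obtain (\ref{EquationFirstDerivativeofgammatriplejunction}). The paper is somewhat terser---it defers analyticity of the volume map to the later Lemma \ref{LemmaAnalyticityofwideEtildeTriplejunction} rather than invoking the divergence-theorem representation you propose---but the logical structure is identical.
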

\begin{proof}
	Consider the functional 
	\begin{align*}
	G: U=U_1\oplus U_2&\to \R^2,\\
	(\boldsymbol{v},\boldsymbol{w})=\boldsymbol{u}&\mapsto \begin{pmatrix}\Vol(\Omega_{13}^{\boldsymbol{u}})-V_{1}^{\ast} \\ \Vol(\Omega_{23}^{\boldsymbol{u}})-V_2^{\ast} \end{pmatrix}.
	\end{align*}
	We want to apply the implicit function theorem and so we have to consider the partial derivative $\partial_2 G(0,0)$. For $\boldsymbol{w}=af^1+bf^2, a,b\in\R$ we get by transport theorems
	\begin{align*}
	\partial_2G(0,0)\boldsymbol{w}=\begin{pmatrix}\int_{\Gamma^1_{\ast}}af^1\dH^n \\ -\int_{\Gamma^2_{\ast}}bf^2\dH^n
	\end{pmatrix}=\begin{pmatrix}
	a \\ -b
	\end{pmatrix}. 
	\end{align*}
	This shows that $\partial_2 G(0,0): W\to \R^2$ is bijective. Continuity of $\partial_2 G$ and analyticity of $G$ follow by similar arguments as in the proof of Lemma \ref{LemmaAnalyticityofwideEtildeTriplejunction} and so we get the existence of an analytic function $\bar{\gamma}$ with the desired properties.\\
	Now, for $\bar{\gamma}'$ we do an implicit differentiation and rearrange the terms afterwards to get (\ref{EquationFirstDerivativeofgammatriplejunction}). We hereby used the fact that $\bar{\gamma}'$ can be seen as element in $\R^2$ and that $\bar{\gamma}(\boldsymbol{v})$ does not change the tangential part of $\boldsymbol{v}$. This yields then \eqref{EquationFirstDerivativeofgammatriplejunction}.
\end{proof}
\begin{remark}
	One could also calculate $\bar{\gamma}''(0)$ using the same procedure as in the case of closed hypersurfaces. But for the application later we only need $\widetilde{E}''(0)$ and for its calculation we can also use the results from \cite{HutchingsMorganProofDoubleBubbleConjecture}. 
\end{remark}
\section{Variational Formulas}\label{SectionVariationFormula}

Next we want to derive the formulas for the first and second derivative of the surface energy with the volume constraint. Before doing so we want to specify the notation for the different energies arising in the setting. By the plain $E$ we denote the surface energy as a functional on the set of triple junction manifolds near the considered stationary point $\Gammaast$. The functional $\widehat{E}$ arises from $E$ by the parametrization of these triple junction manifolds using distance functions $\boldsymbol{\rho}$ and the associated tangential part $\boldsymbol{\mu}_{G}(\boldsymbol{\rho})$. Finally, we restrict $\widehat{E}$ on the set of distance functions belonging to triple junction manifolds fulfilling the volume constraints, which we parametrize using the function $\gamma$ constructed in the previous section. In total, we get the energy functional
\begin{align}
\widetilde{E}: C^2_{TJ,C,(0)}(\Gammaast)\supset \widetilde{V}\to\R, \boldsymbol{\rho}\mapsto \sum_{i=1}^3 \Area\left(\Gamma^i_{\gamma(\boldsymbol{\rho})}\right).
\end{align}
on the space
\begin{align}
C^2_{TJ,C,(0)}(\Gammaast)=\left\{\boldsymbol{\rho}\in C^2_{TJ}(\Gammaast)\big|\sum_{i=1}^3\rho^i=0\text{ on }\Sigmaast,\int_{\Gammaast^1}\rho^1\dH^n=\int_{\Gammaast^2}\rho^2\dH^n=\int_{\Gammaast^3}\rho^3\dH^n\right\}.
\end{align}
The variation of this can again be calculated using the surface transport theorem and for $\boldsymbol{\rho}_0\in \widetilde{V}, \boldsymbol{\rho}\in C^2_{TJ,C,(0)}(\Gammaast)$ this leads to
\begin{align}\label{EquationFirstDerivativeEtildetriplejunction}
\widetilde{E}'(\boldsymbol{\rho}_0)\boldsymbol{\rho}=&-\sum_{i=1}^3\int_{\Gamma^i_{\gamma(\boldsymbol{\rho}_0)}}H^i_{\gamma(\boldsymbol{\rho}_0)}\left(\rho^iN^i_{\ast}+\mu^i(\boldsymbol{\rho})\tau^i_{\ast}+\left[\gamma '(\boldsymbol{\rho}_0)\boldsymbol{\rho}\right]N^i_{\ast}\right)\cdot N^i_{\gamma(\boldsymbol{\rho}_0)}\dH^n\\
&-\int_{\Sigma_{\gamma(\boldsymbol{\rho}_0)}}\sum_{i=1}^3\left(\rho^iN^i_{\ast}+\mu^i(\boldsymbol{\rho})\tau_{\ast}^i \right)\cdot\nu^i_{\gamma(\boldsymbol{\rho}_0)}\dH^{n-1}.\notag
\end{align}
Hereby, we used that $\gamma'(\boldsymbol{\rho}_0)\boldsymbol{\rho}$ vanishes on $\Sigma_{\ast}$. For later use we need to rewrite the boundary term in (\ref{EquationFirstDerivativeEtildetriplejunction}) such that it does not depend on $\rho^1$. Using $\rho^1=-\rho^2-\rho^3$ and 
\begin{align*}
\mu^1&=-\frac{1}{\sqrt{3}}(\rho^2-\rho^3),\\ \mu^2&=-\frac{1}{\sqrt{3}}(\rho^3-\rho^1)=-\frac{1}{\sqrt{3}}(2\rho^3+\rho^2),\\ \mu^3&=-\frac{1}{\sqrt{3}}(\rho^1-\rho^2)=\frac{1}{\sqrt{3}}(2\rho^2+\rho^3),
\end{align*}
we can rewrite it as
\begin{align}
-\int_{\Sigmaast}&\rho^2(N^2_{\ast}\cdot\nu^2_{\grhonull}-N^1_{\ast}\cdot\nu^1_{\grhonull}-\frac{1}{\sqrt{3}}\tau_{\ast}^1\cdot\nu^1_{\grhonull}-\frac{1}{\sqrt{3}}\tau_{\ast}^2\cdot\nu^2_{\grhonull}\notag\\
&+\frac{2}{\sqrt{3}}\tau_{\ast}^3\cdot\nu^3_{\grhonull})J_{\Sigma_{\grhonull}}
+\rho^3(N_{\ast}^3\cdot\nu^3_{\grhonull}-N^1_{\ast}\cdot\nu^1_{\grhonull}+\frac{1}{\sqrt{3}}\tau_{\ast}^1\cdot\nu^1_{\grhonull}\label{EquationBoundaryTermininEstrichtriplejunction}\\
&-\frac{2}{\sqrt{3}}\tau_{\ast}^2\cdot\nu^2_{\grhonull}+\frac{1}{\sqrt{3}}\tau_{\ast}^3\cdot\nu^3_{\grhonull})J_{\Sigma_{\grhonull}}\dH^{n-1}\notag
\end{align}
We also need to rewrite the tangential and the $\bar{\gamma}$-part in the first line in (\ref{EquationFirstDerivativeEtildetriplejunction}). For the first one we have
\begin{align}
&-\sum_{i=1}^3\int_{\Gamma_{\grhonull}^i}H^i_{\grhonull}\mu^i(\boldsymbol{\rho})\tau_{\ast}^i\cdot N^i_{\grhonull}\dH^n=\int_{\Gamma^1_{\grhonull}}H^1_{\grhonull}\frac{1}{\sqrt{3}}(\rho^2-\rho^3)\tau_{\ast}^1\cdot N_{\grhonull}^1\dH^n\notag\\
&+\int_{\Gamma^2_{\grhonull}}H^2_{\grhonull}\frac{1}{\sqrt{3}}(\rho^3-\rho^1)\tau_{\ast}^2N^2_{\grhonull}\dH^n+\int_{\Gamma^3_{\grhonull}}H^3_{\grhonull}\frac{1}{\sqrt{3}}(\rho^1-\rho^2)\tau_{\ast}^3\cdot N^3_{\grhonull}\dH^n\label{EquationReformulationoftangentialpartinEtildestrichtriplejunction}\\
&=\sum_{i=1}^3\int_{\Gamma^i_{\ast}}\rho^i\frac{1}{\sqrt{3}}(H^{i-1}_{\grhonull}\tau_{\ast}^{i-1}\cdot N^{i-1}_{\grhonull}-H^{i+1}_{\grhonull}\tau_{\ast}^{i+1}\cdot N^{i+1}_{\grhonull})J_{\Gamma^i_{\grhonull}}\dH^n. \notag
\end{align}
Hereby, an $i-1=-1$ has to be read as $3$. Additionally, we did abuse of notation by just considering $H^{i-1}, \tau^{i-1}_{\ast}, N^{i-1}$ resp. $H^{i+1}, \tau^{i+1}_{\ast}, N^{i+1}$ as functions on $\Gamma^i_{\ast}$. To be precise one would need to include a pullback. To fit the $\bar{\gamma}'$-term in our framework for the proof of the LSI we actually need to rewrite it. For the closed situation an explicit calculation of this can be found in \cite[Lemma 5.7]{goesswein2019Dissertation}. The same procedure works in this situation. But we will skip this here for the sake of readability.\\
The second variation of $\widetilde{E}$ was already calculated in \cite[Proposition 3.3]{HutchingsMorganProofDoubleBubbleConjecture} varying twice in the same direction. For the variation in two different directions $\boldsymbol{\rho},\bar{\boldsymbol{\rho}}\in C^2_{TJ,C,(0)}(\Gammaast)$ we then get by polarization that
\begin{align}\label{EquationEtildestrichstrichtriplejunction}
\widetilde{E}''(0)\boldsymbol{\rho}\bar{\boldsymbol{\rho}}=-\sum_{i=1}^3\int_{\Gamma^i_{\ast}}\left(\Dast\rho^i+|II^i_{\ast}|^2\rho^i\right)\bar{\rho}^i\dH^n-\int_{\Sigma_{\ast}}\sum_{i=1}^3 (\partial_{\nu^i_{\ast}}\rho^i-q^i\rho^i)\bar{\rho}^i\dH^{n-1}.
\end{align}
with $q^1=(\kappa^1_{\ast}+\kappa^3_{\ast})/\sqrt{3}, q^2=(\kappa^1_{\ast}-\kappa^3_{\ast})/\sqrt{3}$ and $q^3=(-\kappa^1_{\ast}-\kappa^2_{\ast})/\sqrt{3}$. Recall that the movement induced by $\boldsymbol{\mu}$ is purely tangential at the reference surface and therefore vanishes. Again with the next section in mind we rewrite the boundary term using $\bar{\rho}^1=-\bar{\rho}^2-\bar{\rho}^3$ as
\begin{align}\label{EquationBoundaryPartEtildestrichstrichtriplejunction}
\int_{\Sigma_{\ast}}(\partialnuasta{2}\rho^2-\partialnuasta{1}\rho^1-q^2\rho^2+q^2\rho^1)\bar{\rho}^2+(\partialnuasta{3}\rho^3-\partialnuasta{1}\rho^1-q^3\rho^3+q^1\rho^1)\bar{\rho}^3\dH^{n-1}.
\end{align}
\section{Prerequisites for the \L ojasiewicz-Simon Gradient Inequality} \label{SectionProofofLsi}

Now we can proceed to the proof of the LSI in the case of triple junction manifolds. We first need a setting for \cite{FeehannMaridakisLojasiewiczSimon} and due to the consideration from Section \ref{SectionTechnicalAspects} we set for $m>2+\frac{n}{2}, m\in\N$
\begin{align}
V&=\left\{\boldsymbol{\rho}\in H^{m}_{TJ}(\Gammaast)\Bigg|\sum_{i=1}^3\rho^i=0\text{ on }\Sigmaast,\int_{\Gamma^1_{\ast}}\rho^1d\mathcal{H}^n=\int_{\Gamma^2_{\ast}}\rho^2d\mathcal{H}^n=\int_{\Gamma^3_{\ast}}\rho^3d\mathcal{H}^n\right\},\label{EquationSettingLSITriplejunctionSpaceV}\\
W&=H^{m-2}_{TJ}(\Gammaast)\times \left(H^{m-\frac{3}{2}}(\Sigmaast)\right)^2,\\
W&\hookrightarrow V^{\ast}: (\mathfrak{f}, \mathfrak{b}^2,\mathfrak{b}^3)\mapsto\left(\boldsymbol{\rho}\mapsto \sum_{i=1}^3 \int_{\Gamma^i_{\ast}}\mathfrak{f}^i\rho^i\dH^n+\int_{\Sigma_{\ast}}\mathfrak{b}^2\rho^2+\mathfrak{b}^3\rho^3\dH^{n-1}\right).\label{EquationSettingforLSITripleJunction}
\end{align}
Here, we get only two copies of $H^{m-\frac{3}{2}}_{TJ}(\Sigmaast)$ as one degree of freedom is lost due to the sum condition in $V$. Note that due to our choice of $m$ we have that  $V\hookrightarrow C^2$. In particular, $V$ and $W$ are Banach algebras. Reminding Remark \ref{RemarkMandLasMappingswithvaluesinW} we note that due to \eqref{EquationFirstDerivativeEtildetriplejunction}, \eqref{EquationBoundaryTermininEstrichtriplejunction} and \eqref{EquationReformulationoftangentialpartinEtildestrichtriplejunction} this means we consider $\widetilde{E}'$ as map with values in $W$ in the following way:
\begin{align}
\widetilde{E}'(\brho_0)&=\left(\mathfrak{f}(\brho_0),\mathfrak{b}^2(\brho_0),\mathfrak{b}^3(\brho_0)\right), \quad \brho_0\in V, \label{EquationEtildestrichalsElementinWTJ}  \\
\mathfrak{f}^i(\brho_0)&=\left(H^i_{\gamma(\boldsymbol{\rho}_0)}\left(N^i_{\ast}+\bar{\gamma}'(\boldsymbol{\rho}_0)^iN^i_{\ast}\right)\cdot N^i_{\gamma(\boldsymbol{\rho}_0)}+\frac{1}{\sqrt{3}}\sum_{j=1}^2(-1)^j(H^{i+j}_{\grhonull}\tau_{\ast}^{i+j}\cdot N^{i+j}_{\grhonull})\right)J_{\Gamma^i_{\grhonull}},\notag\\
\mathfrak{b}^2(\brho_0)&=N^2_{\ast}\cdot\nu^2_{\grhonull}-N^1_{\ast}\cdot\nu^1_{\grhonull}-\frac{1}{\sqrt{3}}\tau_{\ast}^1\cdot\nu^1_{\grhonull}-\frac{1}{\sqrt{3}}\tau_{\ast}^2\cdot\nu^2_{\grhonull}+\frac{2}{\sqrt{3}}\tau_{\ast}^3\cdot\nu^3_{\grhonull}J_{\Sigma_{\grhonull}},\notag \\
\mathfrak{b}^3(\brho_0)&=N_{\ast}^3\cdot\nu^3_{\grhonull}-N^1_{\ast}\cdot\nu^1_{\grhonull}+\frac{1}{\sqrt{3}}\tau_{\ast}^1\cdot\nu^1_{\grhonull}-\frac{2}{\sqrt{3}}\tau_{\ast}^2\cdot\nu^2_{\grhonull}+\frac{1}{\sqrt{3}}\tau_{\ast}^3\cdot\nu^3_{\grhonull}J_{\Sigma_{\grhonull}}.\notag
\end{align}
Using \eqref{EquationEtildestrichstrichtriplejunction} and \eqref{EquationBoundaryPartEtildestrichstrichtriplejunction}, we get for $\widetilde{E}''(0)$ as mapping $V\to W$ that
\begin{align}
\boldsymbol{\rho}\mapsto \begin{pmatrix}
-\Delta_{\Gamma_{\ast}}\boldsymbol{\rho}-|II_{\ast}|^2\boldsymbol{\rho} \\  \partialnuasta{2}\rho^2-\partialnuasta{1}\rho^1-q^2\rho^2+q^1\rho^1 \\
\partialnuasta{3}\rho^3-\partialnuasta{1}\rho^1-q^3\rho^3+q^1\rho^1
\end{pmatrix}, \quad \brho\in V. \label{EquationwidetildeEstrichstrichnullalsElementinWTJ}
\end{align}
Now, we want to verify the prerequisites to apply Theorem \ref{TheoremLSIFeehanMaridakis} to
$\widetilde{E}: V\to \R$. We begin with the analyticity of the first variation.
\begin{lemma}[Analyticity of $\widetilde{E}'$]\label{LemmaAnalyticityofwideEtildeTriplejunction}\ \\
	There are neighborhoods 
	\begin{align*}
	0\in U\subset V, 0\in \widetilde{U}\subset \left\{\boldsymbol{\rho}\in H^m_{TJ}(\Gammaast)\Bigg|\sum_{i=1}^3\rho^i=0\text{ on }\Sigma_{\ast}\right\}
	\end{align*}
	such that the following maps are analytic
	\begin{enumerate}[i.)]
		\item $\widetilde{U}\to H^{m-1}_{TJ}(\Gammaast, \R^{n+1}), \boldsymbol{\rho}\mapsto \partial_j^{\boldsymbol{\rho}}$ for all $j=1,...,n$,\\
		$\widetilde{U}\to \left(H^{m-\frac{3}{2}}(\Sigmaast,\R^{n+1})\right)^3, \boldsymbol{\rho}\mapsto \partial_j^{\boldsymbol{\rho}}$ for all $j=1,...,n-1$.
		\item $\widetilde{U}\to H^{m-1}_{TJ}(\Gammaast), \boldsymbol{\rho}\mapsto g_{jk}^{\boldsymbol{\rho}}, \boldsymbol{\rho}\mapsto g_{\boldsymbol{\rho}}^{jk}$ for all pairs $j,k\in\{1,...,n\}$,\\
		$\widetilde{U}\to \left(H^{m-\frac{3}{2}}(\Sigmaast)\right)^3, \boldsymbol{\rho}\mapsto g_{jk}^{\boldsymbol{\rho}}, \boldsymbol{\rho}\mapsto g_{\boldsymbol{\rho}}^{jk}$ for all pairs $j,k\in\{1,...,n-1\}$.
		\item $\widetilde{U}\to H^{m-1}_{TJ}(\Gammaast), \boldsymbol{\rho}\mapsto J_{\Gamma_{\boldsymbol{\rho}}}$,\\
		$\widetilde{U}\to \left(H^{m-\frac{3}{2}}(\Gammaast)\right)^3, \boldsymbol{\rho}\mapsto J_{\Sigma_{\boldsymbol{\rho}}}$.
		\item $\widetilde{U}\to H^{m-1}_{TJ}(\Gammaast, \R^{n+1}), \boldsymbol{\rho}\mapsto N_{\boldsymbol{\rho}}$,\\
		$\widetilde{U}\to H^{m-1}_{TJ}(\Gammaast), \boldsymbol{\rho}\mapsto N_{\boldsymbol{\rho}}\cdot N_{\ast}\\
		\widetilde{U}\to H^{m-1}_{TJ}(\Gammaast),\boldsymbol{\rho}\mapsto N_{\boldsymbol{\rho}}\cdot\tau_{\ast}$.
		\item $\widetilde{U}\to \left( H^{m-\frac{3}{2}}(\Sigmaast, \R^{n+1})\right)^3, \boldsymbol{\rho}\mapsto \nu_{\boldsymbol{\rho}}$,\\
		$\widetilde{U}\to \left(H^{m-\frac{3}{2}}(\Sigmaast)\right)^3, \boldsymbol{\rho}\mapsto \nu_{\boldsymbol{\rho}}\cdot N_{\ast},\boldsymbol{\rho}\mapsto\nu_{\boldsymbol{\rho}}\cdot \tau_{\ast}$. 
		\item $\widetilde{U}\to H^{m-2}_{TJ}(\Gammaast), \boldsymbol{\rho}\mapsto h_{jk}^{\boldsymbol{\rho}}$ for all pairs $j,k\in\{1,...,n\}$.
		\item $\widetilde{U}\to H^{m-2}_{TJ}(\Gammaast), \boldsymbol{\rho}\mapsto H_{\boldsymbol{\rho}}$.
		\item $U\to H^m_{TJ}(\Gammaast), \boldsymbol{\rho} \mapsto \gamma(\boldsymbol{\rho})$.
		\item $U\to W, \boldsymbol{\rho}\mapsto \widetilde{E}'(\boldsymbol{\rho})$.
	\end{enumerate} 
\end{lemma}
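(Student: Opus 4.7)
\textbf{Proof plan for Lemma \ref{LemmaAnalyticityofwideEtildeTriplejunction}.}

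The strategy is to verify items (i)--(ix) in order, building up from affine/polynomial quantities to ever more derived geometric objects, using throughout that the choice $m > 2 + \tfrac{n}{2}$ forces both $H^m_{TJ}(\Gammaast)$ and $H^{m-1}_{TJ}(\Gammaast)$ (and likewise the boundary spaces $H^{m-3/2}(\Sigmaast)$) to be Banach algebras via Lemma \ref{LemmaBanachspacepropertieofWkp}. The three principles I will reuse repeatedly are: (a)~finite sums and products of analytic maps into a Banach algebra are analytic; (b)~composition of the scalar analytic functions $t\mapsto t^{-1}$ and $t\mapsto \sqrt{t}$, considered as functions away from zero, with an analytic Banach-algebra-valued map is analytic (as a Neumann/Taylor series argument in the algebra); and (c)~the trace operator is continuous and linear, hence analytic, from $H^{s}(\Gammaast)$ to $H^{s-1/2}(\Sigmaast)$ for $s > 1/2$. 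The non-local tangential field has already been replaced by the local linear $\boldsymbol{\mu}_G(\boldsymbol{\rho}) = \mathcal{T}(\boldsymbol{\rho}\circ\boldsymbol{\varphi}^{-1})\circ\boldsymbol{\varphi}^{-1}$ in Section~\ref{SectionTangentialPart}, so it enters as a bounded linear (hence analytic) operator.

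For (i), differentiating the parametrization $\Phi^i_{\boldsymbol{\rho},\boldsymbol{\mu}(\boldsymbol{\rho})}(\sigma) = \sigma + \rho^i N^i_\ast + \mu^i(\boldsymbol{\rho})\tau^i_\ast$ gives tangent vectors that are \emph{affine} in $\boldsymbol{\rho}$ and lose one order of regularity, so the asserted maps are obviously analytic (indeed polynomial of degree one). For (ii), $g^{\boldsymbol{\rho}}_{jk}=\langle \partial_j^{\boldsymbol{\rho}},\partial_k^{\boldsymbol{\rho}}\rangle$ is a polynomial of degree two in the tangent vectors, hence analytic into $H^{m-1}_{TJ}$ by the Banach algebra property. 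For the inverse metric I use the cofactor formula $g^{jk}_{\boldsymbol{\rho}} = (\det g^{\boldsymbol{\rho}})^{-1}\,\mathrm{cof}(g^{\boldsymbol{\rho}})_{jk}$ together with principle~(b), noting that $\det g^\ast$ is bounded away from zero, so shrinking $\widetilde U$ keeps $\det g^{\boldsymbol{\rho}}$ in the analyticity region of $t\mapsto t^{-1}$. Item (iii) then follows by applying principle~(b) to $\sqrt{\cdot}$ and the formula $J_{\boldsymbol{\rho}} = \sqrt{\det(g^{\boldsymbol{\rho}})/\det(g^\ast)}$, and analogously for $J_{\Sigma_{\boldsymbol{\rho}}}$. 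Items (iv)--(v) follow by writing $N_{\boldsymbol{\rho}}$ and $\nu_{\boldsymbol{\rho}}$ as normalized cross/determinantal expressions in the tangent vectors (an algebraic map in the $\partial_j^{\boldsymbol{\rho}}$ divided by the norm of the same expression), where the norm is, near $\boldsymbol{\rho}=0$, bounded away from zero so $t\mapsto t^{-1/2}$ applies as in~(b); the boundary versions use the trace principle~(c).

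Items (vi)--(vii) are then immediate: $h^{\boldsymbol{\rho}}_{jk} = \langle\partial_j\partial_k\Phi_{\boldsymbol{\rho},\boldsymbol{\mu}(\boldsymbol{\rho})}, N_{\boldsymbol{\rho}}\rangle$ is a product of an affine $H^{m-2}_{TJ}$-map with the analytic $H^{m-1}_{TJ}$-map from (iv), and $H_{\boldsymbol{\rho}} = g^{jk}_{\boldsymbol{\rho}}h^{\boldsymbol{\rho}}_{jk}$ is the resulting product, which lives in $H^{m-2}_{TJ}$ because $m-2 > n/2$ ensures the required Banach algebra structure on the lower space. For (viii), I rerun the proof of Lemma~\ref{LemmaParametrisierungVolumenbedingungTripleJunction} in the $H^m$-setting: the functional $G$ used there is an algebraic combination of the quantities in (i)--(iii) integrated against smooth weights, hence analytic on the $H^m$-version of $U_1\oplus U_2$; since $\partial_2 G(0,0)$ is the same $2\times 2$ isomorphism as before, Corollary~\ref{AnalyticVersionOfImplicitFuntion} delivers an analytic $\gamma:U\to H^m_{TJ}(\Gammaast)$. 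Finally, (ix) combines all previous items via the explicit representation (\ref{EquationEtildestrichalsElementinWTJ}): each of $\mathfrak{f}^i$, $\mathfrak{b}^2$, $\mathfrak{b}^3$ is a finite sum of products of maps from (iii)--(vii), and the $\bar\gamma'$-contribution is analytic by (viii) composed with the analytic evaluation maps from (iv) and (vii).

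The main obstacle, and the only point requiring genuine care, is ensuring the correct target Sobolev space at each step without losing more than one order of differentiability per algebraic operation, which is exactly why the hypothesis $m > 2 + \tfrac n2$ is needed: it keeps $H^{m-2}_{TJ}$ inside the Banach algebra range so that the mean curvature $H_{\boldsymbol{\rho}}$ (and its boundary traces $H_{\boldsymbol{\rho}}|_{\Sigma_\ast}\in H^{m-5/2}\hookrightarrow H^{m-3/2}$-dual via duality pairing in $W$) land in the target space $W$. A secondary subtlety is the interpretation explained in Remark~\ref{RemarkMandLasMappingswithvaluesinW}: $\widetilde{E}'$ is a priori an element of $V'$, and one must verify that the Riesz representer via $W\hookrightarrow V'$ lies in the regular space $W$; but this is exactly what the computation of (\ref{EquationFirstDerivativeEtildetriplejunction}) together with the reformulation (\ref{EquationBoundaryTermininEstrichtriplejunction})--(\ref{EquationReformulationoftangentialpartinEtildestrichtriplejunction}) achieves, so no further work is required.
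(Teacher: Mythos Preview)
Your proposal is correct and follows essentially the same route as the paper: both proceed item by item, exploiting that $\partial_j^{\boldsymbol{\rho}}$ is affine in $\boldsymbol{\rho}$, that $H^{m-1}_{TJ}$ and $H^{m-2}_{TJ}$ are Banach algebras for the chosen $m$, that matrix inversion/square root act analytically on Banach-algebra-valued maps, that normals and conormals are normalized cross products, and that $\gamma$ is analytic via Corollary~\ref{AnalyticVersionOfImplicitFuntion}. One small slip: your parenthetical about ``$H_{\boldsymbol{\rho}}|_{\Sigma_\ast}\in H^{m-5/2}\hookrightarrow H^{m-3/2}$'' is both the wrong direction of embedding and unnecessary, since the boundary components $\mathfrak{b}^2,\mathfrak{b}^3$ in \eqref{EquationEtildestrichalsElementinWTJ} involve only $N_\ast\cdot\nu_{\gamma(\boldsymbol{\rho}_0)}$, $\tau_\ast\cdot\nu_{\gamma(\boldsymbol{\rho}_0)}$ and $J_{\Sigma_{\gamma(\boldsymbol{\rho}_0)}}$, never the mean curvature itself.
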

\begin{remark}[Argumentation in local coordinates]\ \\
	Remember that we parametrized all hypersurfaces over the same domain, cf. Section \ref{PageWhyCanWeparametrizeoveronedomain}. Therefore, the $\partial_i, g_{ij}, g^{ij}$ and $h_{ij}$ may be considered as global quantities.
\end{remark}
\begin{proof}
	Before we begin with the proof itself we want to remind that pullback and pushforward of function spaces on $\Gammaast$ to function spaces in local coordinates are analytic operators. So, we can work in the Sobolev spaces of local coordinates.\\
	For the transformation of the $\partial_i$ we calculated in \cite[Section 4]{garckegoesweinpreprintshorttimeexistenceSDFTJ} that
	\begin{align*}
	\partial_j^{\boldsymbol{\rho}}=\partial_j^{\ast}+\left(\partial_j\boldsymbol{\rho}\right)N_{\ast}+\boldsymbol{\rho}\partial_jN_{\ast}+\partial_j\boldsymbol{\mu}(\boldsymbol{\rho})\tau_{\ast}+\boldsymbol{\mu}(\boldsymbol{\rho})\partial_j\tau_{\ast}.
	\end{align*}
	The first summand is constant, the second and third clearly linear in $\boldsymbol{\rho}$. As $\boldsymbol{\mu}$ is also linear in $\rho$ and partial derivatives are linear operators this shows that the other terms are linear in $\boldsymbol{\rho}$. Thus, $\partial_j^{\boldsymbol{\rho}}$ is affin-linear in $\boldsymbol{\rho}$ and therefore analytic. As due to our choice of $m$ we have that $H^{m-1}_{TJ}(\Gammaast)$ is a Banach algebra, the first part of ii.) follows directly from i.) as products of analytic functions between Banach algebras are again analytic. For the $g^{jk}$ we use the fact that the inverse matrix is an analytic operator. For iii.) we use that $J_{\Gamma_{\boldsymbol{\rho}}}$ resp. $J_{\Sigma_{\rho}}$ are given by $\sqrt{g_{\Gamma_{\boldsymbol{\rho}}}}$ resp. $\sqrt{g_{\Sigma_{\boldsymbol{\rho}}}}$ and these quantities are analytic due to theory for composition operators and the fact that the determinant and the square root are analytic on suitable domains. \\
	Using the multi-linear structure of the crossproduct and the analyticity of $\partial_j^{\boldsymbol{\rho}}$ we conclude analyticity of the crossproduct of the $\partial_j^{\boldsymbol{\rho}}$. Its normalization is then analytic due to composition operator theory and so we get the first part of $iv.)$ from which we get the results for the other functions. With analyticity of $N_{\boldsymbol{\rho}}$ we can argue for $v.)$ in the same way as the outer conormal is given as normalized crossproduct of $N_{\boldsymbol{\rho}}$ and the $\partial_j^{\boldsymbol{\rho}}$ for $j=1,...,n-1$.  For the $h_{jk}^{\boldsymbol{\rho}}$ we have to study the second derivatives for which we get
	\begin{align*}
	\partial_j\partial_k\varphi_{\boldsymbol{\rho}}=h_{jk}^{\ast}&+(\partial_j\partial_k\boldsymbol{\rho})N_{\ast}+\partial_k\boldsymbol{\rho}\partial_j N_{\ast}+\partial_j\boldsymbol{\rho}\partial_kN_{\ast}+\boldsymbol{\rho}\partial_j\partial_k N_{\ast}\\
	&+\partial_j\partial_k\boldsymbol{\mu}(\boldsymbol{\rho})\tau_{\ast}+\partial_k\boldsymbol{\mu}(\boldsymbol{\rho})\partial_j\tau_{\ast}+\partial_j\boldsymbol{\mu}(\boldsymbol{\rho})\partial_k\tau_{\ast}+\boldsymbol{\mu}(\boldsymbol{\rho})\partial_j\partial_k\tau_{\ast}.
	\end{align*}
	Again using the fact that $\boldsymbol{\mu}$ is linear in $\boldsymbol{\rho}$ we see that this is an affin-linear function in $\boldsymbol{\rho}$ and thus analytic. Then, $h_{jk}^{\boldsymbol{\rho}}$ is analytic as product of analytic functions. Analyticity of $H_{\boldsymbol{\rho}}$ is a consequence of ii.) and vi.). For viii.) we note that with our newly derive knowledge we get that the function $G$ from Lemma \ref{LemmaParametrisierungVolumenbedingungTripleJunction} is analytic and thus by the analytic version of the implicit function theorem (cf. Corollary \ref{AnalyticVersionOfImplicitFuntion}) $\gamma$ is.\\
	Now we remind what $\widetilde{E}'$ as function with values in $W$ actually is, cf. \eqref{EquationEtildestrichalsElementinWTJ}. Analyticity of these expressions follows now from the results i.)-viii.), which finishes the proof.
\end{proof}  
Now it remains to show that $\widetilde{E}''(0)$ is a Fredholm operator of index 0.
\begin{lemma}[Fredholm property of $\widetilde{E}''(0)$]\label{LemmaFredholmpropertyofwideEtildeTriplejunction}\ \\
	The map $\widetilde{E}''(0): V\to W$ is a Fredholm operator of index $0$.	
\end{lemma}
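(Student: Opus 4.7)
My plan is to split $\widetilde{E}''(0) = \mathcal{L}_0 + \mathcal{K}$ into an elliptic principal operator and a compact lower-order perturbation, apply Proposition~\ref{PropostionCompactPerturbationofFredhom} to reduce Fredholmness to that of $\mathcal{L}_0$, and separately exploit the symmetry of $\widetilde{E}''(0)$ as a bilinear form to pin the index at zero. Reading off~\eqref{EquationwidetildeEstrichstrichnullalsElementinWTJ}, the natural splitting is
\begin{align*}
\mathcal{L}_0\boldsymbol{\rho} &= \bigl(-\Dast\boldsymbol{\rho},\ \partialnuasta{2}\rho^2 - \partialnuasta{1}\rho^1,\ \partialnuasta{3}\rho^3 - \partialnuasta{1}\rho^1\bigr), \\
\mathcal{K}\boldsymbol{\rho} &= \bigl(-|II_\ast|^2\boldsymbol{\rho},\ -q^2\rho^2 + q^1\rho^1,\ -q^3\rho^3 + q^1\rho^1\bigr).
\end{align*}

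Compactness of $\mathcal{K}\colon V \to W$ follows from the smoothness of $|II_\ast|^2$ and of the $q^i$, combined with the compact Sobolev embeddings $H^m_{TJ}(\Gammaast) \hookrightarrow\hookrightarrow H^{m-2}_{TJ}(\Gammaast)$ in the interior and $H^{m-1/2}(\Sigmaast) \hookrightarrow\hookrightarrow H^{m-3/2}(\Sigmaast)$ on the boundary; for the latter one also uses the continuity of the trace $V \to H^{m-1/2}(\Sigmaast)^3$. For the principal part, the common parametrization of all three hypersurfaces over a single ball (cf.\ Section~\ref{SectionTangentialPart}) turns $\mathcal{L}_0$ into a second-order elliptic system on a product domain subject to three boundary conditions on $\Sigmaast$: the concurrency constraint $\sum_i \rho^i = 0$ already built into $V$, and the two flux-type conditions $\partialnuasta{i}\rho^i - \partialnuasta{1}\rho^1 = \mathfrak{b}^i$ for $i = 2, 3$. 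The Lopatinskii-Shapiro condition can be checked by essentially the same freezing-of-coefficients computation used in~\cite[Subsection~5.2]{garckegoesweinpreprintshorttimeexistenceSDFTJ} for the fourth-order surface-diffusion operator, restricted now to its second-order piece; the elliptic theory of~\cite{agmondouglisNi1959estimates} then yields that $\mathcal{L}_0$, and hence $\widetilde{E}''(0)$ via Proposition~\ref{PropostionCompactPerturbationofFredhom}, is Fredholm with $\operatorname{ind}(\widetilde{E}''(0)) = \operatorname{ind}(\mathcal{L}_0)$.

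To show this common index is zero, I pass to the $H^1$-realization. Let
\begin{align*}
V_1 := \Bigl\{\boldsymbol{\rho} \in H^1_{TJ}(\Gammaast) \,\Big|\, \sum_{i=1}^3 \rho^i = 0 \text{ on } \Sigmaast,\ \int_{\Gammaast^1}\rho^1\,\dH^n = \int_{\Gammaast^2}\rho^2\,\dH^n = \int_{\Gammaast^3}\rho^3\,\dH^n \Bigr\},
\end{align*}
which is a Hilbert space by virtue of the Poincaré-type inequality of Proposition~\ref{LemmaPoincareineuqlaityfortriplejunction}. Integrating~\eqref{EquationEtildestrichstrichtriplejunction} by parts produces a continuous symmetric Gårding form
\begin{align*}
a(\boldsymbol{\rho},\bar{\boldsymbol{\rho}}) = \sum_{i=1}^3 \int_{\Gammaast^i}\bigl(\gast\rho^i\cdot\gast\bar{\rho}^i - |II_\ast|^2\rho^i\bar{\rho}^i\bigr)\dH^n + (\text{symmetric boundary terms})
\end{align*}
on $V_1 \times V_1$. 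Lax-Milgram combined with the Fredholm alternative implies that the associated map $V_1 \to V_1^*$ is a self-adjoint Fredholm operator of index zero, and Agmon-Douglis-Nirenberg elliptic regularity lifts weak solutions whose data lie in $W$ back to $V$, so that the kernel and cokernel of $\widetilde{E}''(0)\colon V \to W$ agree with those of the $H^1$-realization.

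The main obstacle I anticipate is the Lopatinskii-Shapiro verification at the triple junction, where the boundary operator couples the three hypersurfaces through conditions of differing orders that have to be shown compatible with both the concurrency and the volume constraints hidden in~$V$; a secondary technical point is tracking the interplay between the codimension-two volume-constraint subspace $V \subset \{\boldsymbol{\rho} \in H^m_{TJ} : \sum_i \rho^i = 0\}$ and the Fredholm indices of the underlying unconstrained system.
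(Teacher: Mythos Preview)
Your proposal follows the same overall strategy as the paper: split off the zero-order terms as a compact perturbation and reduce to the principal part $\mathcal{L}_0$. The paper, however, takes a more direct route for $\mathcal{L}_0$. Rather than first establishing Fredholmness via Agmon--Douglis--Nirenberg and then running a separate index-zero argument through the symmetric $H^1$-realization, the paper shows outright that $\mathcal{L}_0\colon V\to W$ is \emph{bijective}. Existence and uniqueness of weak solutions come from Lax--Milgram applied to the coercive form $B(\boldsymbol{\rho},\boldsymbol{\psi})=\sum_i\int_{\Gammaast^i}\gast\rho^i\cdot\gast\psi^i\,\dH^n$ on the constrained $H^1$-space (using the Poincar\'e inequality of Proposition~\ref{LemmaPoincareineuqlaityfortriplejunction}), and ADN regularity lifts the weak solution into $H^m$. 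Bijectivity of $\mathcal{L}_0$ immediately gives Fredholm index zero, and the compact perturbation then finishes.

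This buys the paper exactly the simplification you were worried about: the two obstacles you flag at the end---verifying Lopatinskii--Shapiro at the triple junction and tracking the codimension-two volume constraint against the unconstrained ADN index---never arise, because Lax--Milgram handles existence/uniqueness directly on the constrained space and ADN is invoked only locally for regularity (the paper cites \cite[Lemma~3]{depner2014mean}, not the fourth-order computation, for the complementary conditions). Your approach is not wrong, but it is more circuitous and leaves open precisely the bookkeeping you anticipated.
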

\begin{proof}
	We remind here that \eqref{EquationwidetildeEstrichstrichnullalsElementinWTJ} gives us $\widetilde{E}''(0)$ as map with values in $W$. Our goal is to prove bijectivity of the main part. Then, the claim follows as compact perturbations preserve the Fredholm index.\\
	Therefore, we consider the elliptic problem
	\begin{align}
	-\Dasta{i}\rho^i&=f^i & &\text{on }\Gasti, i=1,2,3,\label{EquationWhatever}\\
	\rho^1+\rho^2+\rho^3&=0 & &\text{on }\Sigmaast,\\
	\partialnuasta{2}\rho^2-\partialnuasta{1}\rho^1&=b^2 & &\text{on }\Sigmaast,\\
	\partialnuasta{3}\rho^3-\partialnuasta{1}\rho^1&=b^3 & &\text{on }\Sigmaast,
	\end{align}
	for $\boldsymbol{f}\in L^2_{TJ}(\Gammaast), b^2,b^3\in L^2(\Sigma_{\ast})$. We observe that for a classical solution $\boldsymbol{\rho}$ and a test function $\boldsymbol{\psi}\in C^2_{TJ,C}(\Gammaast)$ multiplication of $\ref{EquationWhatever}^i$ with $\psi^i$, integrating over $\Gamma^i_{\ast}$ and summing over $i$ yields to
	\begin{align*}
	\sum_{i=1}^3\int_{\Gammaast^i}f^i\psi^i\dH^n&=\sum_{i=1}^3\int_{\Gammaast^i}(-\Dast\rho^i)\psi^i\dH^n=\sum_{i=1}^3\intasti\gast\rho^i\cdot\gast\psi^i\dH^n-\intsigmaast\sum_{i=1}^3\left(\gast\rho^i\cdot\nuast^i\right)\psi^i\dH^{n-1}\\
	&=\sum_{i=1}^3\intasti\gast\rho^i\cdot\gast\psi^i\dH^n-\intsigmaast\partialnuasta{1}\rho^1(-\psi^2-\psi^3)+\partialnuasta{2}\rho^2\psi^2+\partialnuasta{3}\rho^3\psi^3\dH^{n-1}\\
	&=\sum_{i=1}^3\intasti\gast\rho^i\cdot\gast\psi^i\dH^n-\intsigmaast\left(\partialnuasta{2}\rho^2-\partialnuasta{1}\rho^1\right)\psi^2+\left(\partialnuasta{3}\rho^3-\partialnuasta{1}\rho^1\right)\psi^3\dH^{n-1}\\
	&=\sum_{i=1}^3\intasti\gast\rho^i\cdot\gast\psi^i\dH^n-\intsigmaast b^2\psi^2+b^3\psi^3\dH^{n-1}.
	\end{align*}
	Therefore, we get the following weak formulation. We set
	\begin{align*}
	\mathcal{E}&:=\left\{\ \boldsymbol{\rho}\in H^1_{TJ}(\Gammaast)\big|\gamma^1\rho^1+\gamma^2\rho^2+\gamma^3\rho^3=0\text{ on }\Sigmaast\right\},\\
	B(\boldsymbol{\rho},\boldsymbol{\psi})&:=\sum_{i=1}^3\int_{\Gamma^i_{\ast}}\gasta{i}\rho^i\cdot\gast\psi^i\dH^n,\quad \forall \boldsymbol{\rho},\boldsymbol{\psi}\in\mathcal{E},\\
	F(\boldsymbol{\psi})&:=\sum_{i=1}^3\int_{\Gamma^i_{\ast}}f^i\psi^i\dH^n+\int_{\Sigmaast}b^2\psi^2+b^3\psi^3\dH^{n-1}, \quad \forall \boldsymbol{\psi}\in \mathcal{E},
	\end{align*}
	and consider the problem
	\begin{align}\label{EquationHastudueinProblem}
	B(\boldsymbol{\rho},\boldsymbol{\psi})&=F(\boldsymbol{\psi}),\quad \forall\boldsymbol{\psi}\in\mathcal{E}.
	\end{align}
	Due to Lemma \ref{LemmaPoincareineuqlaityfortriplejunction} we get that $B$ is a continuous, coercive bilinear form and then the Lax-Milgram lemma gives the existence of a unique solution $\boldsymbol{\rho}\in \mathcal{E}$ to \eqref{EquationHastudueinProblem}. For $\boldsymbol{f}\in H^{m-2}_{TJ}(\Gammaast)$ and $b^2,b^3\in H^{m-\frac{3}{2}}(\Sigmaast)$ we can apply locally elliptic regularity theory from \cite{agmondouglisNi1959estimates} to get that $\boldsymbol{\rho}$ is actually in $H^m_{TJ}(\Gammaast)$. Note that the necessary complementary conditions were proven in \cite[Lemma 3]{depner2014mean}. From this we conclude that the operator
	\begin{align*}
	\boldsymbol{\rho}\mapsto \begin{pmatrix}
	-\Delta_{\Gamma_{\ast}}\boldsymbol{\rho} \\  \partialnuasta{2}\rho^2-\partialnuasta{1}\rho^1\\
	\partialnuasta{3}\rho^3-\partialnuasta{1}\rho^1
	\end{pmatrix},
	\end{align*}
	is bijective as a map from  $V$ to $W$, which finishes the proof.
\end{proof}
With this we deduce the desired LSI for the surface energy. 
\begin{theorem}[LSI for the parametrized surface area of triple junctions]\label{TheoremLSITripleJunctions} \ \\
	The \L ojasiewicz-Simon gradient inequality for $\widetilde{E}$ holds in $0\in V$ for the setting (\ref{EquationSettingLSITriplejunctionSpaceV})-(\ref{EquationSettingforLSITripleJunction}) for $V$ and $W$, i.e,. there exists $\sigma, C>0$ and $\bar{\theta}\in(0, \frac{1}{2}]$ such that for all $x\in V$ with $\|x\|\le\sigma$ it holds that
	\begin{align}\label{EquationLSITripleJunctionAnalytic}
	|\widetilde{E}(x)-\widetilde{E}(0)|^{1-\bar{\theta}}\le C ||\widetilde{E}'(x)\|_{W}.
	\end{align}
\end{theorem}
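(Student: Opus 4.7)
The plan is to apply Proposition \ref{TheoremLSIFeehanMaridakis} of Feehan-Maridakis directly to $\widetilde{E} \colon V \to \mathbb{R}$ at the critical point $x_\infty = 0 \in V$; almost all the technical work has been prepared in the preceding two lemmas, so the proof is essentially an assembly step together with a check on the embedding structure.

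First, I would make the embedding triple precise. Take $T' \colon V \hookrightarrow W$ to be $\boldsymbol{\rho}\mapsto(\boldsymbol{\rho},\rho^2|_{\Sigma_\ast},\rho^3|_{\Sigma_\ast})$, which is continuous by the inclusion $H^m_{TJ}(\Gammaast)\hookrightarrow H^{m-2}_{TJ}(\Gammaast)$ and the trace theorem (giving boundary regularity $H^{m-3/2}(\Sigmaast)$), and take $T \colon W \hookrightarrow V'$ to be the $L^2$-type pairing written in \eqref{EquationSettingforLSITripleJunction}. The composition $j = T \circ T'$ is induced by the bilinear form
\begin{align*}
(\boldsymbol{\rho},\boldsymbol{\eta})\mapsto \sum_{i=1}^3\int_{\Gamma^i_\ast}\rho^i\eta^i\,\dH^n + \int_{\Sigmaast}\rho^2\eta^2+\rho^3\eta^3\,\dH^{n-1},
\end{align*}
which on the diagonal equals $\sum_i\|\rho^i\|^2_{L^2(\Gamma^i_\ast)}+\|\rho^2\|^2_{L^2(\Sigmaast)}+\|\rho^3\|^2_{L^2(\Sigmaast)}\ge 0$, vanishing only when $\boldsymbol{\rho}=0$. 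Hence $j$ is a definite embedding in the sense of Proposition \ref{TheoremLSIFeehanMaridakis}.

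Next, I would note that $0$ is a critical point of $\widetilde{E}$: the reference cluster $\Gammaast$ is a standard double bubble, which by \cite{HutchingsMorganProofDoubleBubbleConjecture} minimizes surface area among configurations with the same enclosed volumes, and the parametrization $\gamma$ from Lemma \ref{LemmaParametrisierungVolumenbedingungTripleJunction} restricts $\widetilde{E}$ to precisely such volume-constrained configurations. Analyticity of $\mathcal{M}=\widetilde{E}'$ as a map $V\to W$ is the content of Lemma \ref{LemmaAnalyticityofwideEtildeTriplejunction}, part ix.), while the Fredholm property of index zero for $\mathcal{L}=\widetilde{E}''(0)$ as a map $V\to W$ has been proven in Lemma \ref{LemmaFredholmpropertyofwideEtildeTriplejunction}.

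With all three hypotheses of Proposition \ref{TheoremLSIFeehanMaridakis} verified, I can invoke it to obtain $\sigma, C>0$ and $\bar\theta\in(0,\tfrac{1}{2}]$ such that \eqref{EquationLSITripleJunctionAnalytic} holds for all $x\in V$ with $\|x\|_V\le\sigma$. There is no real obstacle at this stage — the two genuine difficulties (first, extracting a symmetric and coercive quadratic form from $\widetilde{E}''(0)$ despite the nonlinear boundary conditions, and second, controlling analyticity through the implicit function $\gamma$ that parametrizes the volume constraint and through the composition operators for geometric quantities) have already been addressed in Lemmas \ref{LemmaAnalyticityofwideEtildeTriplejunction} and \ref{LemmaFredholmpropertyofwideEtildeTriplejunction}. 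The only minor care needed here is to identify $\mathcal{M}$ and $\mathcal{L}$ with their representations in $W$ via $T$ (as cautioned in Remark \ref{RemarkMandLasMappingswithvaluesinW}), which is done in \eqref{EquationEtildestrichalsElementinWTJ} and \eqref{EquationwidetildeEstrichstrichnullalsElementinWTJ}.
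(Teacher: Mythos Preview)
Your proposal is correct and follows essentially the same route as the paper: verify definiteness of the embedding $V\hookrightarrow V'$, verify that $0$ is a critical point, and then invoke Lemmas~\ref{LemmaAnalyticityofwideEtildeTriplejunction} and~\ref{LemmaFredholmpropertyofwideEtildeTriplejunction} so that Proposition~\ref{TheoremLSIFeehanMaridakis} applies. The only noteworthy difference is in how criticality of $0$ is justified: you appeal to the global minimizing property of the standard double bubble from \cite{HutchingsMorganProofDoubleBubbleConjecture}, whereas the paper computes $\widetilde{E}'(0)\boldsymbol{\rho}$ directly from \eqref{EquationFirstDerivativeEtildetriplejunction}, using that $\bar\gamma'(0)=0$, that $\tau_\ast^i\cdot N_\ast^i=0$ and $\sum_i\mu^i=0$ kill the tangential and boundary contributions, and that $H_\ast^1+H_\ast^2+H_\ast^3=0$ together with the integral constraint in $V$ kills the remaining bulk term. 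Your argument is shorter but leans on a deep external theorem; the paper's computation is self-contained and in particular does not depend on the dimension in which the double bubble conjecture has been established.
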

\begin{proof}
	The embedding $V\subset W\hookrightarrow V'$ is definite as the $L^2$-product is. The point $0$ is indeed a critical point of $\widetilde{E}$ as we have $\bar{\gamma}'(0)=0$ due to (\ref{EquationFirstDerivativeofgammatriplejunction}) and then because of (\ref{EquationFirstDerivativeEtildetriplejunction}) for any $\boldsymbol{\rho}\in V$ that
	\begin{align*}
	\widetilde{E}(0)\boldsymbol{\rho}&=-\sum_{i=1}^3\int_{\Gamma_{\ast}^i}H^i_{\ast}\rho^i\dH^n-\int_{\Gamma^i_{\ast}}\sum_{i=1}^3\mu^i(\rho^i)\dH^{n-1}\\
	&=-\int_{\Gamma^1_{\ast}}(-H^2_{\ast}-H^3_{\ast})\rho^1\dH^n-\int_{\Gamma^2_{\ast}}H^2_{\ast}\rho^2\dH^n-\int_{\Gamma^3_{\ast}}H^3_{\ast}\rho^3\dH^n=0,
	\end{align*}
	where we used in the second equality that both the $\mu^i$ and the $H^i_{\ast}$ add up to zero and in the third equality the constraint of $\boldsymbol{\rho}$. The claims follows now from Theorem \ref{TheoremLSIFeehanMaridakis} using Lemma \ref{LemmaAnalyticityofwideEtildeTriplejunction} and \ref{LemmaFredholmpropertyofwideEtildeTriplejunction}.
\end{proof}
\section{Stability Analysis of Standard Double Bubbles} \label{SectionStabilityAnalysis}
With our result from the last section we are now able to perform our stability analysis. To state our result precisely, we will prove the following:
\begin{theorem}[Stability of stationary double bubbles wit respect to the surface diffusion flow]\label{TheoremStabilityofStationarydoublebubbles}\ \\
	Let $\alpha>0$. Every stationary double bubble $\Gammaast$ is stable with respect to the surface diffusion flow in the following sense: \vspace{-0.2cm}
	\begin{itemize}
		\item[i.)] The stationary solution $\brho\equiv 0$ of $(SDFTJ)$ is Lyapunov stable with respect to the $C^{4+\alpha}$-norm. \vspace{-0.2cm}
		\item[ii.)] There is an $\varepsilon_S>0$ such that for all $\brho_0\in C^{4+\alpha}_{TJ}(\Gammaast)$ fulfilling $\|\brho\|\le \varepsilon_S$ and the compatibility conditions \eqref{EquationCompabilityConditionsAnalyticVersionofSFDTJ} the solution from Theorem \ref{TheoremSTETripleJunctions} converges for $t\to\infty$ to some $\brho_{\infty}$. Furthermore, $\Gamma_{\brho_{\infty}}$ is a (possibly different) standard double bubble.
	\end{itemize}
\end{theorem}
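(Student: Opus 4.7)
The plan is to execute the standard \L ojasiewicz--Simon stability scheme (cf.\ \cite[Section 4]{chillfavsangovaschatzle2009willmoreflowsarenevercompact}) using the LSI of Theorem \ref{TheoremLSITripleJunctions}, after reconciling the different parametrizations that appear in the paper and bridging the gap between the LSI dual norm and the natural dissipation norm via the parabolic smoothing of Section \ref{SectionParabolicSmoothing}. For $\brho_0 \in C^{4+\alpha}_{TJ}(\Gammaast)$ of sufficiently small norm satisfying \eqref{EquationCompabilityConditionsAnalyticVersionofSFDTJ}, Theorem \ref{TheoremSTETripleJunctions} gives a solution $\brho$ of (SDFTJ) on some $[0,T]$. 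I pass to the local tangential part $\bmu_G$ via Lemma \ref{LemmaEuivalenceTangeitalPart} by setting $\widetilde{\brho}(t):=F(\brho(t))$; since (SDFTJ) preserves the two enclosed volumes, $\widetilde{\brho}(t)$ lies in the image of the parametrization $\gamma$ of Lemma \ref{LemmaParametrisierungVolumenbedingungTripleJunction}, so I define $\hat{\brho}(t)\in V$ by $\gamma(\hat{\brho}(t))=\widetilde{\brho}(t)$. Then $\widetilde{E}(\hat{\brho}(t))=E(\Gamma(t))$, and inserting $V=-\Delta_\Gamma H$ together with (CCP) and (FB) to cancel the triple-junction contributions yields the dissipation identity $\frac{d}{dt}\widetilde{E}(\hat{\brho}(t))=-\|\nabla_\Gamma H(t)\|_{L^2}^2=-\|V(t)\|_{\mathcal{H}^{-1}}^2$.

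The heart of the argument is the norm comparison $\|\widetilde{E}'(\hat{\brho}(t))\|_W \le C\|\nabla_\Gamma H(t)\|_{L^2}^{1-\eta}$ for some $\eta<\theta$, which combined with the LSI $\psi^{1-\theta}\le C\|\widetilde{E}'(\hat{\brho})\|_W$ (where $\psi(t):=\widetilde{E}(\hat{\brho}(t))-\widetilde{E}(0)\ge 0$) and the dissipation identity produces a \L ojasiewicz-type ODE inequality $-\frac{d}{dt}\psi^{\bar{\theta}}\ge c\|\nabla_\Gamma H\|_{L^2}$ for some $\bar{\theta}\in(0,1/2]$. To establish the norm comparison, I use that the bulk component of $\widetilde{E}'$ in \eqref{EquationEtildestrichalsElementinWTJ} is essentially the pulled-back mean curvature: its $L^2$-norm is controlled by $\|\nabla_\Gamma H\|_{L^2}$ through a Poincar\'e-type estimate on the cluster exploiting the (CCP) condition $\sum_i\gamma^i H^i_{\hat{\brho}}=0$ on $\Sigma_{\hat{\brho}}$, while the two boundary components $\mathfrak{b}^2,\mathfrak{b}^3$ vanish at $\hat{\brho}=0$ and so carry a smallness factor of $\hat{\brho}$. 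The upgrade from $L^2$ to the $H^{m-2}\times(H^{m-3/2})^2$-norm of $W$ is then done by interpolating against the uniform $C^{k+\alpha}$-bound supplied by Proposition \ref{PropositionHigherSpaceRegularitySFDTJ}, with $k$ chosen large enough that the interpolation exponent $\eta$ is strictly below $\theta$; this is exactly the mechanism advertised in Section \ref{SectionTechnicalAspects}.

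Integrating in time yields $\int_0^T\|V(t)\|_{\mathcal{H}^{-1}}\,dt\le C\psi(0)^{\bar{\theta}}$ uniformly in $T$, so $\hat{\brho}(t)$ is Cauchy in $\mathcal{H}^{-1}$. Interpolating this $L^1$-in-time bound against the $C^{k+\alpha}$-bounds from parabolic smoothing (Lemma \ref{LemmaInterpolationH-1H1} and its $C^{4+\alpha}$-analogue) gives a Cauchy estimate in $C^{4+\alpha}$. A standard continuation argument, shrinking $\varepsilon_S$ so that this estimate keeps $\|\hat{\brho}(t)\|_{C^{4+\alpha}}$ below the existence threshold of Theorem \ref{TheoremSTETripleJunctions}, furnishes global existence together with Lyapunov stability in the $C^{4+\alpha}$-norm. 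The Cauchy property then yields a limit $\hat{\brho}_\infty$. Since $\int_0^\infty\|\nabla_\Gamma H(t)\|_{L^2}^2\,dt<\infty$ and parabolic smoothing provides uniform $C^{k+\alpha}$-bounds on $\nabla_\Gamma H$, one obtains pointwise decay $\nabla_\Gamma H(t)\to 0$ as $t\to\infty$; combined with the preserved conditions (CCP), (FB), (AC), this forces $\Gamma_{\gamma(\hat{\brho}_\infty)}$ to be a stationary cluster with piecewise constant mean curvatures, i.e.\ a standard double bubble by \cite{HutchingsMorganProofDoubleBubbleConjecture}.

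The main obstacle is the norm-comparison step in the second paragraph --- producing $\|\widetilde{E}'(\hat{\brho})\|_W\lesssim\|\nabla_\Gamma H\|_{L^2}^{1-\eta}$ with $\eta<\theta$. This is precisely the technical difficulty anticipated in Section \ref{SectionTechnicalAspects}: the dissipation sits on the $\mathcal{H}^{-1}/L^2$-scale whereas the LSI built over $V=H^m_{TJ}$ lives on the $H^{m-2}$-scale on the bulk. Making the interpolation quantitative enough to preserve $\eta<\theta$, and carefully tracking how the boundary terms $\mathfrak{b}^2,\mathfrak{b}^3$ behave under the nonlinear (AC) conditions --- which are encoded only implicitly in $\hat{\brho}$ after composition with $F$ and $\gamma$ --- is where the main effort is spent; once this is in place, the remaining \L ojasiewicz bookkeeping and the bootstrap to global existence proceed along well-understood lines.
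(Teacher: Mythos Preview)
Your outline follows essentially the same route as the paper: geometric LSI via Theorem~\ref{TheoremLSITripleJunctions} plus parabolic smoothing (Proposition~\ref{PropositionHigherSpaceRegularitySFDTJ}) to interpolate down to the dissipation norm, then the standard \L ojasiewicz ODE argument, a continuation bootstrap for global existence, and convergence via $L^1$-in-time integrability of the velocity. Your explicit bookkeeping with $\widetilde{\brho}=F(\brho)$ and $\hat{\brho}=\gamma^{-1}(\widetilde{\brho})$ is in fact cleaner than the paper's presentation, which makes the switch to $\bmu_G$ implicitly.

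There is, however, one point that is not merely imprecise but would actually break the argument as you wrote it. You say that the boundary components $\mathfrak{b}^2,\mathfrak{b}^3$ ``vanish at $\hat{\brho}=0$ and so carry a smallness factor of $\hat{\brho}$''. Smallness of order $\|\hat{\brho}\|$ is not enough: that would leave a residual term in the LSI of the form $\psi^{1-\theta}\le C\|\nabla_\Gamma H\|_{L^2}^{1-\eta}+C\|\hat{\brho}\|$, and the second summand does not dissipate and cannot be absorbed. The correct observation (and the one the paper uses in Lemma~\ref{LemmaGeometricLSITJ}) is that $\mathfrak{b}^2,\mathfrak{b}^3$ vanish \emph{identically} along the flow. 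Indeed, the boundary integral in \eqref{EquationFirstDerivativeEtildetriplejunction} equals $\int_{\Sigma_{\gamma(\hat{\brho})}} v\cdot\sum_i\nu^i_{\gamma(\hat{\brho})}\,d\mathcal{H}^{n-1}$ where $v$ is the common displacement of the test direction on $\Sigma_\ast$; since $\Gamma_{\gamma(\hat{\brho}(t))}=\Gamma_{\brho(t)}$ satisfies (AC) by construction of (SDFTJ), one has $\sum_i\gamma^i\nu^i_{\gamma(\hat{\brho})}=0$ and hence $\mathfrak{b}^2=\mathfrak{b}^3\equiv 0$. With this, the $W$-norm of $\widetilde{E}'(\hat{\brho})$ reduces to the bulk $H^{m-2}$-part only, and your interpolation step goes through exactly as planned. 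Once you replace ``smallness factor'' by ``exact vanishing via (AC)'', the rest of your sketch matches the paper's proof.
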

Before we can prove this we need to transform Theorem \ref{TheoremLSITripleJunctions} into a more geometric formulation. This comes in handy as we will need later the gradient flow structure of the surface diffusion flow, which is more natural on a geometric level.
\begin{lemma}[Geometric LSI for the Surface Energy on Triple Junctions]\label{LemmaGeometricLSITJ}\ \\
	Consider for $k>\max(2+\frac{n}{2},5)$ and $k':=\frac{2-\bar{\theta}}{\bar{\theta}}k+2$ with $\bar{\theta}$ from Theorem \ref{TheoremLSITripleJunctions} and any $R>0$ the set
	\begin{align*}
	Z_R:=B_R(0)\subset H^{k'}_{TJ}(\Gammaast).
	\end{align*} 
	Then, there is a $\sigma>0$ and a $C(T)>0$ only depending on $R$ such that for all
	\begin{align}
	\brho\in \left\{H^k_{TJ}(\Gammaast)\cap Z_R\big|\rho^1+\rho^2+\rho^3=0\text{ on }\Sigmaast\right\},
	\end{align}
	such that $\Gamma_{\brho}$ fulfils the volume constraints
	\begin{align}
	\Vol(\Omega_{12}^{\brho})=V^2_{\ast}, \quad \Vol(\Omega_{13}^{\brho})=V^2_{\ast},
	\end{align}
	and the angle conditions $\eqref{EquationSurfaceDiffusionTripleJunctionGeometricVersiononRrenceFrame}_3, \eqref{EquationSurfaceDiffusionTripleJunctionGeometricVersiononRrenceFrame}_4$ with $\theta^i=\frac{2\pi}{3}, i=1,2,3,$, we have that
	\begin{align}\label{EquationHilfsaufssageGeometriceLSITripleJunction}
	\|\widetilde{E}'(\widetilde{\boldsymbol{\rho}})\|_W\le C(X_2)\sum_{i=1}^3 \|\widetilde{E}'(\widetilde{\boldsymbol{\rho}})\|_{L^2(\Gammaast^i)}^{\left(\frac{2-2\bar{\theta}}{2-\bar{\theta}}\right)},
	\end{align} 
	where $\widetilde{\boldsymbol{\rho}}$ is the projection of $\brho$ on $U_1$ induced by the map $\gamma$ from Lemma \ref{LemmaParametrisierungVolumenbedingungTripleJunction}. In particular, we get the following geometric LSI: there is a (possible smaller) $\sigma>0$, $C>0$ such that we have for $\theta:=\frac{\bar{\theta}}{2}$ and all $\brho\in X_1\cap X_2$ that
	\begin{align}
	|E(\Gamma_{\boldsymbol{\rho}})-E(\Gammaast)|^{1-\theta}&\le C(X_2)\sum_{i=1}^3\left(\int_{\Gamma^i_{\boldsymbol{\rho}}}|\nabla_{\boldsymbol{\rho}}H^i_{\boldsymbol{\rho}}|^2\dH^n\right)^{1/2}\notag\\
	&\le C(X_2) \sqrt{ \sum_{i=1}^3\int_{\Gamma^i_{\boldsymbol{\rho}}}|\nabla_{\Gamma^i_{\boldsymbol{\rho}}}H_{\Gamma^i_{\boldsymbol{\rho}}}|^{^{^2}}\dH^n}.\label{EquationGeometricLSITripleJunction}
	\end{align}
\end{lemma}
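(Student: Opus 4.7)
The strategy is to derive (\ref{EquationGeometricLSITripleJunction}) from Theorem~\ref{TheoremLSITripleJunctions} in two stages: a Sobolev interpolation that converts the $W$-norm of $\widetilde{E}'$ into its $L^2$-norm (the content of (\ref{EquationHilfsaufssageGeometriceLSITripleJunction})), followed by an identification of this $L^2$-norm with $\|\nabla H\|_{L^2}$ using the gradient-flow structure.

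I would first use Lemma~\ref{LemmaParametrisierungVolumenbedingungTripleJunction} to write $\brho=\gamma(\widetilde{\brho})$ for some $\widetilde{\brho}\in U_1$, so that the volume constraints become automatic and $E(\Gamma_{\brho})=\widetilde{E}(\widetilde{\brho})$. Theorem~\ref{TheoremLSITripleJunctions} then yields
$$|E(\Gamma_{\brho})-E(\Gammaast)|^{1-\bar\theta}\le C\,\|\widetilde{E}'(\widetilde{\brho})\|_W.$$
Inspecting the explicit form (\ref{EquationEtildestrichalsElementinWTJ}) of $\widetilde{E}'$, the hypothesis $\theta^i=2\pi/3$ together with the angle conditions on $\Gamma_{\brho}$ forces the boundary components $\mathfrak{b}^2,\mathfrak{b}^3$ to vanish, so the $W$-norm reduces to $\|\mathfrak{f}(\widetilde{\brho})\|_{H^{m-2}_{TJ}(\Gammaast)}$.

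Next, since $\widetilde{\brho}\in Z_R\subset H^{k'}_{TJ}(\Gammaast)$ with $\|\widetilde{\brho}\|_{H^{k'}_{TJ}}\le R$, the analyticity and composition results of Lemma~\ref{LemmaAnalyticityofwideEtildeTriplejunction} (extended to the higher Sobolev scale, valid as $k'$ is well above the Banach-algebra threshold) give $\|\mathfrak{f}(\widetilde{\brho})\|_{H^{k-2}_{TJ}}\le C(R)$. Gagliardo--Nirenberg interpolation between $L^2_{TJ}$ and $H^{k-2}_{TJ}$ then produces
$$\|\mathfrak{f}\|_{H^{m-2}_{TJ}}\le C\,\|\mathfrak{f}\|_{L^2_{TJ}}^{1-\lambda}\,\|\mathfrak{f}\|_{H^{k-2}_{TJ}}^{\lambda},\qquad \lambda=\frac{m-2}{k-2},$$
and the explicit choice $k'=\tfrac{2-\bar\theta}{\bar\theta}k+2$ is tailored so that $1-\lambda=(2-2\bar\theta)/(2-\bar\theta)$; the factor $\|\mathfrak{f}\|_{H^{k-2}_{TJ}}^{\lambda}$ is absorbed into the constant $C(R)$. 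This gives (\ref{EquationHilfsaufssageGeometriceLSITripleJunction}). Combining with the analytic LSI and raising to the power $(2-\bar\theta)/(2(1-\bar\theta))$, the exponents collapse to yield $|E(\Gamma_{\brho})-E(\Gammaast)|^{1-\bar\theta/2}\le C(R)\sum_i\|\mathfrak{f}^i\|_{L^2(\Gammaast^i)}$, which is (\ref{EquationGeometricLSITripleJunction}) with $\theta=\bar\theta/2$ up to the remaining geometric identification of the right-hand side.

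The last step, namely bounding $\sum_i\|\mathfrak{f}^i\|_{L^2(\Gammaast^i)}$ by $\sum_i\|\gast H^i_{\brho}\|_{L^2(\Gamma^i_{\brho})}$, is the main obstacle. The difficulty is that $\mathfrak{f}^i = H^i_{\brho}J_i$ plus smaller tangential correction and $\bar\gamma'$-terms, and $\|H^i_{\brho}\|_{L^2}$ is not controlled by $\|\gast H^i_{\brho}\|_{L^2}$ without a mean-value condition on $H^i_{\brho}$. The resolution is that $\widetilde{E}'(\widetilde{\brho})$ is a functional only on $U_1$, so $\mathfrak{f}$ may be replaced by $\mathfrak{f}-c$ for any $c$ in the $L^2$-orthogonal complement of $U_1$ without changing the induced norm; this complement is exactly two-dimensional, matching the two integral-equality conditions in (\ref{EquationMeanValueIntegralCondition}) and providing precisely the freedom needed to place $\mathfrak{f}-c$ into a perturbation of the Poincaré space $\mathcal{E}$ of Proposition~\ref{LemmaPoincareineuqlaityfortriplejunction}. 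The Poincaré-type inequality there, combined with the closeness of $\Gamma_{\brho}$ and $\Gammaast$ (so Jacobians and pullbacks are uniformly bounded) and the smallness of the $\bar\gamma'$ corrections at $\widetilde{\brho}$ near $0$, yields the required bound. The delicate point is that without (CCP) the mean curvature $H_{\brho}$ does not lie in $\mathcal{E}$ directly, and one must exploit the duality action on $U_1$ together with the explicit structure of $\gamma$ to construct a valid constant shift.
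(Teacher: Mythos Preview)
Your approach is essentially the same as the paper's: both argue by (i) using the angle conditions to annihilate the boundary components $\mathfrak{b}^2,\mathfrak{b}^3$ of $\widetilde{E}'$, (ii) interpolating the remaining $H^{m-2}$-part between $L^2$ and a high Sobolev space where one has the uniform bound $C(R)$, (iii) combining with the analytic LSI and manipulating exponents (the paper does this via Jensen's inequality, you via raising to a power, which amounts to the same thing), and (iv) invoking the Poincar\'e inequality of Proposition~\ref{LemmaPoincareineuqlaityfortriplejunction} to pass from $\|\mathfrak{f}\|_{L^2}$ to $\|\nabla_{\brho} H_{\brho}\|_{L^2}$. Your discussion of step (iv), in particular the freedom to subtract elements in the $L^2$-complement of $U_1$, is in fact more explicit than the paper's, which simply cites the Poincar\'e inequality on $V$ with $m=1$ and refers to \cite[Lemma 5.11]{goesswein2019Dissertation} for the remaining details.

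One bookkeeping point to correct: in your interpolation you write the upper endpoint as $H^{k-2}_{TJ}$ and set $\lambda=(m-2)/(k-2)$, but in the lemma the analytic LSI is applied with $m=k$, so this interpolation would be trivial. The paper instead interpolates $H^{k-2}$ between $L^2$ and $H^{k'}$, using the uniform bound $\|\widetilde{\brho}\|_{H^{k'}}\le R$ coming from the hypothesis $\widetilde{\brho}\in Z_R$ (which, via the mapping properties of $\mathfrak{f}$, controls $\|\mathfrak{f}\|_{H^{k'-2}}$); the definition $k'=\tfrac{2-\bar\theta}{\bar\theta}k+2$ is designed precisely so that this interpolation yields the exponent $(2-2\bar\theta)/(2-\bar\theta)$. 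With this correction your argument matches the paper's.
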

\begin{proof}
	We first observe that the angle conditions guarantee that the $H^{k-\frac{3}{2}}(\Sigmaast)^2$-part in the $W$-norm of $\widetilde{E}'(\boldsymbol{\rho})$ vanishes. On each of the remaining three terms we use that $W$ is an interpolation space between $L^2$ and $H^{k'}$ together with our uniform bounds for the $H^{k'}$-norm from our parabolic smoothing result Proposition \ref{PropositionHigherSpaceRegularitySFDTJ} to get (\ref{EquationHilfsaufssageGeometriceLSITripleJunction}). We apply this now on \eqref{EquationLSITripleJunctionAnalytic} to get
	\begin{align}
	|E(\Gamma_{\boldsymbol{\rho}})-E(\Gammaast)|^{1-\theta}\le C(R)\left(\sum_{i=1}^3\|\widetilde{E}'(\widetilde{\boldsymbol{\rho}})\|_{L^2(\Gammaast^i)}^{\left(\frac{2-2\bar{\theta}}{2-\bar{\theta}}\right)} \right)^{\frac{2-\bar{\theta}}{2-2\bar{\theta}}}\le C(R)\sum_{i=1}^3 \|\widetilde{E}'(\widetilde{\boldsymbol{\rho}})\|_{L^2(\Gammaast^i)},
	\end{align}
	where we used Jensen's inequality in the last step. Now it remains to study the $L^2$-norm on the right-hand-side. As we have a Poincar\'e inequality on the space $V$ from (\ref{EquationSettingLSITriplejunctionSpaceV}) with $m=1$, we can control these terms by
	\begin{align}
	C(X_2)\sum_{i=1}^3\int_{\Gammaast^i}|\nabla_{\boldsymbol{\rho}}\mathfrak{f}^i|^2\dH^n,
	\end{align}
	with
	\begin{align}
	\mathfrak{f}^i:=\left(H^i_{\gamma(\boldsymbol{\rho}_0)}\left(N^i_{\ast}+\bar{\gamma}'(\boldsymbol{\rho}_0)^iN^i_{\ast}\right)\cdot N^i_{\gamma(\boldsymbol{\rho}_0)}+\frac{1}{\sqrt{3}}\sum_{j=1}^2(-1)^j(H^{i+j}_{\grhonull}\tau_{\ast}^{i+j}\cdot N^{i+j}_{\grhonull})\right)J_{\Gamma^i_{\grhonull}}.
	\end{align} 
	This can be taken care of similarly to the argument in \cite[Lemma 5.11, (5.31)f.]{goesswein2019Dissertation} and leads to the first line in (\ref{EquationGeometricLSITripleJunction}). For the second line we apply equivalence of norms on the vector $$\boldsymbol{x}:=\left(\sqrt{\int_{\Gamma^i(t)}|\nabla_{\Gamma^i(t)}H_{\Gamma^i(t)}|^2\dH^n}\right)_{i=1,2,3}$$ in $\R^3$.
\end{proof}
As before we can now use this geometric version to prove stability. In the following, $k, k', \sigma$ are chosen as in Lemma \ref{LemmaGeometricLSITJ} and $\varepsilon_0$ and $T$ are the bounds for the initial data and the existence time from Theorem \ref{TheoremSTETripleJunctions}. We consider for $\boldsymbol{\rho}_0\in C^{4+\alpha}_{TJ}(\Gammaast)$, that fulfills the compatibility conditions \eqref{EquationGeometricCompabilityConditionforSDFTJ} and $\|\brho_0\|\le\varepsilon_0$, the solution from Theorem \ref{TheoremSTETripleJunctions}. By choosing $\boldsymbol{\rho}_0$ sufficiently small we can guarantee due to Proposition \ref{PropositionHigherSpaceRegularitySFDTJ} that the solution $\brho$ fulfills for all  $t\in\left[\frac{T_0}{2},T\right]$ 
\begin{align}\label{EquationCOnditionWidetildeTTripleJunction}
\|\brho(t)\|_{C^k_{TJ}(\Gammaast)}\le Z:=\min(\sigma,\varepsilon_0).
\end{align} 
Thus, we can apply (\ref{EquationGeometricLSITripleJunction}) on this set. We now consider the largest $\widetilde{T}$ such that \eqref{EquationCOnditionWidetildeTTripleJunction} is fulfilled on $\widetilde{I}:=[\frac{T}{2},\widetilde{T}]$. Note that due to Theorem \ref{TheoremSTETripleJunctions} the solution $\brho$ exists on this interval. Then, for all $t\in \widetilde{I}$ we have that
\begin{align}
-\frac{d}{dt}(E(\Gamma(t))-E(\Gammaast))^{\theta}&=-\theta(E(\Gamma(t))-E(\Gammaast))^{\theta-1}\sum_{i=1}^3\int_{\Gamma(t)^i}-V_{\Gamma^i(t)}H_{\Gamma^i(t)}\dH^n\notag\\
&=\theta(E(\Gamma(t))-E(\Gammaast))^{\theta-1}\sum_{i=1}^3\int_{\Gamma^i(t)}\left(-\Delta_{\Gamma^i(t)}H_{\Gamma^i(t)}\right)H_{\Gamma^i(t)}\dH^n\notag\\
&=\theta(E(\Gamma(t))-E(\Gammaast))^{\theta-1}\sum_{i=1}^3\int_{\Gamma^i(t)}|\nabla_{\Gamma^i(t)}H_{\Gamma^i(t)}|^2\dH^n\notag\\
&+\int_{\Sigma(t)}\sum_{i=1}^3(\nabla_{\Gamma^i(t)}H_{\Gamma^i(t)}\cdot \nu_{\Gamma^i(t)})H_{\Gamma^i(t)}\dH^{n-1}\notag \\
&\ge C\sqrt{ \sum_{i=1}^3\int_{\Gamma^i(t)}|\nabla_{\Gamma^i(t)}H_{\Gamma^i(t)}|^{^{^2}}\dH^n}\label{EquationStandardEnergyArguemtnLSITripleJunctionI}\\
&= C\|-\Delta_{\Gamma(t)}H_{\Gamma(t)}\|_{\mathcal{H}^{-1}(\Gamma(t))}\notag\\
&\ge C\|-\Delta_{\Gamma(t)}H_{\Gamma(t)}\|_{H^{-1}_{TJ}(\Gamma(t))}\notag\\
&\ge C\|-\Delta_{\Gamma(t)}H_{\Gamma(t)}\|_{H^{-1}_{TJ}(\Gammaast)}\notag\\
&=C\|V_{\Gamma(t)}\|_{H^{-1}_{TJ}(\Gammaast)}\notag\\
&=C\|\partial_t \boldsymbol{\rho}(t)(N_{\boldsymbol{\rho}(t)}\cdot N_{\ast})+\partial_t \boldsymbol{\mu}(t)(N_{\boldsymbol{\rho}(t)}\cdot \tau_{\ast})\|_{H^{-1}_{TJ}(\Gammaast)}\notag\\
&\ge C\Big|\|\partial_t \boldsymbol{\rho}(t)(N_{\boldsymbol{\rho}(t)}\cdot N_{\ast})\|_{H^{-1}_{TJ}(\Gammaast)}-\|\partial_t \boldsymbol{\mu}(t)(N_{\boldsymbol{\rho}(t)}\cdot \tau_{\ast}) \|_{H^{-1}_{TJ}(\Gammaast)}\big|\notag \\
&\ge C\|\partial_t\boldsymbol{\rho}(t)\|_{H^{-1}_{TJ}(\Gammaast)}. \notag
\end{align}  
Hereby, we used in the fifth line the boundary conditions for (SDFTJ) and \eqref{EquationGeometricLSITripleJunction}, in the seventh line equivalence of the $\mathcal{H}^{-1}$ and the $H^{-1}$-products on $\mathcal{E}$, in the eight line the uniform bound \eqref{EquationCOnditionWidetildeTTripleJunction} for $\brho$ and in the eleventh line the inverse triangle inequality.  The last inequality we want to explain in more details. First, we have for any $\boldsymbol{\psi}\in H^1_{TJ}(\Gammaast)$, that
\begin{align}\label{EquationControllingMuI}
\sum_{i=1}^3\int_{\Gammaast^i}\partial_t\rho^i(N_{\brho(t)}^i\cdot \Nast^i)\psi^i\dH^n\ge \min_{\Gammaast}(N_{\brho(t)}\cdot\Nast)\sum_{i=1}^3\int_{\Gammaast^i}\partial_t\rho^i\psi^i\dH^n\ge \frac{1}{2}\sum_{i=1}^3\int_{\Gammaast^i}\partial_t\rho^i\psi^i\dH^n.
\end{align}
Here, we used the fact that by choosing $Z$ in \eqref{EquationCOnditionWidetildeTTripleJunction} small enough we can guarantee that we have $N_{\brho(t)}\cdot N_{\ast}\ge \frac{1}{2}$, using continuous dependence of the normal field on the $C^1$-norm. Consequently, we derive from \eqref{EquationControllingMuI} that
\begin{align}\label{EquationKillingMUI}
\|\partial_t\brho(t)(N_{\brho(t)}\cdot N_{\ast})\|_{H^{-1}_{TJ}(\Gammaast)}\ge \frac{1}{2}\|\partial_t\brho(t)\|_{H^{-1}_{TJ}(\Gammaast)}.
\end{align}
On the other hand, we have for $\boldsymbol{\psi}\in H^1_{TJ}(\Gammaast)$ that
\begin{align}
\sum_{i=1}^3\int_{\Gammaast^i}\partial_t\mu^i(N_{\brho(t)}^i\cdot\tauast^i)\psi^i\dH^n&=\sum_{i=1}^3\int_{\Gammaast^i}\sum_{j=1}^3\left(\partial_t\rho^i\circ\varphi_{ji}\right)(N_{\brho(t)}^i\cdot\tauast^i)\psi^i\dH^n \notag\\&\le C\max_{\Gammaast}(N_{\brho(t)}\cdot\tauast)\sum_{i=1}^3\int_{\Gammaast^i}\partial_t\rho^i\psi^i\dH^n \label{EquationProblemMuII}\\ \notag
&\le \frac{1}{4}\sum_{i=1}^3\int_{\Gammaast^i}\partial_t\rho^i\psi^i\dH^n.
\end{align}
Here, we denote by $\varphi_{ij}$ the identification of $\Gamma^j$ with $\Gamma^i$ given by \eqref{EquationIdentificationoftheGammai}. We used in the second line the transformation formula and in the third step once again that for $\brho$ small enough in the $C^1$-norm we can guarantee that $N_{\brho(t)}\cdot \tau_{\ast}\le \frac{1}{4C}$.
So, we conclude from \eqref{EquationProblemMuII} that
\begin{align}
\|\partial_t \boldsymbol{\mu}(t)(N_{\boldsymbol{\rho}(t)}\cdot \tau_{\ast}) \|_{H^{-1}_{TJ}(\Gammaast)}\le \frac{1}{4} \|\partial_t\brho(t)\|_{H^{-1}_{TJ}(\Gammaast)}. \label{EquationKillingMu}
\end{align}
Combining \eqref{EquationKillingMUI} and \eqref{EquationKillingMu} we get
\begin{align}
\Big|\|\partial_t \boldsymbol{\rho}(t)(N_{\boldsymbol{\rho}(t)}\cdot N_{\ast})\|_{H^{-1}_{TJ}(\Gammaast)}-\|\partial_t \boldsymbol{\mu}(t)(N_{\boldsymbol{\rho}(t)}\cdot \tau_{\ast}) \|_{H^{-1}_{TJ}(\Gammaast)}\big|\ge \frac{1}{4}\|\partial_t\brho(t)\|_{H^{-1}_{TJ}(\Gammaast)}.
\end{align}
This shows the last inequality in \eqref{EquationStandardEnergyArguemtnLSITripleJunctionI}. In total, we get that
\begin{align}\label{EquationStandardEnergyArguemtnLSITripleJunctionII}
\|\partial_t\boldsymbol{\rho}(t)\|_{H^{-1}_{TJ}(\Gammaast)}\le -C\frac{d}{dt}(E(\Gamma(t))-E(\Gammaast))^{\theta}.
\end{align}
Integrating this in time we get for all $t\in \widetilde{I}$ that
\begin{align}\notag
\|\boldsymbol{\rho}(t)\|_{H^{-1}_{TJ}(\Gammaast)}&\le \|\boldsymbol{\rho}(t)-\boldsymbol{\rho}(T_0)\|_{H^{-1}_{TJ}(\Gammaast)}+\|\boldsymbol{\rho}(T_0)\|_{H^{-1}_{TJ}(\Gammaast)}\notag\\
&\le \int_{T_0}^t\|\partial_t \boldsymbol{\rho}\|_{H^{-1}_{TJ}(\Gammaast)}ds+\|\boldsymbol{\rho}(T_0)\|_{H^{-1}_{TJ}(\Gammaast)}\notag\\
&\le-C(E(\Gamma(t))-E(\Gammaast)^{\theta}+C(E(\Gamma(T_0)))-E(\Gammaast))^{\theta}+\|\boldsymbol{\rho}(T_0)\|_{C^{\theta}_{TJ}(\Gammaast)}\label{EquationL2Boundpartialt}\\
&\le C(E(\Gamma(T_0))-E(\Gammaast))^{\theta}+C\|\boldsymbol{\rho}(T_0)\|_{C^{0}_{TJ}(\Gammaast)}^{\theta}\notag\\
&\le C\|\brho(T_0)\|_{C^{2+\alpha}_{TJ}(\Gammaast)}^{\theta}+C\|\brho(T_0)\|_{C^{2+\alpha}_{TJ}(\Gammaast)}^{\theta}\notag \\
&\le C\|\brho_0\|_{C^{2+\alpha}_{TJ}(\Gammaast)}^{\theta}.\notag
\end{align}
Hereby, we used in fourth step interpolation result for H\"older-spaces for the second summand, in the fifth step Lipschitz continuity of the surface area in the $C^1$-norm of the height function and in the last step the bound $R$ for the norm of $\brho$ from Theorem \ref{TheoremSTETripleJunctions}. From this, we get with the interpolation result between $H^{-1}_{TJ}$ and $H^1_{TJ}$ and our smoothing estimate \eqref{EquationEstimateHigherRegularitybrhoTJ} that
\begin{align}\label{EquationWieSollichdichnennen}
\|\brho(t)\|_{L^2_{TJ}(\Gammaast)}\le \|\brho(t)\|_{H^{-1}_{TJ}(\Gammaast)}^{\frac{1}{2}}\|\brho(t)\|_{H^1_{TJ}(\Gammaast)}^{\frac{1}{2}}\le C\|\brho_0\|_{C^{2+\alpha}_{TJ}(\Gammaast)}^{\frac{\theta}{2}}.
\end{align}
Finally, from this we derive for every $t\in\widetilde{I}$ and some $\beta\in(0,1)$ that
\begin{align}\label{EquationOhGoot}
\|\boldsymbol{\rho}(t)\|_{C^k_{TJ}(\Gammaast)}\le  C\|\boldsymbol{\rho}(t)\|_{C^{k'}_{TJ}(\Gammaast)}^{1-\beta}\|\boldsymbol{\rho}(t)\|_{L^2_{TJ}(\Gammaast)}^{\beta}\le C\|\boldsymbol{\rho}_0\|_{C^{2+\alpha}_{TJ}(\Gammaast)}^{\frac{\beta\theta}{2}}\le C\varepsilon^{\frac{\beta\theta}{2}}. 
\end{align}
Here, we used in the first step interpolation results for Besov spaces, in the next step (\ref{EquationL2Boundpartialt}) and Proposition \ref{PropositionHigherSpaceRegularitySFDTJ} and finally the bounds for the initial data. By choosing 
\begin{align}
\varepsilon\le e^{\frac{2}{\beta\theta}\ln\left(\frac{Z}{2C}\right)},
\end{align} 
we get for all $t\in\widetilde{I}$ that
\begin{align}\label{EquationAbschatzungBliBlaBlub}
\|\brho(t)\|_{C^k_{TJ}(\Gammaast)}\le \frac{Z}{2}.
\end{align}
Applying close to $\widetilde{T}$ our short time existence Theorem \ref{TheoremSTETripleJunctions} and using again Proposition \ref{PropositionHigherSpaceRegularitySFDTJ} we see that $\widetilde{T}$ cannot be maximal such that (\ref{EquationCOnditionWidetildeTTripleJunction}) is fulfilled. This shows now that $\widetilde{T}=\infty$ and consequently global existence of $\brho$.\\
Next, we want to see that $\brho$ not only exists globally in time but also converges for $t\to\infty$ to an energy minimizer. First, we observe that as $E(\Gamma(t))-E(\Gammaast)$ is strictly decreasing while bounded below by $0$, the right hand side in \eqref{EquationStandardEnergyArguemtnLSITripleJunctionII} is in $L^1(\R^+, \R)$. Consequently, we see that $\partial_t\brho\in L^1(\R^+,H^{-1}_{TJ}(\Gammaast))$ and therefore there is $\brho_{\infty}\in H^{-1}_{TJ}(\Gammaast)$ with $\brho(t)\to\brho_{\infty}$ for $t\to\infty$. Using the same interpolation arguments as in \eqref{EquationWieSollichdichnennen} and \eqref{EquationOhGoot} we see that the convergence holds also in $C^{k}_{TJ}(\Gammaast)$
for any $k>0$. Thus, the limit $\brho_{\infty}$ is smooth enough such that we can apply the LSI in the corresponding limit surface $\Gamma_{\infty}$, deriving that
\begin{align*}
|E(\Gamma_{\infty})-E(\Gammaast)|^{\theta}\le C\|-\Delta_{\Gamma_{\infty}}H_{\Gamma_{\infty}}\|_{\mathcal{H}^{-1}(\Gamma_{\infty})}=0.
\end{align*}
In the last step we used that $\partial_t\brho(t)\in L^1(\R^+, H^{-1}_{TJ}(\Gammaast))$, which is only possible for $\|\partial_t\brho\|_{H^{-1}(\Gammaast)}\to 0$. This shows now that $\Gamma_{\infty}$ is an energy minimizer, i.e., a standard double bubble.\\
It remains to prove stability in the sense of Lyapunov. So, let $\varepsilon>0$. To control $\brho(t)$ we have to differ two cases. Either, we have $t\in[0,T_0]$. On this interval, we can use the estimates from the analysis for the short time existence, c.f. \cite[Section 6,(107)]{garckegoesweinpreprintshorttimeexistenceSDFTJ}, to see that for $\brho_0$ and $T_0$ small enough we have that $\|\brho(t)\|_{C^{4+\alpha}_{TJ}(\Gammaast)}\le \varepsilon$ on $[0, T_0]$. On the interval $[T_0,\infty)$ we can use the estimate \eqref{EquationAbschatzungBliBlaBlub}. Observe that it will stay true for any $0<C\le Z$ as long as we can guarantee that $\|\brho(T_0)\|_{C^{k}_{TJ}(\Gammaast)}\le C$. Using parabolic smoothing of the flow this will be fulfilled after choosing $\brho_0$ possibly smaller. So together we see that provided $\brho_0$ is small enough in the $C^{4+\alpha}$-norm, we will have that $\|\brho(t)\|_{C^{4+\alpha}_{TJ}(\Gammaast)}\le \varepsilon$ on $\R^+$. This implies stability in the sense of Lyapunov and finishes the proof of Theorem \ref{TheoremStabilityofStationarydoublebubbles}.

\section*{Acknowledgement}
The second author was partially funded by the DFG
through the Research Training Group GRK 1692 \textit{Curvature, Cycles, and Cohomology} in Regensburg. The support is gratefully acknowledged.
\bibliographystyle{acm}
\bibliography{BibFileSTESurfDiffTripJunc} 

\end{document}